\newcommand{\C} {\mathbb{C}}
\newcommand{\Q} {\mathbb{Q}}
\newcommand{\N}  {\mathbb{N}}
\newcommand{\R} {\mathbb{R}}
\newcommand{\F}{\mathbb{F}}
\newcommand{\Z}{\mathbb{Z}}
\newcommand{\p}{\mathfrak{p}}
\newcommand{\OO}{\mathcal{O}}
\newcommand{\PP}{\mathbb{P}}
\newcommand{\id}{\mathop{\rm id}}
\newcommand{\rank}{\mathop{\rm rank}}
\newcommand{\Pic}{\mathop{\rm Pic}}
\newcommand{\NS}{\mathop{\rm NS}}
\newcommand{\MWL}{\mathop{\rm MWL}}
\newcommand{\disc}{\mathop{\rm disc}}
\newcommand{\A}{\mathfrak{A}}
\newcommand{\D}{\mathfrak{D}}
\newcommand{\Aut}{\mathop{\rm Aut}}
\newcommand{\Gal}{\mathop{\rm Gal}}
\newcommand{\mX}{\mathcal X}
\newcommand{\mS}{\mathcal S}
\newcommand{\mY}{\mathcal Y}
\newcommand{\het}[1]{H_\text{\'et}^2(#1,\Q_\ell)}
\newcommand{\tr}{\mathop {\rm tr}}
\newcommand{\Fr}{\mathop {\rm Frob}}
\newcommand{\Hp}{H^2_\text{prim}(S_m)}
\newtheorem{Theorem}{Theorem}[section]
\newtheorem{Proposition}[Theorem]{Proposition}
\newtheorem{Lemma}[Theorem]{Lemma}
\newtheorem{Corollary}[Theorem]{Corollary}
\theoremstyle{remark}
\newtheorem{Remark}[Theorem]{Remark}
\newtheorem{Technique}[Theorem]{Technique}
\newtheorem{Example}[Theorem]{Example}
\theoremstyle{definition}
\newtheorem{Problem}[Theorem]{Problem}
\numberwithin{equation}{section}
\begin{document}

\title{Picard numbers of quintic surfaces}

%\title[Picard numbers of quintic s via arithmetic deformations]{Picard numbers of quintic surfaces via arithmetic deformations}

%\dedicatory{Dedicated to Wolf Barth}

\author{Matthias Sch\"utt}
\address{Institut f\"ur Algebraische Geometrie, Leibniz Universit\"at
  Hannover, Welfengarten 1, 30167 Hannover, Germany}
\email{schuett@math.uni-hannover.de}
\urladdr{http://www.iag.uni-hannover.de/schuett/}

\subjclass[2010]{14J29; 14G10, 14J27, 14J28} %, 14J50}
\keywords{Picard number, quintic surface, arithmetic deformation, Delsarte surface, K3 surface}
\thanks{Funding   by ERC StG~279723 (SURFARI)
 is gratefully acknowledged}

\date{September 21, 2014}

 \begin{abstract}
We solve the Picard number problem for complex quintic surfaces
by proving that every number between 1 and 45 occurs as Picard number of a quintic surface over $\Q$.
Our main technique consists in arithmetic deformations of Delsarte surfaces,
but we also use K3 surfaces and wild automorphisms.
 \end{abstract}
 
 \maketitle

%\section{Introduction}

%\begin{eqnarray}
%\label{eq:a}
%a^4-a^3+1=0
%\end{eqnarray}
%

\section{Introduction}

The Picard number of an algebraic surfaces roughly measures the quantity of curves on the surface,
or rather how complicated these curves become.
Arguably it is the most important invariant of an algebraic surface that is not preserved under deformations.
Yet the Picard number is notoriously hard to compute 
despite recent progress in several directions (see Section \ref{s:pic}).
This brings us to the following fundamental problem:

\begin{Problem}
\label{prob}
Which Picard numbers occur in a given deformation class of algebraic surfaces?
\end{Problem}

Here we will mostly be concerned with complex algebraic surfaces,
but Problem \ref{prob} makes perfect sense over any field,
and in particular over $\Q$.
In fact, we will always try to exhibit surface models over $\Q$,
especially for the reason that we want to make  use of reductions modulo primes
while circumventing ramification (see Theorem \ref{thm:prim}).

The answer to Problem \ref{prob} is known for several classes of complex surfaces,
for instance, for abelian surfaces, K3 surfaces, elliptic surfaces (over $\PP^1$  with section),
and generally for surfaces with vanishing geometric genus.
This paper addresses the first unknown case,
namely quintic surfaces in $\PP^3$.
For deformation reasons, we are led to allow the quintics to admit isolated 
rational double point singularities; in this case,
we consider the minimal desingularisation instead.
Our main result is the following.

\begin{Theorem}
\label{thm}
Complex quintic surfaces attain all possible Picard numbers,
i.e. from $1$ to $h^{1,1}=45$.
%except possibly for $6$.
%In particular, any odd Picard number in this range is attained.
\end{Theorem}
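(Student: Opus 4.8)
The plan is to cover the interval $[1,45]$ by a mixture of explicit surfaces and deformation arguments, always exhibiting quintic models over $\Q$ (allowing rational double points and passing to the minimal resolution), and always proving a Picard number by sandwiching it: a lower bound from visible divisor classes, and an upper bound from reduction modulo a prime of good reduction via Theorem~\ref{thm:prim} together with a count of the Frobenius eigenvalues on $H^2$ that are $p$ times a root of unity.

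For the upper part of the range I would use explicit quintics whose cohomology is accessible. A Delsarte quintic --- one cut out by a sum of four monomials in $\PP^3$ --- is dominated by a Fermat surface, so by Shioda's method its second cohomology splits into one-dimensional character eigenspaces, the Lefschetz ones being algebraic; this computes $\rho$ exactly (the Fermat quintic itself has $\rho=37$) and describes the Galois action on the complement via explicit Jacobi sums. Combining a menu of such surfaces with quintics carrying prescribed rational double points --- whose minimal resolutions contribute the associated $(-2)$-curves --- moves $\rho$ through a range of large values, and for the very top one wants a maximally special quintic, whose Picard number one can certify by matching its transcendental lattice, as a Galois module, with that of a K3 surface of known small Picard number.

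The bulk of the work, covering the intermediate and small values, is the arithmetic deformation of Delsarte surfaces. I would fix a Delsarte quintic $X_0$ over $\Q$ and embed it into a family $\{X_t\}$ over $\Q$ --- typically by letting some monomials, or the prescribed singularities, vary --- chosen so that only $k$ of the algebraic classes on $X_0$ remain monodromy-invariant, those $k$ being visibly defined over $\Q$ on every fibre; this gives $\rho(X_{t,\bar\Q})\ge k$ for all $t$. For the matching upper bound one needs a suitable $\Q$-rational fibre $X_t$ with geometric Picard number at most $k$, and here the point of anchoring at a Delsarte surface is that its zeta function is computable in closed form, so that the relevant factor of the zeta function of $X_t$ --- the part not forced to be algebraic --- can be controlled under the deformation, and at a well-chosen prime of good reduction the eigenvalue count for $X_t\bmod p$ comes out to exactly $k$. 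When a single prime leaves a gap of one or two, caused by the usual parity obstruction over finite fields, one closes it by varying $X_t$ among nearby $\Q$-points or by playing off two primes with transcendental lattices in different square classes; this should reach every value down to $\rho=1$.

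The main obstacle, as in every Picard number computation, is the upper bound: ruling out divisor classes on a given quintic over $\Q$ beyond the ones one has written down. The Delsarte anchor is exactly what turns this from hopeless into feasible, but controlling the zeta function of a \emph{genuinely} non-Delsarte quintic is delicate and is the technical heart of the argument. I expect a handful of values to resist the uniform treatment and to need the two auxiliary inputs advertised --- quintics birational to or dominated by K3 surfaces, for which all Picard numbers $1,\dots,20$ are realised over $\Q$, and reductions in small characteristic carrying a wild automorphism of order $p$, whose action on cohomology sharpens the Frobenius eigenvalue count precisely where the generic estimate is ambiguous. The remaining labour of the proof is then the bookkeeping verifying that these families and special surfaces together leave no gap in $[1,45]$.
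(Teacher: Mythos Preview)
Your architecture is right --- Delsarte anchors, arithmetic deformations for most values, K3 quotients and wild automorphisms for the stubborn ones --- but the mechanism you propose for the upper bound is not the one that works, and this is not a detail. You want to control the zeta function of the deformed surface $X_t$ and count Frobenius eigenvalues that are $p$ times a root of unity. For a quintic with $b_2=53$ that is computationally out of reach (it would need point counts to $\F_{p^{22}}$), and worse, the eigenvalue count always has the parity of $b_2$, so it can never directly certify an even Picard number on a quintic. Your proposed patch --- two primes with incompatible discriminants, \`a la van Luijk --- is exactly what the paper is set up to avoid.

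The paper's mechanism is different and constructive. The deformation $\mS$ is chosen $p$-adically so that $\mS_p = S_p$ \emph{as surfaces}; nothing is counted, because $\rho(S_p)=\rho(S)$ is already known from the Delsarte combinatorics whenever $p\equiv 1\bmod m$ (Lemma~\ref{lem:del}). This gives only $\rho(\mS)\le\rho(S)$, not $\rho(\mS)\le k$. The drop from $\rho(S)$ to $k$ comes from showing that specific curves on $S_p$ --- lines, conics, exceptional components --- do \emph{not} lift to $\mS$; primitivity of specialization (Theorem~\ref{thm:prim}) then forces the strict inequality. The engine you are missing is Proposition~\ref{prop:lift}: on a quintic, an effective divisor class of degree $\le 2$ lifts only if the curve itself lifts, so non-lifting is checked by elementary plane geometry (e.g.\ Corollary~\ref{cor:2lines}: two lines lift iff the residual conic is reducible). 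Galois-module and automorphism-module arguments (Techniques~\ref{tech-2}, \ref{tech-3}) then let the Picard number drop by several from a single prime. Point counting enters only for $\rho\in\{6,7,8\}$ via the wild automorphism and in Section~\ref{s:other}.

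One further correction: for $\rho\in\{30,40,42,44\}$ the paper does not deform a Delsarte surface downward. It deforms \emph{upward}, specializing within families carrying involutions so that $T(\mS)$ splits into pieces governed by K3 quotients or products of curves, and then chooses parameters forcing extra algebraic cycles. At those ranks the Delsarte surfaces simply do not carry enough divisors one could preserve through a downward deformation.
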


More precisely, we will prove
that each Picard number is realised by a quintic over $\Q$.
One might wonder whether an analytic approach using higher Noether-Lefschetz loci
in the moduli space would yield the existence result of Theorem \ref{thm}.
For K3 surfaces, for instance, the problem can be solved over $\C$ by translating it into lattice theory (see \ref{ss:K3}).
For surfaces of general type, however, it seems non-trivial to decide
whether the higher Noether-Lefschetz loci are non-empty, especially for high Picard numbers,
and even if so, whether there exists a closed point in the moduli space of the expected Picard number.

The proof of Theorem \ref{thm} rests on the results on Picard numbers of Delsarte surfaces from \cite{S-quintic}.
We introduce a novel technique based on arithmetic deformations of Delsarte surfaces
which is developed in detail in Section \ref{s:af} (see Technique \ref{tech-1}).
Combined with Galois actions and automorphisms (Techniques \ref{tech-2}, \ref{tech-3}),
this will enable us to exhibit explicit quintics with given Picard number 
for almost every value given in Theorem \ref{thm}.
There are only 4 Picard numbers for which we rely on different methods, see Section \ref{s:other}.
As indicated above, all of these quintics have a model over $\Q$. %, or in one case over $\Q(\sqrt{5})$.

For reference, we highlight a distinct feature which we find special about the arithmetic deformation technique
and our explicit examples:

\begin{itemize}
\item
For small Picard number, it is usually feasible to find candidate surfaces,
but rather non-trivial to verify that the Picard number is as expected.
\item
For large Picard numbers, it is often hard to find candidate surfaces in the first instance
while the verification of the Picard number tends to be easier.
\end{itemize}

Both problems are solved  for quintic surfaces in the course of the proof of Theorem \ref{thm}.

The arithmetic deformation technique itself may also be applied to other classes of surfaces.
As an illustration, we will derive in a systematical way a plentitude 
of K3 surfaces of Picard number $1$, see \ref{ss:K3-1}.
In fact, K3 surfaces also enter the proof of Theorem \ref{thm}
at several steps, often critically,
once even combined with the action of wild automorphisms following \cite{S-wild}
(see Section \ref{s:6-8}).

The paper is organised as follows.
In the next section we review what seems to be known about Picard numbers of algebraic surfaces. 
Section \ref{s:del} is devoted to Delsarte surfaces and their Picard numbers.
Arithmetic deformations are developed in Section \ref{s:af}.
As a first application, we exhibit a plentitude of K3 surfaces of Picard number $1$
(joint with Ronald van Luijk).
For quintic surfaces, we work out obstructions against lifting of divisor classes in Section \ref{s:lift}.
The subsequent sections present the quintics needed to prove Theorem \ref{thm}.

%\tableofcontents

\section{Picard numbers of algebraic surfaces}
\label{s:pic}

Throughout this paper, an algebraic surface $S$ means
a minimal smooth projective surface (thought geometrically, i.e.~over an algebraically closed field).
In particular, if we are given a birational model with isolated singularities,
then the corresponding surface $S$ is understood to be the minimal desingularisation.

To understand the inner structure of an algebraic surface $S$,
and in particular its cohomology,
it is indispensable to understand the curves contained in $S$,
and the divisors formed by them.
Following the case of algebraic curves,
one is led to introduce the notion of linear equivalence $\sim$ on the formal group $\mbox{Div}(S)$ of divisors on $S$.
The quotient is the \emph{Picard group}
\[
\Pic(S) = \mbox{Div}(S)/\sim.
\]
If $S$ does not have irregularity $q(S)=0$,
then $\Pic(S)$ contains a continuous part
accounting for divisors moving in families on $S$.
In any case,
this can be killed by algebraic equivalence $\approx$,
reducing, so to say, to the discrete part of $\Pic(S)$,
the so-called \emph{N\'eron-Severi group}
\[
\NS(S) = \mbox{Div}(S)/\approx.
\]
By the theorem of the base,
$\NS(S)$ always is a finitely generated abelian group.
Its rank is called the \emph{Picard number} and denoted by $\rho(S)$:
\[
\rho(S) = \mbox{rank} \NS(S).
\]
{\bf Disclaimer:}
Throughout this paper, we are only concerned with geometric Picard numbers.
That is, even if the surface $S$ is given by equations over some non-closed field $k$,
we compute $\rho(S)$ for the base extension of $S$ to an algebraic closure of $k$.

\subsection{}

In practice, in order to work out the Picard number,
it sometimes suffices to compute intersection numbers of divisors,
since the corresponding notion of numerical equivalence $\equiv$
only kills the torsion in $\NS(S)$.
That is, if we manage to find divisors on $S$
whose intersection matrix has rank $r$,
then we infer that $\rho(S)\geq r$.
Implicitly, we see here that $\NS(S)$ is equipped with a quadratic form
which is compatible with cup-product on $H^2(S)$ via the cycle class map, see \ref{ss:2.2}.
By the Hodge index theorem, 
this quadratic form is non-degenerate of signature $(1,\rho(S)-1)$ on $\NS(S)\otimes\R$.

We remark that for many important classes of algebraic surfaces,
such as smooth surfaces in $\PP^3$ 
(or if there are only isolated rational double points as singularities, the minimal resolution), 
K3 surfaces or elliptic surfaces with section over $\PP^1$,
all three notions of linear, algebraic and numerical equivalence coincide.
For our quintics we will thus use $\Pic(S)$ and $\NS(S)$ interchangeably
and also refer to these groups as lattices with the intersection form.

\subsection{}
\label{ss:2.2}

Having seen that intersection numbers give rise to lower bounds for the Picard number,
it remains to discuss possible ways to derive upper bounds.
Here cohomology enters the game through the cycle class map
which embeds $\NS(S)\otimes\Q$ into $H^2(S,\Q)$ for complex surfaces,
or generally
\begin{eqnarray}
\label{eq:et}
\NS(S)\otimes\Q_\ell\hookrightarrow H^2_\text{\'et}(S,\Q_\ell(1))
\end{eqnarray}
for $\ell$-adic \'etale cohomology with a Tate twist.
The latter embedding directly gives the characteristic free upper bound
\begin{eqnarray}
\label{eq:Igusa}
\rho(S)\leq b_2(S)
\end{eqnarray}
originally due to Igusa \cite{Igusa}.
Over $\C$, this can be improved thanks to the Hodge decomposition and Lefschetz' theorem
which states that
\[
(\NS(S)/\text{torsion}) \cong H^2(S,\Z) \cap H^{1,1}(S).
\]
In particular, this gives
\begin{eqnarray}
\label{eq:Lef}
\rho(S) \leq h^{1,1}(S),
\end{eqnarray}
the best general upper bound for the Picard number over $\C$.
As an application, we read off for a surface with geometric genus $p_g(S)=0$,
either complex or lifting to characteristic zero,
that $\rho(S)=b_2(S)$.

For later use, we mention the relevant invariants for K3 surfaces and quintics
that can be obtained from classical formulae such as Noether's formula:
\begin{itemize}
\item
a K3 surface $S$ has $b_2(S)=22, p_g(S)=1, h^{1,1}(S)=20$;
\item
a quintic $S$ has $b_2(S)=53, p_g(S)=4, h^{1,1}(S)=45$.
\end{itemize}

\subsection{Singular and supersingular surfaces}

In view of the upper bounds \eqref{eq:Lef} resp.~\eqref{eq:Igusa},
given an algebraic surface $S$,
there is one Picard number with an obvious strategy to prove:
the maximum one where one 'only' has to exhibit enough independent divisors on $S$.
Surfaces attaining the maximum of \eqref{eq:Lef} over $\C$ are sometimes called singular (in the sense of exceptional),
and those with $\rho(S)=b_2(S)$ (over fields of positive characteristic) are often referred to as supersingular.
It may come as a bit surprising at first that the latter turn out to be much easier to find.
For instance, any unirational surface is automatically supersingular,
and in fact one can explicitly determine the characteristics where a Fermat surface of given degree
becomes supersingular as we shall exploit in the next section (see Example \ref{ex:TT}).
In contrast, singular surfaces tend to be much harder to find, and it was only in \cite{S-quintic}
that a singular quintic was exhibited.
We mention that the record Picard number for smooth quintics seems to be 41 by \cite{RS}.
Overall, for surfaces of general type, it is an open problem whether a given deformation class
admits any singular members at all, a notable exception being Horikawa surfaces 
with a congruence condition on the Euler characteristic due to Persson \cite{Persson}.

For surfaces of Kodaira dimension less than two, the situation is quite different.
For instance, on elliptic surfaces over $\PP^1$ with section, 
we can always arrange for enough reducible fibers so that
over some number field the bound \eqref{eq:Lef} is attained.
Similarly, purely inseparable base change may lead to supersingular surfaces
(if we start from a rational elliptic surface, then even to unirational ones).
Below we sketch the situation for K3 surfaces before elaborating on possible methods to 
improve the general upper bounds from \eqref{eq:Igusa} and  \eqref{eq:Lef}.

\subsection{Moduli and Picard numbers of K3 surfaces}
\label{ss:K3}

For K3 surfaces (complex, say, although this restriction is not strictly necessary),
Problem \ref{prob} has a systematic answer in terms of moduli theory.
Namely K3 surfaces with $\rho\geq r$ for given $r\leq h^{1,1}=20$ come in $20-r$ dimensional families.
Explicitly these can be given by a lattice polarisation,
i.e.~each member admits a primitive embedding of the generic N\'eron-Severi lattice 
into the special one (containing the ample class).
Thus we obtain a full solution to Problem \ref{prob} for K3 surfaces over $\C$,
but this result does not a priori extend to any number field.
In fact, it would have been feasible, though unexpected that 
while a very general K3 surface has Picard number one,
all the K3 surfaces over $\bar\Q$ lie on the countably many hyperplanes in the moduli space
comprising K3 surfaces with lattice polarisations of Picard number at least two.
This unlikely behaviour was ruled out for K3 surfaces of any degree 
%(i.e.~$H^2$ for an ample class)
by Terasoma and Ellenberg:

\begin{Theorem}[Terasoma \cite{Terasoma}, Ellenberg \cite{Ell}]
For any $d\in\N$, there is a K3 surface of degree $2d$ over $\bar\Q$ with Picard number one.
\end{Theorem}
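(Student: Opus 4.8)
The plan is to show, for each $d$, that some smooth K3 surface $X\subset\PP^N$ of degree $2d$ defined over $\Q$ has $\NS(X_{\bar\Q})=\Z H$, where $H$ is the polarisation with $H^2=2d$; equivalently, $\rho(X_{\bar\Q})=1$. Aiming for a contradiction, suppose $\rho(X_{\bar\Q})\ge 2$ for a candidate $X$. The only available tool is reduction modulo a prime $p$ of good reduction: writing $\bar X$ for the geometric special fibre, the specialisation homomorphism $\NS(X_{\bar\Q})\hookrightarrow\NS(\bar X)$ is injective and respects the intersection form, and $\rho(\bar X)$ is in turn bounded by the number of eigenvalues of geometric Frobenius $\Fr_p$ on $\het{\bar X}$ of the form $q\zeta$ with $\zeta$ a root of unity. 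On the $21$-dimensional primitive part (recall $b_2=22$ for a K3 surface) the Frobenius eigenvalues are permuted by $\alpha\mapsto q^2/\alpha$, and since $21$ is odd at least one of them is $\pm q$; thus this bound is always $\ge 2$, and a single prime can never do better than $\rho(X_{\bar\Q})\le 2$. The idea is therefore to play two well-chosen primes against each other, in the spirit of van Luijk.

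For a good prime $p$ one reads off the Frobenius action by counting points over $\F_p$ and applying the Lefschetz trace formula, obtaining the characteristic polynomial $P(T)$ of $\Fr_p$ on $\het{\bar X}$. Families of degree-$2d$ K3 surfaces have large geometric monodromy on primitive cohomology --- a finite-index subgroup of the orthogonal group of the primitive lattice, by the standard monodromy theory going back to Deligne --- so a member lying in a suitable dense open stratum over $\F_p$ (nonempty for $p$ large, by Lang--Weil) has $\Fr_p$ with exactly two eigenvalues of the form $q\zeta$: the polarisation eigenvalue $q$ and one further eigenvalue $\pm q$ forced by the parity above. For such a member $\rho(\bar X)\le 2$, which combined with $\rho(\bar X)\ge\rho(X_{\bar\Q})\ge 2$ from the running assumption gives $\rho(\bar X)=2$, so $L_p:=\NS(\bar X)$ is a rank-$2$ lattice containing $\Z H$. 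The Artin--Tate formula (known for K3 surfaces over finite fields of odd characteristic), together with the fact that a finite Brauer group has square order, then recovers $|\disc L_p|$ modulo squares from $P(T)$, essentially from the value at $T=q$ of the factor carrying the non-algebraic eigenvalues.

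Now choose two large primes $p_1\ne p_2$ and members $\bar X_1,\bar X_2$ of the family over $\F_{p_1},\F_{p_2}$, each in its $\rho=2$ stratum, with the rank-$2$ lattices $L_1,L_2$ arranged so that $\disc L_1/\disc L_2\notin(\Q^{\times})^2$; that both square classes occur as $p$ varies follows from equidistribution of Frobenius. By the Chinese remainder theorem and Hensel's lemma there is a degree-$2d$ K3 surface $X$ over $\Q$ in the family whose reduction modulo $p_i$ lies in the same open stratum as $\bar X_i$. For this $X$ the assumption gives $\rho(X_{\bar\Q})\ge 2$, while $\rho(X_{\bar\Q})\le\rho(\bar X_i)=2$; hence $\NS(X_{\bar\Q})$ is a rank-$2$ lattice containing $H$ that embeds with finite index into both $L_1$ and $L_2$. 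A finite-index sublattice of $L_i$ has discriminant $(\text{index})^2\disc L_i$, so
\[
\disc\NS(X_{\bar\Q})\equiv\disc L_1\equiv\disc L_2\pmod{(\Q^{\times})^2},
\]
contradicting the choice of $p_1,p_2$. Therefore $\rho(X_{\bar\Q})=1$.

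The main obstacle lies precisely in this last construction: one must choose $p_1$ and $p_2$ so that the discriminants of the generic rank-$2$ reductions land in different square classes --- a quadratic-reciprocity/equidistribution argument driven by the explicit point counts --- while at the same time keeping the $\rho=2$ stratum nonempty over each $\F_{p_i}$ and ensuring a common lift to a surface defined over $\Q$. The remaining ingredients --- injectivity and intersection-compatibility of the specialisation homomorphism, the parity of the primitive Betti number, big monodromy for families of K3 surfaces, the Artin--Tate formula and squareness of a finite Brauer group, and the mere existence of degree-$2d$ K3 models over $\Q$ --- are standard and may be invoked off the shelf. The genuinely computational part is producing $P(T)$ to sufficient precision and evaluating its relevant factor at $T=q$.
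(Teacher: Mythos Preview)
The paper does not give its own proof of this theorem; it is quoted as a cited result of Terasoma and Ellenberg. Their original arguments are quite different from yours: they work directly with the $\ell$-adic Galois representation on primitive cohomology, use big monodromy to show that the image of Galois for the family is open in the orthogonal group, and then invoke a Hilbert-irreducibility type specialisation (Terasoma) or a large-image argument (Ellenberg) to produce a $\bar\Q$-fibre whose Galois representation on $H^2$ remains irreducible outside the polarisation class, forcing $\rho=1$. No reduction modulo primes, no comparison of discriminants, and no Artin--Tate formula enter those proofs.

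Your proposal is instead a van Luijk-style two-prime argument, which historically postdates both references and, in the form you give, leans on the Tate conjecture for K3 surfaces over finite fields (needed to get a finite Brauer group and hence the Artin--Tate discriminant formula) that was established only recently. The strategy is sound in outline, but there is a genuine gap in the existence step. You claim that a member ``lying in a suitable dense open stratum over $\F_p$ (nonempty for $p$ large, by Lang--Weil)'' has Frobenius with exactly two eigenvalues of the form $q\zeta$. That is not right: the condition on Frobenius eigenvalues is not cut out by a subscheme of the parameter space over $\F_p$; it is an arithmetic condition on the individual fibre, and Lang--Weil says nothing about it. The correct input is Deligne's equidistribution theorem in the Katz--Sarnak form: big geometric monodromy implies that Frobenius conjugacy classes at $\F_p$-points become equidistributed in (a maximal compact of) the monodromy group as $p\to\infty$, and a Haar-generic orthogonal element in dimension $21$ has no root-of-unity eigenvalues beyond the single forced $\pm 1$. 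You then appeal to ``equidistribution of Frobenius'' again to obtain both discriminant square classes, but never name the statement or check that the square class --- a function of the characteristic polynomial via Artin--Tate --- is nonconstant on the support of the limiting measure. These are precisely the points where your sketch stops being a proof; once the Lang--Weil appeal is replaced by the correct equidistribution input and the variation of the discriminant square class is verified, the approach should go through.
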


Terasoma's result holds true more generally over $\Q$
and for any complete intersection surface
of non-negative Kodaira dimension,
in particular for quintics.
His work was a natural continuation of Deligne's proof of Noether's conjecture
that a generic hypersurface in $\PP^3$ of degree at least 4 
has Picard number one \cite{SGA7II}.

However, it is a completely different task to exhibit an explicit surface with Picard number one over $\Q$ (or $\bar\Q$).
For quintics and surfaces of higher degree, this was first achieved by Shioda \cite{Sh-PicV}, but for
K3 surfaces only in 2007 by van Luijk  \cite{vL}.
Our arithmetic deformation technique, however, will enable us to engineer K3 surfaces over $\Q$
with $\rho=1$ in abundance as we shall exploit in \ref{ss:K3-1}.
Meanwhile we end this section by reviewing possible methods to improve the general upper bounds
\eqref{eq:Igusa}, \eqref{eq:Lef} for the Picard number.

\subsection{Upper bounds}
\label{ss:upper}

In essence, there are two ways to improve the general upper bounds
\eqref{eq:Igusa}, \eqref{eq:Lef} for the Picard number.
The first relies on the use of non-symplectic automorphisms on a given surface $S$ (over $\C$). 
By analysing their induced action on the holomorphic two-forms of $S$,
one can often endow the transcendental part $T(S)$ of the  Hodge structure on $H^2(S,\Z)$
(i.e.~the minimal sub-Hodge structure whose complexification contains $H^{2,0}(S)$)
with the structure of a module over some cyclotomic integers, or even split it up further.
This often leads to congruence conditions for the rank of $T(S)$ and thus to
rank estimates exceeding the minimum of $2h^{2,0}(S)$
which in turn decreases the upper bound for $\rho(S)$ as pioneered in \cite{Sh-PicV}.
We will briefly come back to this theme in \ref{ss:9},
and also in the special situation of a wild automorphism in positive characteristic in Section \ref{s:6-8}.

Abstractly, one can also do with related, but less geometric information
which is encoded in the Hodge group $E=\mbox{End}(T(S))$,
i.e.~the endomorphism algebra of $T(S)$ respecting the Hodge decomposition.
Upon endowing $T(S)$ with the structure of a vector space over $E$,
one can derive a congruence condition for the rank of $T(S)$ generalising the above argument starting from surface automorphisms.
However,
there is no algorithm known that would determine $\mbox{End(T(S))}$.
For a K3 surface $S$,
Charles worked out an algorithm conditional on the Hodge conjecture for codimension two cycles 
on  $S\times S$ in \cite[Thm.~5 and its proof]{Charles},
but this seems unfeasible to execute in practice.

\subsection{Specialisation}
\label{ss:spec}

The second standard approach towards improving the general upper bounds
 for the Picard number consists in specialisation.
 Here we are mostly concerned with an algebraic surface $S$ over some number field $K$
 and consider a prime $\p$ of good reduction.
 Denoting $S_\p = S\otimes\F_\p$, we obtain a specialisation embedding
 \begin{eqnarray}
 \label{eq:spec}
 \NS(S) \hookrightarrow \NS(S_\p)
 \end{eqnarray}
 which respects the intersection pairing. Directly we infer the upper bound for the Picard number of $S$
 \begin{eqnarray}
 \label{eq:rho-p}
 \rho(S) \leq \rho(S_\p).
 \end{eqnarray}
 The reader may wonder how this really improves our situation in practice.
 The reason lies in the Tate conjecture \cite{Tate}
which states that the algebraic part of $H^2_\text{\'et}(S_\p,\Q_\ell(1))$,
i.e.~the image of the cycle class map \eqref{eq:et},
is exactly the subspace where the absolute Galois group acts through a finite group.
More precisely, if $S$ is defined over some finite field $\F_q$,
then the Tate conjecture postulates that the subspace of $\NS(S_\p)\otimes\Q_\ell$
generated by divisor classes defined over $\F_q$,
is isomorphic to the fixed locus of Galois   in the second cohomology:
\[
\NS(S_\p/\F_q)\otimes\Q_\ell \cong H^2_\text{\'et}(S_\p,\Q_\ell(1))^{\Gal(\bar\F_q/\F_q)}
\]
The Tate conjecture is open in general, but it has recently been proved for K3 surfaces
outside characteristic $2$
by work of Maulik \cite{Maulik}, Charles \cite{Charles-Tate} and Madapusi Pera \cite{Madapusi}
(based on the now classical case of elliptic K3 surfaces from \cite{ASD}).
Independently of the validity of the Tate conjecture,
the embedding \eqref{eq:et} gives an upper bound for $\rho(S)$ as follows:
it is known that $\NS(S_\p)$ is always generated by divisor classes 
defined over a finite extension of the ground field;
this implies that all eigenvalues of Frobenius on the algebraic part inside $H^2_\text{\'et}(S_\p,\Q_\ell(1))$
are roots of unity.
The Tate conjecture postulates that any eigenspace with eigenvalue of Frobenius a root of unity $\zeta$ is algebraic,
but at any rate their number (with multiplicities) gives an upper bound for the Picard number:
if $\alpha_1,\hdots,\alpha_{22}$ denote the eigenvalues of Frobenius on $H^2_\text{\'et}(S_\p,\Q_\ell(1))$,
then
\begin{eqnarray}
\label{eq:number}
\rho(S) \leq \rho(S_\p) \leq \# \{i \in\{1,\hdots,22\}; \exists\, n\in\N: \alpha_i^n=1\}.
\end{eqnarray}
This is of use for practical reasons
because the characteristic polynomial $\chi_\p$ of Frob$_\p^*$ on $H^2_\text{\'et}(S_\p,\Q_\ell(1))$ can,
at least in principle, be computed from point counts over sufficiently many extensions of $\F_\p$
by applying Lefschetz' fixed point formula
(for recent improvement using $p$-adic cohomology, see  \cite{AKR}).
By this means, one can often derive a much better bound than \eqref{eq:Igusa}
resp.  \eqref{eq:Lef}.
However, there are two drawbacks on top of computational matters:
this approach is far from being constructive
and, maybe more importantly, there is a parity condition
imposed by the Weil conjectures
which imply that $\chi_\p$ has coefficients over $\Q$.
In consequence, the right-hand-side number in \eqref{eq:number} always assumes the same parity as $b_2(S)$.
Thus the above method cannot be used directly to prove odd Picard number for some K3 surface,
or even Picard number for some quintic.
We will overcome this for quintics by employing our  constructive approach
of arithmetic deformations, without using any point counting at all,
which will be introduced in Section \ref{s:af}.
Meanwhile we end this section by reviewing what's been used to manoeuvre around 
the parity obstruction for K3 surfaces.

\subsection{Improvements against parity}

It was only in 2007 that van Luijk published an idea
how to prove Picard numbers of the 'wrong' parity based on the above specialisation properties.
Roughly, this requires two different primes
where \eqref{eq:number} gives the same upper bound for $\rho(S)$.
Then one checks whether the N\'eron-Severi lattices of the two specialisations are compatible.
If, for instance, the square classes of their discriminants do not agree, then we infer that the specialisation embedding
\eqref{eq:spec} cannot be of finite index, so $\rho(S)<\rho(S_\p)$.
Van Luijk used this to exhibit a quartic K3 surface of Picard number one \cite{vL}.
Applied to sufficiently many primes,
this approach
would usually succeed in returning the Picard number of a K3 surface $S$ over a number field.
A notable obstacle comes from the endomorphism algebra $\mbox{End}(T(S))$,
see \ref{ss:upper} and for details \cite{Charles}.
We point out that Charles' ideas in \cite{Charles} heavily depend on the fact that $h^{2,0}=1$
for a K3 surface; hence they do not carry over to quintics or most other surfaces of general type.

Subsequent to van Luijk's work, Elsenhans and Jahnel \cite{EJ} pointed out that one can do with specialisation at a single prime
if one additionally studies the obstructions to lifting divisors back from $S_\p$ to $S$.
The proposed method is based on work  of Raynaud on Picard groups \cite{Ray}.
Below we give a simplified statement over $\Z$. 
The general statement for discrete valuations rings in mixed characteristic involves a condition on the ramification degree \cite[Thm.~3.6]{EJ}.

\begin{Theorem}
\label{thm:prim}
Let $S$ be a projective surface defined over $\Q$
and $p>2$ a prime of good reduction. 
Then the specialisation embedding \eqref{eq:spec} is primitive.
\end{Theorem}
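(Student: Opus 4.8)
The plan is to deduce Theorem~\ref{thm:prim} from the general mixed-characteristic criterion of Elsenhans--Jahnel \cite[Thm.~3.6]{EJ}, whose ramification hypothesis $e<p-1$ specialises, for the ramification index $e=1$ of $\Z_p$ over $\Z$, to exactly $p>2$; let me indicate the mechanism behind it. Since the specialisation embedding \eqref{eq:spec} is already known to be injective, primitivity is the assertion that $\NS(S_\p)/\NS(S)$ is torsion-free, i.e.~that for every prime $\ell$ a class $D\in\NS(S_\p)$ with $\ell D$ in the image of \eqref{eq:spec} already lies in that image. Good reduction at $p$ furnishes a smooth proper model $\mS$ over $\Z_{(p)}$ with generic fibre $S$ and special fibre $S_\p$; base-changing to the complete discrete valuation ring $\Z_p$ --- which affects neither the geometric N\'eron--Severi groups involved nor the specialisation map between them --- reduces us to showing that a line bundle on $S_\p$ whose $\ell$-th power extends to $\mS$ already extends to $\mS$, after which one restricts to the generic fibre. (We work throughout with N\'eron--Severi classes; the continuous $\Pic^0$-part lifts without obstruction, and for the quintics and K3 surfaces relevant here it is in any case trivial.)

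First I would set up the obstruction theory. Writing $\mS_n=\mS\otimes\Z/p^{n+1}\Z$, so that $\mS_0=S_\p$, the exact sequences $1\to 1+p\OO_{\mS_n}\to\OO_{\mS_n}^{*}\to\OO_{S_\p}^{*}\to 1$ --- in which the hypothesis $p>2$ enters, so that the truncated $p$-adic logarithm identifies the multiplicative kernel with the additive sheaf $p\OO_{\mS_n}$, itself a successive extension of copies of $\OO_{S_\p}$ --- produce, via the connecting homomorphisms, additive obstruction maps $\delta_n$ on $\Pic(S_\p)$ taking values in finite abelian $p$-groups governed by $H^2(S_\p,\OO_{S_\p})$. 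A line bundle lifts through the tower $\{\mS_n\}$ precisely when all the $\delta_n$ kill it; and since $H^1(S_\p,\OO_{S_\p})$ is finite, the associated systems of lifts satisfy the Mittag--Leffler condition, so by Grothendieck's existence theorem on the proper $\Z_p$-scheme $\mS$ such a class extends to $\mS$ as soon as no $\delta_n$ obstructs it.

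For $\ell\neq p$ this already settles the matter: multiplication by $\ell$ is invertible on any finite $p$-group, hence on every target of the $\delta_n$, so $\delta_n(D)=0$ if and only if $\delta_n(\ell D)=0$; thus $\ell D$ extendable forces $D$ extendable, with no condition on $p$.

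The case $\ell=p$ is the crux, and the one place where the naive obstruction calculus is insufficient: multiplication by $p$ is not invertible on the targets of the $\delta_n$, so knowing that $pD$ extends merely forces the obstruction classes $\delta_n(D)$ to be annihilated by $p$, which does not make them vanish. To conclude one must leave the infinitesimal picture and exploit the finer structure of the relative Picard scheme $\Pic_{\mS/\Z_p}$, in particular of its component-group scheme, as analysed by Raynaud \cite{Ray}: the upshot is that for $p>2$ --- equivalently $e<p-1$ in the general formulation --- this group scheme carries no extraneous $p$-torsion through which a class whose $p$-th multiple extends could fail to extend itself. I expect this to be the genuinely hard input; it is exactly what \cite{Ray}, as packaged in \cite[Thm.~3.6]{EJ}, supplies, and invoking it completes the proof.
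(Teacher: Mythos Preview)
The paper does not give its own proof of Theorem~\ref{thm:prim}: it states the result as a specialisation over $\Z$ of \cite[Thm.~3.6]{EJ}, itself based on Raynaud's work \cite{Ray}, and simply cites it. Your proposal takes exactly the same route --- invoking \cite[Thm.~3.6]{EJ} with $e=1<p-1$ --- and then goes further by sketching the obstruction-theoretic mechanism and isolating where Raynaud's input is genuinely needed (the $\ell=p$ case); this is a correct and helpful expansion of what the paper leaves as a citation.
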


In other words, the cokernel of \eqref{eq:spec} is torsion-free.
As a consequence, we find:

\begin{Corollary}
\label{cor:lift}
In the above set-up,
assume that some divisor class does not lift from $S_p$ to $S$.
Then 
\[
\rho(S)<\rho(S_p).
\]
\end{Corollary}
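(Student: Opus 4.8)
The plan is to read this off directly from Theorem \ref{thm:prim} together with the specialisation embedding \eqref{eq:spec}. The statement of the corollary is, in essence, the contrapositive of surjectivity of the specialisation map once we know it is primitive, so the proof should be short.

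First I would recall the set-up: $S$ is a projective surface over $\Q$, $p>2$ is a prime of good reduction, and we have the specialisation embedding $\NS(S)\hookrightarrow\NS(S_p)$ from \eqref{eq:spec}, which is compatible with the intersection pairing. By Theorem \ref{thm:prim} this embedding is primitive, i.e.\ its cokernel $\NS(S_p)/\NS(S)$ is torsion-free. Next I would argue by contradiction: suppose that $\rho(S)=\rho(S_p)$. Then $\NS(S)$ and $\NS(S_p)$ have the same rank, so the torsion-free cokernel $\NS(S_p)/\NS(S)$ has rank zero, hence is trivial. This means $\NS(S)\to\NS(S_p)$ is surjective, i.e.\ \emph{every} divisor class on $S_p$ lifts to $S$, contradicting the hypothesis that some class does not lift. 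Therefore $\rho(S)<\rho(S_p)$; combined with the general inequality \eqref{eq:rho-p} this is exactly the assertion (the inequality $\rho(S)\le\rho(S_p)$ being automatic).

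The only genuinely substantive input is Theorem \ref{thm:prim} (primitivity, due to Elsenhans--Jahnel via Raynaud), which we are allowed to invoke; once that is in hand the argument is a one-line rank count. The one point requiring a word of care is that "divisor class lifts" should be read as "lies in the image of \eqref{eq:spec}", so that the non-lifting hypothesis is precisely the statement that \eqref{eq:spec} is not surjective; since a primitive embedding of finitely generated abelian groups of equal rank is automatically an isomorphism, non-surjectivity forces a strict drop in rank. Thus the main (and only) obstacle is really bookkeeping about what "lift" means, and there is no hard analytic or geometric content beyond Theorem \ref{thm:prim} itself.
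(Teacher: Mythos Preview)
Your proof is correct and matches the paper's approach: the paper does not spell out a proof but presents the corollary as an immediate consequence of Theorem~\ref{thm:prim} (torsion-free cokernel), and your rank-count argument is exactly the intended one-line deduction.
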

Combined with the upper bound for $\rho(S_p)$ from \eqref{eq:number},
this was used in \cite{EJ} to prove Picard number one for a K3 surfaces 
using reduction and point counting at one prime only.

Our arithmetic deformation technique will use Theorem \ref{thm:prim} as a key ingredient.
In favour of a constructive nature, however, we will try to do without point counting at all.
To this end, we study surfaces where the Picard number can be controlled completely
(without this problem being trivial, as for surfaces with $p_g=0$).
We will set the scene in the next section.

% $\ell$-adic cohomology: \cite{LPT}

\section{Fermat and Delsarte surfaces}
\label{s:del}

\subsection{Fermat surfaces}
\label{ss:Fermat}

We start with a very special sort of surfaces
whose geometry and arithmetic has been understood quite well for a long time:
Fermat surfaces, defined for given degree $m$ by
 \[
S_m = \{x^{m}+y^{m}+z^{m}+w^{m}=0\}\subset\PP^3.
\]
It goes back to A.~Weil that their zeta functions over finite fields can be described in terms of Gr\"ossencharakters
\cite{Weil}.
Underlying all considerations, there is the automorphism group of $S_m$
which not only contains the symmetric group on four letters,
but also three independent scalings of coordinates by the group $\mu_m$ of $m$-th roots of unity.
Before analysing their action on $S$ and its cohomology, we emphasise 
how the automorphism group led to the discovery of Fermat surfaces 
as arguably the first non-trivial examples of supersingular surfaces:

\begin{Example}
[Tate-Thompson {\cite[p.~102]{Tate}}]
\label{ex:TT}
Let $m'=p^\nu+1$. Then $S_{m'}$ is supersingular over $\bar\F_p$,
since its automorphism group contains the unitary group $U(4)$ over $\F_{p^{2\nu}}$
which acts on $H^2_\text{\'et}(S_{m'},\Q_\ell)$ through two irreducible representations,
one of dimension $1$ and one of dimension $b_2(S_{m'})-1$.
Through the dominant morphism $S_{m'} \to S_{m}$ for $m\mid m'$,
this also  proves supersingularity 
for all Fermat surfaces of degree dividing $m'$,
that is, if $p^\nu\equiv -1\mod m$. 
\end{Example}

Following ideas of Weil, Katz, Ogus and Shioda,
one can decompose the primitive cohomology $\Hp$,
i.e. the orthogonal complement of the hyperplane section $H$
understood in singular or $\ell$-adic cohomology,
into one-dimensional eigenspaces $V(\alpha)$ with character $\alpha$ for the induced action of $\mu_m^3$:
\[
\Hp \cong \bigoplus_{\alpha\in\A_m} V(\alpha).
\]
Here $\alpha$ runs through the following subset of the character group of $\mu_m^3$:
\[
\A_{m}:=
\left\{\alpha=(a_1,a_2,a_3)\in(\Z/m\Z)^3\,|\, a_i\neq 0,\,\sum_{i=1}^3 a_i\neq 0\,\right\}.
\]
For us, it is crucial to decide which eigenspaces $V(\alpha)$ are algebraic
(i.e.~contained in the image of $\NS(S_m)$ under the cycle class map \eqref{eq:et}).
Over $\C$, this can be worked out using the Galois group $\Gal(\Q(\mu_m)/\Q)\cong (\Z/m\Z)^\times$
which acts on the eigenspaces by coordinatewise multiplication:
\[
(\Z/m\Z)^\times\ni t:\;\;V(\alpha)\mapsto V(t\cdot\alpha).
\]
We will need the Hodge type of the eigenspace $V(\alpha)$.
For this purpose, note that $\alpha\in\A_m$ can be given by canonical representatives 
$0<b_1, b_2, b_3<m$ and define 
\[
|\alpha | = \left\lfloor \sum_{i=1}^3 b_i/m\right\rfloor \in\{0,1,2\}.
\]
It can be derived from the induced action of $\mu_m^3$ on $H^{2,0}(S_m)$
that the eigenspace $V(\alpha)$ has Hodge type 
$(2-|\alpha |, |\alpha |)$.
Hence $V(\alpha)$ is algebraic if and only if the full orbit under $\Gal(\Q(\mu_m)/\Q)$ 
has Hodge type $(1,1)$.
That is, $|t\cdot\alpha |=1$ for each member $t\cdot \alpha$ in the Galois orbit of $\alpha$,
written out as $m<\sum_i b_i<2m$ for each member in the orbit.
In particular, we can compute  the Picard number  combinatorially by 
singling out all $\alpha\in\A_m$ whose Galois orbit does not leave the Hodge type $(1,1)$.
Computations become especially transparent in special cases, for instance for degree $m$
relatively prime to $6$ where all of $\NS(S_m)$ is generated over $\Z$ by lines by \cite{Deg}
(see also  \cite{Sh-Fermat} and \cite{SSvL}).

Alternatively, and slightly more direct, one may compute the \emph{Lefschetz number} 
$\lambda(S)$
by summing up over the $\Gal(\Q(\mu_m)/\Q)$-orbits
of all $\alpha\in\A_m$ of Hodge type $(2,0)$,
i.e. with $\sum_i b_i<m$, 
and use
\begin{eqnarray}
\label{eq:b_2}
b_2(S) = \lambda(S) + \rho(S).
\end{eqnarray}
In positive characteristic $p\nmid m$, there is but a small addition necessary
depending on the order $o$ of $p$ in $(\Z/m\Z)^\times$.
Namely, an eigenspace $V(\alpha)$ is algebraic over $\bar\F_p$ if and only if
each member of its Galois orbit is algebraic on average upon multiplying with $p$-powers:
\begin{eqnarray}
\label{eq:o}
\sum_{j=0}^{o-1} |p^j\cdot\alpha | = o \;\;\; \forall\, t\in(\Z/m\Z)^\times.
\end{eqnarray}
Again, we can thus compute Lefschetz number and  Picard number
in a purely combinatorial way. 
We note two special cases:
on the one hand, we recover the supersingularity from Example \ref{ex:TT}.
On the other hand, we find that the Picard number stays constant upon reduction for certain primes (which was also noticed in \cite{Tate}):

\begin{Lemma}[Tate]
\label{lem:rho-Fermat}
Let $S_m$ denote the Fermat surface of degree $m$ and consider a prime $p\equiv 1\mod m$.
Then
\[
\rho(S\otimes\bar\F_p) = \rho(S\otimes\C).
\]
\end{Lemma}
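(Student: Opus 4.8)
The plan is to use the combinatorial criterion for algebraicity of the eigenspaces $V(\alpha)$ that was just developed, and simply observe that the condition in characteristic $p$ collapses to the condition over $\C$ whenever $p\equiv 1\bmod m$. Recall that over $\C$ the eigenspace $V(\alpha)$, for $\alpha\in\A_m$, is algebraic precisely when $|t\cdot\alpha|=1$ for every $t$ in the Galois orbit, i.e.\ for every $t\in(\Z/m\Z)^\times$; while over $\bar\F_p$ (with $p\nmid m$) the criterion is \eqref{eq:o}, namely $\sum_{j=0}^{o-1}|p^j\cdot(t\cdot\alpha)|=o$ for every $t\in(\Z/m\Z)^\times$, where $o$ is the multiplicative order of $p$ modulo $m$.

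First I would note that the hypothesis $p\equiv 1\bmod m$ forces $o=1$. Hence the sum in \eqref{eq:o} has a single term $j=0$, and the condition \eqref{eq:o} reads $|t\cdot\alpha|=1$ for every $t\in(\Z/m\Z)^\times$ --- which is verbatim the complex algebraicity criterion for $V(\alpha)$. Therefore the set of $\alpha\in\A_m$ contributing algebraic eigenspaces over $\bar\F_p$ coincides with the set contributing them over $\C$.

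Next I would convert this into the equality of Picard numbers. Since $\Hp\cong\bigoplus_{\alpha\in\A_m}V(\alpha)$ with each $V(\alpha)$ one-dimensional, and the Picard number is $1$ (the hyperplane class $H$, which is always algebraic) plus the number of $\alpha\in\A_m$ whose full Galois orbit gives algebraic eigenspaces, the previous paragraph yields $\rho(S_m\otimes\bar\F_p)=\rho(S_m\otimes\C)$ directly. Equivalently, one may phrase this via the Lefschetz number: $\lambda$ counts the Galois orbits of $\alpha$ of Hodge type $(2,0)$ over $\C$, the same orbits are non-algebraic over $\bar\F_p$ when $o=1$, and \eqref{eq:b_2} then gives the claim since $b_2(S_m)$ is independent of the characteristic (for $p\nmid m$, which is implied by $p\equiv 1\bmod m$).

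I do not expect a genuine obstacle here; the statement is essentially a bookkeeping consequence of the two criteria already established. The only point requiring a word of care is the implicit assumption $p\nmid m$, which is automatic from $p\equiv 1\bmod m$ (as then $p>m\geq$ the relevant factors, or at least $p\nmid m$ since $p\mid m$ would force $1\equiv 0$), so that the Fermat surface $S_m$ has good reduction at $p$ and the eigenspace decomposition of $\Hp$ is available in characteristic $p$ with the same index set $\A_m$.
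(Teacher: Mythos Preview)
Your proof is correct and follows exactly the approach the paper has in mind: the lemma is stated immediately after the criterion \eqref{eq:o} as one of its two special cases, with no separate proof given, since when $p\equiv 1\bmod m$ the order $o$ equals $1$ and \eqref{eq:o} collapses to the complex criterion. You have simply written out this observation in detail.
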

For other primes, one has to use \eqref{eq:o} to work out the Picard number over $\bar\F_p$.

\begin{Example}
\label{ex:55}
On the Fermat surface of degree $m=55$, 
consider the  eigenspace $V(\alpha)$ for $\alpha=(9,11,10)$ of Hodge type $(2,0)$ over $\C$.
Its orbit under $(\Z/m\Z)^\times$ has size $40$.
By \eqref{eq:o}, the orbit is algebraic over $\bar\F_p$ if and only if the residue class of $p$ modulo $m$ 
lies in the following set:
\[
%(p\mod m) 
%\in 
\mathcal H:=\{3, 19, 24, 27, 29, 37, 38, 39, 42, 47, 48, 53, 54\}\subset(\Z/m\Z)^\times.
\]
$\mathcal H$ comprises those primes $p$ with some $\nu\in\N$ such that $p^\nu\equiv -1\mod m$ (cf.~Lemma \ref{lem:del}), 
and all odd powers of $3$ modulo $m$.
\end{Example}

%\begin{Example}
%On the Fermat surface of degree $m=8$, 
%consider the  eigenspaces $V(\alpha)$ for the following $\alpha$ of Hodge type $(2,0)$ over $\C$:
%\[
%\alpha = (1,1,1),\;\; (1,1,3), \;\;(2,2,2).
%\]
%The eigenspaces are algebraic over $\bar\F_p$ if and only if
%\[
%p\equiv -1\mod 8\;\; \text{ resp. }\;\; p\equiv -1,-3\mod 8\;\; \text{ resp. }\;\; p\equiv -1 \mod 4.
%\]
%\end{Example}

\subsection{Delsarte surfaces}
\label{ss:del}

A Delsarte surface is (the minimal resolution of) a surface in $\PP^3$ % (or generally in affine threespace)
which can be defined by an irreducible polynomial consisting of four monomials. 
For our purposes,
the crucial property of Delsarte surfaces is
that each is birational to the Galois quotient $S_m/G$ of a Fermat surface $S_m$ 
by a finite subgroup $G\subset\mu_m^3$. 
Here $m$ and $G$ can be determined in terms of the cofactor matrix associated to the exponent matrix of
the defining polynomial following \cite{Sh}.

\begin{Remark}
\label{rem:scale}
For later reference, we point out that the precise coefficients of the four monomials may affect the arithmetic
of the Delsarte surface,
but not its geometry,
as we can always rescale coordinates to normalise the coefficients
over an algebraically closed field.
In fact, this property paves the way to the above use of the discrete data of the exponent matrix of
the defining polynomial.
\end{Remark}

Since the Delsarte surface $S$ arises from $S_m/G$ by resolution of singularities,
if necessary followed by blowing down  $(-1)$-curves until we reach a minimal surface,
it is a non-trivial task to describe the algebraic part of $H^2(S)$ in terms of $S_m$.
Instead, consider the non-algebraic part 
perpendicular to the image of $\NS(S)$ under the cycle class map;
often this is called the transcendental lattice and denoted by
\[
T(S) = \NS(S)^\vee\subset H^2(S).
\]
By \cite{Sh},
this
is governed completely by $S$.
In detail, $T(S)$ is given by the non-algebraic eigenspaces $V(\alpha)\subset H^2(S_m)$
which are invariant under the induced $G$-action.
Explicitly, if $g\in G$ acts on $S_m$ as
\[
 [x,y,z,w] \mapsto [x,\zeta_1y, \zeta_2z,\zeta_3w], \;\;\;\;\;\; \zeta_i\in\mu_m\; (i=1,2,3),
\]
then $V(\alpha)$ is invariant under $g^*$ if and only if
\[
\prod_{i=1}^3 \zeta_i^{a_i} =1\;\;\;\; (\alpha=(a_1,a_2,a_3)).
\]
Again this shows that we can compute the Lefschetz number of $S$ purely combinatorially;
in suggestive notation
\begin{eqnarray}
\label{eq:G}
\lambda(S) = \lambda(S_m/G) =  \lambda(S_m)^G,
\end{eqnarray}
and the Picard number of $S$ follows from \eqref{eq:b_2}.
Using the complete description of the Picard numbers of Fermat surfaces in positive characteristic,
we find the following two basic cases:

\begin{Lemma}
\label{lem:del}
Let the Delsarte surface $S$ be dominated by the Fermat surface of degree $m$.
Consider a prime $p\in\N$.
Then
\begin{itemize}
\item
$\rho(S\otimes\bar\F_p)=\rho(S\otimes\C)$ if $p\equiv 1 \mod m$.
\item
$\rho(S\otimes\bar\F_p)=b_2(S)$ if there is some $\nu\in\N$ such that $p^\nu\equiv -1 \mod m$.
\end{itemize}
\end{Lemma}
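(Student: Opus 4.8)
The plan is to reduce the statement to the combinatorial criteria already recorded for Fermat surfaces. Recall from \eqref{eq:G} that $\lambda(S)=\lambda(S_m)^G$, the sum over $\Gal(\Q(\mu_m)/\Q)$-orbits of those $\alpha\in\A_m$ which are both of Hodge type $(2,0)$ and $G$-invariant (in the sense $\prod_i\zeta_i^{a_i}=1$ for all $g\in G$), and that by \eqref{eq:b_2} we have $b_2(S)=\lambda(S)+\rho(S)$ in every characteristic $p\nmid m$. Thus it suffices to show that in each of the two cases the Lefschetz number does not change upon reduction, respectively drops to $0$; equivalently, that the set of $G$-invariant $\alpha\in\A_m$ whose $\mu_m^\times$-orbit never leaves Hodge type $(1,1)$ either is unchanged, or fills up all of the $G$-invariant part of $\A_m$.

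First I would treat the case $p\equiv 1\bmod m$. Here the order $o$ of $p$ in $(\Z/m\Z)^\times$ is $1$, so the criterion \eqref{eq:o} for $V(\alpha)$ to be algebraic over $\bar\F_p$ reads simply $|\alpha|=1$, i.e.\ it coincides verbatim with the characteristic-zero condition $|t\cdot\alpha|=1$ for all $t$ (the Galois orbit is forced into Hodge type $(1,1)$). Since the $G$-invariance condition $\prod_i\zeta_i^{a_i}=1$ is purely a statement about the character $\alpha$ and does not see the characteristic, the set of algebraic $G$-invariant eigenspaces over $\bar\F_p$ is literally the same combinatorial set as over $\C$. Hence $\lambda(S\otimes\bar\F_p)=\lambda(S\otimes\C)$, and \eqref{eq:b_2} (valid in both characteristics since $p\nmid m$ so $b_2$ is unchanged) gives $\rho(S\otimes\bar\F_p)=\rho(S\otimes\C)$. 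This is essentially Lemma \ref{lem:rho-Fermat} carried through the $G$-quotient, and it is the easy half.

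Next I would treat the case $p^\nu\equiv-1\bmod m$ for some $\nu$. The point is to show \emph{every} $G$-invariant $\alpha\in\A_m$ becomes algebraic over $\bar\F_p$, so that $\lambda(S\otimes\bar\F_p)=0$ and $\rho(S\otimes\bar\F_p)=b_2(S)$ by \eqref{eq:b_2}. Let $o$ be the order of $p$ mod $m$; from $p^\nu\equiv-1$ we get that $o$ is even, say $o=2\nu'$ with $p^{\nu'}\equiv-1\bmod m$, and the powers $p^0,\dots,p^{o-1}$ pair off as $p^j$ and $p^{j+\nu'}=-p^j$. Writing $\alpha$ via canonical representatives $0<b_i<m$, for any unit $s=p^j$ the representatives of $s\alpha$ and of $-s\alpha$ are complementary, $b_i'$ and $m-b_i'$ (using $a_i\ne 0$ so no representative is $0$, and $\sum a_i\ne 0$ so the triple is not all complementary to $0$), whence $|s\alpha|+|(-s)\alpha|=\lfloor\sum b_i'/m\rfloor+\lfloor\sum(m-b_i')/m\rfloor$; since $\sum b_i'\not\equiv 0\bmod m$ this sum equals exactly $2$. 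Summing over the $\nu'$ pairs gives $\sum_{j=0}^{o-1}|p^j\cdot\alpha|=o$, which is precisely the algebraicity criterion \eqref{eq:o} — and it holds for \emph{all} $\alpha\in\A_m$, a fortiori for the $G$-invariant ones. Therefore no non-algebraic $G$-invariant eigenspace survives, $\lambda(S\otimes\bar\F_p)=0$, and we conclude. (This also re-derives the supersingularity of Example \ref{ex:TT} by taking $G$ trivial.)

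The only mild subtlety — the step I would flag as needing the most care — is the bookkeeping with canonical representatives in the second case: one must be sure that for $s$ a unit, the representatives of $s\alpha$ and $-s\alpha$ really are $b_i'$ and $m-b_i'$ with none equal to $0$ or $m$, and that the hypotheses $a_i\ne 0$, $\sum a_i\ne 0$ defining $\A_m$ are exactly what makes $\sum b_i'\not\equiv 0\bmod m$ so that the two floor terms add to $2$ rather than $1$. Everything else is a direct translation of the already-established Fermat criteria \eqref{eq:o}, \eqref{eq:b_2} through the $G$-invariant part \eqref{eq:G}, requiring no new input.
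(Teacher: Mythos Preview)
Your argument is correct and is exactly the intended one: the paper does not spell out a proof of this lemma but presents it as an immediate consequence of the combinatorial description of algebraic eigenspaces on $S_m$ via \eqref{eq:o}, \eqref{eq:b_2} and \eqref{eq:G}, which is precisely what you carry out. The pairing $|s\alpha|+|(-s)\alpha|=2$ (using $\sum a_i\not\equiv 0\bmod m$) is the standard way to verify that \eqref{eq:o} holds automatically when $-1$ lies in the cyclic group generated by $p$, recovering Example~\ref{ex:TT} for $G$ trivial.
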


As before, for primes outside the above two cases, we have to go through the explicit computations to compare
Picard numbers in characteristic zero and $p$.
For an example, see the K3 surface in \ref{ss:K3-1} or the quintics in Section \ref{s:9-12}
which build on Example \ref{ex:55}.

\subsection{Picard numbers of complex Delsarte surfaces}

In \cite{S-quintic} we went through all quintic Delsarte surfaces
employing the approach of sections \ref{ss:Fermat} and \ref{ss:del}.
We singled out those surfaces with only isolated rational double points as singularities over $\C$
by requiring that the dominating Fermat surface $S_m$
contain four distinct $G$-invariant eigenspaces $V(\alpha)$ of Hodge type $(2,0)$.
Then we computed the Picard numbers of the minimal resolutions with the help of a machine
using \eqref{eq:b_2} and \eqref{eq:G}.
The results are summarized below:

\begin{Theorem}[Sch\"utt {\cite{S-quintic}}]
\label{thm:del}
Complex quintic Delsarte surfaces with only isolated rational double point singularities
attain exactly all the odd numbers between $1$ and $45$ as Picard numbers
except for $3,7,9,11,15$.
\end{Theorem}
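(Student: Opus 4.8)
The plan is to reduce the statement to a finite combinatorial computation over the (finitely many) quintic Delsarte surfaces and then extract the list of attainable Picard numbers. First I would recall that a quintic Delsarte surface is defined by four monomials of degree $5$ in four variables, so the exponent matrix is a $4\times 4$ matrix with nonnegative integer row sums equal to $5$; up to the rescaling of Remark \ref{rem:scale} and permutation of coordinates, there are only finitely many such matrices, and hence only finitely many geometric types to inspect. For each, the method of \cite{Sh} summarised in \ref{ss:del} produces an integer $m$ and a subgroup $G\subset\mu_m^3$ such that $S$ is birational to $S_m/G$, computable directly from the cofactor matrix of the exponent matrix.

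Next I would impose the rational-double-point condition exactly as in \cite{S-quintic}: discard every exponent matrix for which the associated $S_m/G$ does \emph{not} have four distinct $G$-invariant eigenspaces $V(\alpha)$ of Hodge type $(2,0)$, since by Noether's formula a quintic (hence its RDP-resolution) has $p_g=4$, and this is precisely the condition ensuring the singularities are isolated rational double points over $\C$. For each surviving matrix I would then compute the Lefschetz number combinatorially via \eqref{eq:G}, namely $\lambda(S)=\lambda(S_m)^G$: run over $\alpha\in\A_m$ with $|\alpha|\in\{0,2\}$, keep those satisfying the $G$-invariance relation $\prod_i\zeta_i^{a_i}=1$, group into $\Gal(\Q(\mu_m)/\Q)$-orbits that never change Hodge type, and sum orbit sizes. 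Then $\rho(S)=b_2(S)-\lambda(S)=53-\lambda(S)$ by \eqref{eq:b_2}, where $b_2=53$ comes from the invariants listed in \ref{ss:2.2}.

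Having the full list of pairs (exponent matrix, $\rho$), I would simply read off which values of $\rho$ occur. Since $b_2=53$ is odd and $\lambda(S)$ is always even (the eigenspaces of Hodge type $(2,0)$ and $(0,2)$ are exchanged by complex conjugation, so they come in pairs, and the $G$-invariant ones do too), every such $\rho$ is odd, consistent with the claimed range. The content of the theorem is then the assertion that the set of attained values is exactly $\{1,5,13,17,19,21,\dots,45\}$, i.e. all odd numbers in $[1,45]$ except $3,7,9,11,15$; this is verified by exhibiting, for each value in the claimed set, at least one explicit exponent matrix realising it, and by confirming that no matrix in the finite list produces $3$, $7$, $9$, $11$ or $15$.

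The main obstacle is the scale of the enumeration: the number of degree-$5$ exponent matrices (even after modding out by permutations and the rescaling of Remark \ref{rem:scale}) is large, and for several of them the dominating Fermat degree $m$ can be sizeable, making the orbit bookkeeping under $(\Z/m\Z)^\times$ the genuinely heavy part of the argument. This is exactly where a machine computation is indispensable, as was carried out in \cite{S-quintic}; the theoretical steps above are routine once one trusts the implementation of the $m$-and-$G$ recipe of \cite{Sh} and of the Hodge-type count. The only subtlety requiring care is the RDP filter — one must make sure the four $G$-invariant $(2,0)$-eigenspaces really do correspond to worst-case singularities that are rational double points and not something forcing a non-minimal or more singular model — but this is handled by the standard analysis of the quotient singularities of $S_m/G$ and causes no difficulty in practice.
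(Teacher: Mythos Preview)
Your proposal is correct and follows precisely the approach the paper summarizes: the theorem is quoted from \cite{S-quintic}, and the paragraph preceding it sketches exactly the finite enumeration you describe --- list the quintic Delsarte exponent matrices, impose the RDP filter via the requirement of four distinct $G$-invariant $(2,0)$-eigenspaces, compute $\lambda(S)=\lambda(S_m)^G$ combinatorially and then $\rho(S)=53-\lambda(S)$, and read off the attained values with machine assistance. Your parity remark (that $-1\in(\Z/m\Z)^\times$ acts without fixed points on eigenspaces of Hodge type $(2,0)$, forcing each transcendental orbit to have even size and hence $\lambda(S)$ even) is a clean explanation of why only odd $\rho$ arise, which the paper itself does not make explicit.
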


This covers a good portion of the Picard numbers
claimed in Theorem \ref{thm},
but no even ones at all.
Yet these Delsarte surfaces will form the crucial starting points for our considerations
thanks to Lemma \ref{lem:del}
which gives us complete control over the Picard numbers of the reductions.
In the next section, this will be combined with the technique of arithmetic deformations
in order to engineer quintics with the  Picard numbers
missing to prove Theorem \ref{thm}.
Meanwhile we end this section by commenting on recent progress
on the Picard numbers of complex Delsarte surfaces of arbitrary degree.

Fermat surfaces admit a closed formula for the Picard number,
depending only on the degree (including some congruence conditions, see \cite{Sh-Fermat}).
It is a non-trivial result that the same can be achieved for Delsarte surfaces.
Here the first subtlety to overcome is the problem
whether the singularities are worse than isolated rational double points.
For quintics, this was solved by computing the geometric genus of the Delsarte surfaces
through invariant eigenspaces of Hodge type $(2,0)$ in \cite{S-quintic},
but recently Heijne found a geometric way around this for general $d$ \cite{Heijne}.
Using this, he reduced the analysis to 83 Delsarte surfaces (up to isomorphism).
For these, Heijne developed a computer code to produce a closed formula for the Picard number,
based on his thesis work on elliptic Delsarte surfaces.

\section{Arithmetic deformations}
\label{s:af}

This section introduces the main technique of this paper: arithmetic deformations.
We will illustrate the method by producing a plentitude of K3 surfaces of Picard number one.
Then we will set the scene for applying the technique to quintics in $\PP^3$
in order to prove Theorem \ref{thm}.

\subsection{Set-up}

The input of the proposed method consists of a surface $S$ over some number field $K$ (ideally $\Q$)
such that we can control the reductions modulo prime ideals $\p$.
Our prototype surfaces will be Delsarte surfaces.
The analysis from \ref{ss:del} allows us to determine those primes $\p$
where
\begin{eqnarray}
\label{eq:=}
\rho(S) = \rho(S_\p)
\end{eqnarray}
(compare Lemma \ref{lem:del}).
Now we deform $S$ into another surface $\mS$ such that the reduction at $\p$ is smooth
and isomorphic to $S_\p$.
This construction comes with an upper bound for the Picard number of $\mS$ by \eqref{eq:spec} and \eqref{eq:=}
\begin{eqnarray}
\label{eq:<=}
\rho(\mS) \leq \rho(\mS_\p) = \rho(S_\p) = \rho(S).
\end{eqnarray}

\textbf{Convention:}
For ease of notation, we will often use the same letter for a single deformation and 
a whole family of deformations.
In what follows, the letter $\mS$ will be reserved for quintics (deforming $S$), $\mX$ for K3 surfaces
(deforming $X$)
and $\mY$ for other surfaces (deforming $Y$).

\subsection{Arithmetic deformation}

We emphasise that if $S$ arises from some quintic in $\PP^3$ by resolving isolated rational double point singularities,
then the deformation $\mS$ will essentially require to preserve the singularities
if we work with the quintic polynomial defining $S$.
This forms a serious obstruction to the possible deformations.
(For two exceptions, see \ref{ss:12}, \ref{ss:def-1}.)

We now come to the constructive nature of the arithmetic deformations.
Namely, if we want to engineer a surface with Picard number $r$,
then we are bound to deform $S$ in such a way
that $r$ independent divisor classes deform to $\mS$.
(If there are singularities involved,
then the exceptional divisors thus can be viewed as constructive and obstructing at the same time.)
By construction, we thus get a lower bound for the Picard number of $\mS$:
\begin{eqnarray}
\label{eq:=>}
\rho(\mS)\geq r.
\end{eqnarray}
In practice, it remains to bridge the gap between lower and upper bound.
We start with the most instructive case where $r=\rho(S)-1$.
Then it is often possible to prove $\rho(\mS)=r$ with the help of Corollary \ref{cor:lift}
(if it applies, for instance for $K=\Q$ and $\p\neq 2\Z$).
To this end,
all we have to do is deform $S$ to $\mS$ in such a way that 
some given divisor class does definitely \emph{not} lift.
In practice, given an explicit curve on $\mS_\p$,
this often amounts to checking that the curve does not lift to second or third order deformations;
for quintics, we will develop some conceptual lifting criteria along these lines in the next section.

Below we give  a brief summary of the arithmetic deformation technique for decreasing the Picard number by one:

\begin{Technique}[Picard number drop by one]
\label{tech-1}
Given an algebraic surface $S$ over some number field
and a prime $\p$ such that $\rho(S)=\rho(S_\p)$,
proceed as follows:
\begin{enumerate}
\item
Deform $S$ $\p$-adically to some surface $\mS$ such that $\mS_\p\cong S_\p$.
\item
Preserve enough divisor classes such that $\rho(\mS)\geq \rho(S)-1$.
\item
Control the deformation of a single divisor class to infer $\rho(\mS)=\rho(S)-1$.
\end{enumerate}
\end{Technique}
We will comment on how to achieve this for quintics in Section \ref{s:lift}.
Below we demonstrate the arithmetic deformation technique
by exhibiting K3 surfaces of Picard number one in abundance.
In \ref{ss:Gal}, we will also explain how it is sometimes possible
to engineer arithmetic deformations of Picard numbers 
provably undermatching the original $\rho(S)$ by more than one
(Technique \ref{tech-2}, see also Technique \ref{tech-3}).

\subsection{K3 surfaces with Picard number one}
\label{ss:K3-1}

To illustrate the arithmetic deformation technique, we 
shall take a little detour from our investigations of quintics
and construct K3 surfaces with Picard number one in abundance.
These surfaces which formed the starting point of all our investigations
were constructed together with Ronald van Luijk in 2010.

Consider the K3 surface $X$
which is given as an affine double sextic
\[
X:\;\;\; w^2 = x^5+xy^5+1.
\]
One easily finds that $X$ admits a rational Galois cover by the Fermat surface $S_{25}$. % was: of degree 50.
Hence one can compute the Lefschetz and Picard number of $X$ in terms of invariants of $S_{25}$.
Over $\C$ one finds
\[
\rho(X\otimes\C)=2.
\]
Here $\Pic(X\otimes\C)$ is generated by the two components of the pre-image of the 
tritangent line $L=\{x=0\}\subset\PP^2$.
In fact, the same Picard number holds for the reduction $X_p$ 
at any prime $p\equiv 1\mod 5$ using the arguments from \ref{ss:Fermat}, \ref{ss:del}
(compare Lemma \ref{lem:del}). %(IMPROVED!).
Now we  deform $X$ to some double sextic $\mX$ over $\Q$
such that the reduction modulo $p$ is  $X$:
\[
\mX_p = X_p.
\]
Then $\rho(\mX)\leq 2$ by the specialisation embedding \eqref{eq:spec},
and by Theorem \ref{thm:prim} equality can only hold if \emph{all} divisors from $X_p$ lift to $\mX$.
Presently this concerns the two rational curves above $L$
where one can work out the obstructions to lifting explicitly (cf.~\cite{EJ}).
Here one first uses Riemann-Roch to prove that the divisor class of each curve, if it were to lift, 
lifts to an effective divisor which thus again lies over a tritangent.
For instance one finds using second order deformations  that the rational curves do not lift to
the arithmetic deformation
\[
\mX: \;\;\; w^2 = x^5+xy^5+py^4+1.
\]
Including further deformation terms, we find the following families of K3 surfaces of Picard number one:
\begin{Theorem}[Sch\"utt, van Luijk]
\label{thm:K3}
Let $p\equiv 1\mod 5$ and $f\in\Q[x,y]$ of degree at most $6$
with all coefficients $p$-adically integral and the coefficient of $y^4$ a $p$-adic unit.
Then the K3 surface $\mX$ given affinely by
\[
\mX: \;\;\; w^2 = x^5+xy^5+1+pf(x,y)
\]
has Picard number $\rho(\mX)=1$.
\end{Theorem}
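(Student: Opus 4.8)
The plan is to apply Technique \ref{tech-1} with $S=X$ the K3 surface $w^2=x^5+xy^5+1$ and $\mathcal X$ the proposed arithmetic deformation, the prime $p\equiv 1\bmod 5$ playing the role of $\mathfrak p$. First I would record, as already stated, that $X$ admits a rational Galois cover by the Fermat surface $S_{25}$ and that the combinatorial machinery of \ref{ss:Fermat}--\ref{ss:del} (in particular Lemma \ref{lem:del}) gives $\rho(X\otimes\C)=2$ and, since $p\equiv 1\bmod 25$? --- actually only $p\equiv 1\bmod 5$ is assumed, so I would instead invoke that $X$ is dominated by $S_{25}$ only after noting the cover factors, so that the relevant modulus is $5$ and Lemma \ref{lem:del} (first bullet) yields $\rho(X_p)=\rho(X\otimes\C)=2$ for $p\equiv 1\bmod 5$. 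The two classes realising $\rho=2$ are the components of the preimage of the tritangent $L=\{x=0\}$. Since $\mathcal X$ is visibly a deformation of $X$ that is congruent to it modulo $p$, i.e. $\mathcal X_p\cong X_p$, and $\mathcal X$ is defined over $\Q$ with $p>2$ of good reduction, Theorem \ref{thm:prim} applies and the specialisation $\NS(\mathcal X)\hookrightarrow\NS(\mathcal X_p)=\NS(X_p)$ is primitive; combined with \eqref{eq:rho-p} this gives $\rho(\mathcal X)\le 2$, with equality only if \emph{both} generators of $\NS(X_p)$ lift to $\mathcal X$.

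The core of the argument is therefore to show that these two rational curves over $L$ do \emph{not} lift, so that Corollary \ref{cor:lift} forces $\rho(\mathcal X)<2$, hence $\rho(\mathcal X)=1$ (the hyperplane class always lifts, giving the lower bound $\rho(\mathcal X)\ge 1$). Following the strategy indicated in the text and in \cite{EJ}, I would first use Riemann--Roch on the K3 surface to argue that if either curve class lifted, it would lift to an effective divisor, which by degree and arithmetic-genus considerations must again be a rational curve lying over a tritangent line of the branch sextic of $\mathcal X$. Thus lifting the divisor class is equivalent to the existence of a tritangent line to the branch curve of $\mathcal X$ reducing mod $p$ to $L=\{x=0\}$. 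I would then set up the deformation-theoretic obstruction: write a candidate tritangent as $x=p\,g(y)+O(p^2)$ (the general line reducing to $L$), substitute into the condition that $x^5+xy^5+1+pf(x,y)$ restricted to the line be a perfect square in $y$, and expand in powers of $p$. The first-order term reproduces $L$ itself; the genuine constraint appears at second order, where the coefficient of $y^4$ in $f$ enters. The hypothesis that this coefficient is a $p$-adic \emph{unit} is precisely what makes the second-order obstruction non-vanishing, so no lift exists.

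The main obstacle, and the step I expect to require the most care, is exactly this obstruction computation: one must verify that the second-order deformation equation for a tritangent over $L$ has no solution whenever the $y^4$-coefficient of $f$ is a unit, uniformly over all $f$ of degree $\le 6$ with $p$-integral coefficients. Concretely this means expanding $\left(x^5+xy^5+1+p f\right)\big|_{x=pg(y)}$ to order $p^2$, comparing with the square of a degree-$\le 3$ polynomial in $y$ (the would-be half of the branch restriction), and checking that matching coefficients forces a contradiction modulo $p$ driven by the $y^4$ term --- the point being that $y^4$ is the one monomial in $f$ that cannot be absorbed by adjusting $g$ or by the freedom already present in $X$. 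I would carry this out once for the model $\mathcal X:\ w^2=x^5+xy^5+py^4+1$, as in the excerpt, and then observe that adding any further $p$-divisible terms of degree $\le 6$ only perturbs the obstruction by higher-order or non-$y^4$ contributions and hence cannot create a lift; this extends the conclusion to the full family and completes the proof.
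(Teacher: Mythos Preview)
Your proposal is correct and follows essentially the same approach as the paper: reduce to showing the two tritangent components over $L=\{x=0\}$ do not lift, use Riemann--Roch to force any lift to lie over a tritangent line reducing to $L$, and then compute the order-$p^2$ obstruction to see that the $y^4$-coefficient of $f$ being a unit kills the lift. The only point where you are slightly looser than the paper is the justification that $p\equiv 1\bmod 5$ (rather than $\bmod\,25$) suffices for $\rho(X_p)=2$; this does require going back to the eigenspace criterion \eqref{eq:o} for the specific $G$-invariant characters rather than citing Lemma~\ref{lem:del} directly, but the paper itself only gestures at this (``using the arguments from \ref{ss:Fermat}, \ref{ss:del}'').
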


%
%Question: Can we use such explicit deformations
%to somehow produce K3's over $\Q$ with $\rho=1$ and dense rational points?

\subsection{Galois-equivariance}
\label{ss:Gal}

We conclude this section with a  discussion how one can engineer an arithmetic deformation $\mS$
of a given (Delsarte) surface $S$ such that
\[
\rho(\mS)<\rho(S)-1.
\]
To this end, we take up an idea from \cite{EJ-Gal} based on the fact that the specialisation embedding \eqref{eq:spec}
is Galois-equivariant.
In consequence, if some divisor $D$ lifts from $\mS_\p$ to $\mS$,
then we deduce that each Galois module of $\Pic(\mS_\p)$ over $\F_\p$ to which $D$ contributes
lifts to $\mS$.
Conversely, to prove that some sublattice $M\subset\Pic(\mS_\p)$ does not lift to $\mS$,
it suffices to prove the according statement for each irreducible Galois-submodule of $M$.
Thus we find:

\begin{Technique}%[Picard number drop by one]
\label{tech-2}
Given an algebraic surface $S$ over some number field
and a prime $\p$ such that $\rho(S)=\rho(S_\p)$,
proceed as follows:
\begin{enumerate}
\item
Deform $S$ $\p$-adically to some surface $\mS$ such that $\mS_\p\cong S_\p$.
\item
Preserve some divisor classes such that 
they generate a Galois-invariant sublattice $L$ of $\Pic(\mS_\p)$ of corank $r\in\N$.
\item
Control the deformation of all irreducible Galois-submodules of 
the quotient $\Pic(\mS_\p)/L$ to infer $\rho(\mS)=\rho(S)-r$.
\end{enumerate}
\end{Technique}

The central point about the reduction to the irreducible Galois-submodules
is that it sometimes allows us to break down the lifting problem to investigations
of a few explicit divisor classes (the generators of the irreducible  Galois-modules)
-- each of which thus might be answered by Technique \ref{tech-1}.
Crucially we will use in this context that we can impose certain Galois actions on the Picard group
of a Delsarte surface
simply by scaling the monomials in the equation (cf.~Remark \ref{rem:scale}).
Later we will employ a similar technique using automorphisms (see \ref{ss:34}).

\section{Lifting divisors on quintics}
\label{s:lift}

We want to apply the arithmetic deformations technique to quintics.
Its input consists of a quintic $S$ over some number field $K$
and the smooth reduction $S_\p$ at some prime $\p$.
In view of Theorem \ref{thm:prim},
the basic problem is to decide whether a given divisor $D_\p$ 
on $S_\p$ of lifts to $S$.
In practice, we will always work with an effective divisor $D_\p$.
There are a few subtleties.
First $S$ may involve the minimal resolution of singularities of a quintic $S_0$ in $\PP^3$:
\[
\pi: S \to S_0\subset\PP^3.
\]
Usually, we suppress this distinction,
but in the following paragraphs it will be relevant.
Similarly, we will call different curves a line:
 both
a line  $\ell\subset S_0\subset\PP^3$ in the honest sense
and its strict transform $\tilde\ell\subset S$.
In contrast, for a hyperplane section, we will ambiguously often only refer to $H$,
but take $\pi^* H$ for a hyperplane section on $S_0$
(which will not be ample on $S$).

Except for \ref{ss:12}, \ref{ss:6}
we will always choose our deformations in such a way that the singularities in $\PP^3$ are preserved 
(so that in particular there is no problem with good reduction).
In other words, all exceptional curves will lift automatically.
Therefore we can always make sure that 
the support of $D_\p$ does not contain any exceptional curves,
but we can also add exceptional curves at our leisure.
Note that no non-constant function on $S$ is supported solely on the exceptional curves,
since otherwise we would obtain a non-trivial relation between them in $\Pic(S)$.
%In consequence, we have for any divisor $D$ on $S_0$
%that its strict transform $\tilde D$ on $S$ (taken componentwise) satisfies
%\[
%h^0(\tilde D) = h^0(\pi^* D),
%\]
%and there is no ambiguity in $h^0$ when we simply talk about the divisor $D$.
It is a consequence of this fact
that subtraction, and incidentally addition of exceptional curves, 
will enable us to decide on the lifting exclusively on the singular quintic model $S_0\subset\PP^3$.
%since we have a codimension one problem 
%which is not affected by omitting a finite number of points.
Along these lines we employ the convention
to take the degree of $D_\p$
as the degree of its push-down in $\PP^3$:
\[
\deg(D_\p) = %D_\p.H = 
D_\p.\pi^* H = \pi_* D_\p.H.
\]

The second subtlety is to distinguish between lifting the divisor $D_\p$
and the divisor class of $D_\p$ in $\Pic(S_\p)$.
In this context, the crucial issue is whether $D_\p$ lifts to an effective divisor.
In general situations, this need not be the case,
but presently we can prove it for effective divisors of small degree without too much difficulty using Riemann-Roch
for a divisor $D$ on a smooth algebraic surface $S$:
\[
\chi(D) = \chi(\OO_S) + (D^2-D.K_S)/2.
\]
%Incidentally, this will also imply that the divisor $D_\p$ itself lifts:

\begin{Proposition}
\label{prop:lift}
In the above setup, assume that the effective divisor $ \mathfrak D$ on $S_\p$
has  $\deg( \mathfrak D)\leq 2$.
If the divisor class of $ \mathfrak D$ lifts to $\Pic(S)$,
then $ \mathfrak D$ lifts to a unique (effective) divisor on $S$.
\end{Proposition}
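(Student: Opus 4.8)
The plan is to use Riemann--Roch on the smooth surface $S$ together with the bound on the self-intersection of a degree $\leq 2$ divisor, and then deduce uniqueness from the fact that such an effective divisor must be rigid (no moving part). First I would reduce to the singular model: by the discussion preceding the statement, we may assume the support of $\mathfrak D$ contains no exceptional curves, and since lifting of the class is decided on $S_0\subset\PP^3$ (the exceptional curves always lift), it suffices to treat $\mathfrak D$ as an effective divisor whose push-down to $\PP^3$ has degree $\deg(\mathfrak D)=\mathfrak D.\pi^*H\leq 2$. So $\mathfrak D$ is either a line, a conic, a pair of lines, or a single point's worth of such configurations together with exceptional curves (which we may strip off and re-add).

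Next I would write $D$ for a lift of the divisor class of $\mathfrak D$ to $\Pic(S)$ (given by hypothesis) and compute $\chi(D)$ via Riemann--Roch, $\chi(D)=\chi(\OO_S)+(D^2-D.K_S)/2$. For a quintic, $\chi(\OO_S)=p_g+1=5$ and $K_S=\pi^*H$ (up to exceptional contributions which pair to zero against $D$), so $D.K_S=\deg(\mathfrak D)\leq 2$. The self-intersection $D^2$ is bounded: for an effective divisor of degree $d\leq 2$ on a surface with $K_S$ nef and big, one has $D^2\leq$ a small constant (a line on a quintic has self-intersection $-3$, a conic $-2$ or so, by adjunction $D^2=2p_a(D)-2-D.K_S$). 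In any case $D^2-D.K_S\geq -6$ roughly, so $\chi(D)\geq 5-3=2>0$. Then I would invoke Serre duality: $h^2(D)=h^0(K_S-D)$, and since $\deg(K_S-D)=5-\deg(\mathfrak D)>0$ while $K_S-D$ cannot be effective (an effective divisor of degree $\leq 3$ in $\PP^3$ lying in $|K_S-D|$ would force $\mathfrak D+ \text{(something effective)}\sim K_S$, which one rules out because $K_S=\pi^*H$ restricted appropriately is a hyperplane and a hyperplane on a quintic does not decompose that way for degree reasons), we get $h^2(D)=0$. Hence $h^0(D)\geq\chi(D)>0$, so the class of $\mathfrak D$ is represented by \emph{some} effective divisor $D'$ on $S$.

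It then remains to see that $D'$ reduces to $\mathfrak D$ and is unique. For uniqueness I would argue that $h^0(D)=1$: any effective divisor of degree $\leq 2$ on a quintic moves in at most a $0$-dimensional linear system, since a pencil of such would give a map to $\PP^1$ with fibers of $\PP^3$-degree $\leq 2$, i.e.\ the quintic would be covered by a family of lines or conics through a fixed locus, contradicting that a smooth (or RDP) quintic is of general type with ample enough canonical class — more concretely, $D.K_S\leq 2$ and $D$ moving would force $D^2\geq 0$, but then $\chi(D)\geq 5$, and one checks the corresponding linear systems of lines/conics on a quintic are finite. With $h^0(D)=1$, the unique effective divisor $D'$ in the class, reduced mod $\p$, lies in the $1$-dimensional space $H^0(S_\p,\mathcal O(D'_\p))$ which also contains $\mathfrak D$ (same class, by Theorem \ref{thm:prim} the specialisation is primitive so the class reduces correctly), hence $D'_\p=\mathfrak D$.

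The main obstacle I expect is the bookkeeping in the middle step: ruling out $h^2(D)=h^0(K_S-D)\neq 0$ cleanly, and correctly accounting for the exceptional curves when I pass between $S$ and $S_0$ (the relations $K_S=\pi^*H$ and $D.K_S=\deg\mathfrak D$ hold only after one has arranged the support of $\mathfrak D$ to avoid exceptional curves and used that the exceptional curves satisfy $E.\pi^*H=0$). Getting the precise numerical bound on $D^2$ for all the degree $\leq 2$ cases — line, double line, smooth conic, and the degenerate ones — is routine via adjunction but needs to be done case by case.
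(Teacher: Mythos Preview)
Your argument has a genuine gap at the step where you claim $h^2(D)=h^0(K_S-D)=0$. This is false. For a line $\mathfrak L$ on a quintic, $K_S-\mathfrak L=\pi^*H-\mathfrak L$, and $|H-\mathfrak L|$ is exactly the pencil of hyperplane sections through $\mathfrak L$, so $h^0(H-\mathfrak L)=2$; for a plane conic one gets $h^0(H-\mathfrak D)=1$. Your reasoning that ``a hyperplane on a quintic does not decompose that way'' is backwards: any hyperplane containing $\mathfrak L$ cuts out $\mathfrak L$ plus a residual quartic, which is precisely an effective member of $|H-\mathfrak L|$. With $h^2\neq 0$, the inequality $\chi(D)>0$ no longer yields $h^0(D)>0$ on its own.

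The paper's repair is to compute $h^2(\mathfrak D)=h^0(H-\mathfrak D)$ explicitly on the special fibre (as above) and then invoke upper semi-continuity in mixed characteristic for the line bundle $K_S-D$: since $(K_S-D)_\p=K_{S_\p}-\mathfrak D$, one has $h^0(K_S-D)\leq h^0(K_{S_\p}-\mathfrak D)$, hence
\[
h^0(D)-h^1(D)=\chi(D)-h^2(D)\geq \chi(\mathfrak D)-h^2(\mathfrak D)\geq 1
\]
in each case. This is the missing idea.

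Your uniqueness argument also diverges from the paper's and is not airtight as written (a linear system with $h^0\geq 2$ can have a fixed part, so ``$D$ moving forces $D^2\geq 0$'' needs more care). The paper instead takes any effective lift $D$ and computes $D_\p.\mathfrak D=\mathfrak D^2<0$, forcing $D_\p$ and $\mathfrak D$ to share a component; iterating over components gives $D_\p=\mathfrak D$. This also covers a case you did not isolate: two lines on $S_\p$ that are coplanar in $\PP^3$ but skew on the resolution because they meet at a node of $S_0$. There one must first augment $\mathfrak D$ by the chain of exceptional curves connecting the two strict transforms before the Riemann--Roch numerics go through.
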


\begin{proof} 
The proof crucially uses the following two properties of smooth specialisation:
\begin{itemize}
\item
the Euler characteristic is constant upon smooth specialisation
(so we can use Riemann-Roch for $ \mathfrak D$);
\item
the dimensions $h^i(\cdot)$ are upper-semicontinuous in mixed characteristic
(in fact, we will only need this property for $h^0(\cdot)$ which is immediate).
\end{itemize}
We will always assume that $\D$ does not contain any exceptional curves in its support
(since these lift anyway by construction);
in one case, however, we will have to add exceptional curves to $\D$ to 
make our arguments work to prove 
Proposition \ref{prop:lift}.

We start by considering $\deg( \mathfrak D)=1$, that is, $ \mathfrak D$ corresponds to a line $ \mathfrak L$ in $\PP^3$.
By adjunction, using $K_S=H$ we find $ \mathfrak D^2=-3$.
Since $\chi(\OO_S)=5$, Riemann-Roch gives 
\[
\chi( \mathfrak D) = h^0( \mathfrak D)-h^1( \mathfrak D)+h^2( \mathfrak D) = 3.
\]
By Serre duality, we have 
\[
h^2( \mathfrak D) = h^0(K- \mathfrak D)  = h^0(H- \mathfrak L)=2
\]
since the linear system $|H- \mathfrak L|$ is generated by the pencil of planes through the line $ \mathfrak L$.
Now we assume that $ \mathfrak D$ lifts to some divisor class $D\in\Pic(S)$.
Using upper semi-continuity, we find
\[
h^0(D)-h^1(D) = \chi(D) - h^2(D) = 3 - h^0(H-D) \geq 3 - h^0(H - \mathfrak D) =1,
\]
so in particular $h^0(D)>0$, i.e. $D$ is effective.
With degree one, $D$ can only correspond to a line $L$ on $S$.
Its reduction on $S_\p$ satisfies
\[
L_\p. \mathfrak L =  \mathfrak D^2=-3.
\]
Since the intersection number is negative,
the line $L$ lifts $ \mathfrak L$ uniquely to $S$.
This proves Proposition \ref{prop:lift} in degree one.

Let $\deg( \mathfrak D)=2$. Then there are three cases:
\begin{enumerate}
\item
\label{it-1}
$ \mathfrak D$ is an irreducible (plane) conic or the sum of two distinct (planar) lines which intersect on $S$;
\item
\label{it-2}
$ \mathfrak D$ is the sum of two skew lines on $S$;
\item
\label{it-3}
$ \mathfrak D$ is twice the same line.
\end{enumerate}
By Theorem \ref{thm:prim}, case \eqref{it-3} reduces to the degree one case.

In case \eqref{it-1} we have $ \mathfrak D^2=-4$ by adjunction.
Riemann-Roch gives $\chi(\mathfrak D)=2$
while $h^2(\mathfrak D) = h^0(H - \mathfrak D)=1$
since $|H-\mathfrak D|$ is generated by the unique plane containing $\mathfrak D$.
As above, we conclude that any lift $D\in\Pic(S)$ of $\mathfrak D$ is effective,
and since $D_\p.\mathfrak D=-4$,
the divisors $D_\p, \D$ have a common component. 
If $D$ is irreducible, then we are done here.
If $D$ is reducible,
we can subtract a common component (a line)
to find that also the other component is common of square $-3$.
In any case, this shows that
 the divisor $D$ uniquely lifts $\mathfrak D$ to $S$.

The proof for case \eqref{it-2}  involves a little subtlety.
We have $\mathfrak D^2=-6$ and $\chi(\mathfrak D)=1$.
If the two lines are not planar in $\PP^3$,
we can argue completely analogously to case \eqref{it-1}
with the adjustment that $h^0(H-\D)=0$. % since $\D$ is not planar.
If the lines are planar in $\PP^3$, however,
intersecting in a node of $S_0$,
then the above argument fails.
Instead we modify the divisor $\D$ as follows:
consider the chain of exceptional curves $E_1,\hdots,E_r$
in the fiber of $\pi$ above the node
connecting the components met by the  strict transforms of the two lines.
Let
\[
\D' = \D + E_1+\hdots + E_r.
\]
Then $\D'^2=-4$ by definition, so Riemann-Roch gives $\chi(\D')=2$ as in case \eqref{it-1}.
Note that $H-\D'$ is effective
as an easy calculation reveals
for any hyperplane $H\subset S_0$ through a node
that $\pi^* H$ has each exceptional divisor above the node in its support.
Hence we obtain the first inequality of 
\[
1\leq h^0(H-\D')
\leq h^0(H-\D) = 1.
\]
The last equality holds,
since $|H-\D|$ is generated by the unique plane in $\PP^3$ containing both lines
as there is no non-constant function on $S$ supported on the exceptional curves.
In conclusion, $h^2(\D')=1$, and we infer that any lift $D'$ of $\D'$ is effective as before. 
Since $D_\p'.\D'=-4$,
we can proceed to show subsequently
that any irreducible component of $D'$ uniquely lifts some components 
of $\D'$ (since squares stay negative throughout substracting common components).
In particular, $\D$ has also a unique lift on $S$.
\end{proof}

\begin{Remark}
The same arguments go through for irreducible cubic curves,
that is, smooth or singular plane cubics and twisted cubics in $\PP^3$.
For reducible effective divisors of degree $3$, however,
there are cases with few intersections (and thus small square entering in the Riemann-Roch formula)
which require special treatment.
\end{Remark}

\subsection{Application: deformation of two planar lines}
\label{ss:2lines}

Proposition \ref{prop:lift} 
puts us in the position to often decide on the lifting of some effective divisor $\D$ on $S_p$
on the singular model $S_0\subset\PP^3$.
We shall illustrate this by
explicitly deforming two planar lines
to a quadric. 
Throughout we retain the set-up and notation from the previous paragraphs.

\begin{Corollary}
\label{cor:2lines}
Assume that there is some hyperplane $\mathfrak H\subset S_\p$ 
which splits as
\[
\mathfrak H = \mathfrak L_1 + \mathfrak L_2 + C_\p + (\text{exceptional curves}),
\]
where $\mathfrak L_1, \mathfrak L_2$ are lines and $C_\p$ is a cubic without multiple components lifting to $S$.
Denote the lift by $C$ and the unique hyperplane in $\PP^3$ containing $C$ by $H$.
Then either line lifts to $S$ if and only if the quadric residual to $C$ in $H\cap S$ is reducible.
\end{Corollary}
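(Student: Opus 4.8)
The plan is to reduce the lifting question for $\mathfrak L_1,\mathfrak L_2$ to the geometry of curves lying in the single plane $H$, and then to feed this into Proposition \ref{prop:lift}. Throughout I may assume $\mathfrak L_1\neq\mathfrak L_2$ (otherwise there is nothing to prove beyond the degree-one case), and I read both ``reducible'' and ``lifts'' over $\bar\Q$, in keeping with the disclaimer about geometric Picard numbers.

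The first step is to identify the reduction of the residual conic. Since the lift $C$ reduces to the reduced plane cubic $C_\p$, the space of linear forms on $\PP^3$ vanishing on $C$ has dimension at most that of the same space for $C_\p$, which is one (a reduced plane cubic spans its plane uniquely); so $C$ is a plane cubic, the plane $H$ is well defined, and its reduction $H_\p$ is the unique plane spanned by $C_\p$, i.e.\ the plane underlying $\mathfrak H$. Writing $\pi^*H = C + Q + (\text{exceptional curves})$ on $S$, where $Q$ now denotes the strict transform of the residual conic, and comparing the reduction of this identity with the given decomposition $\mathfrak H = \mathfrak L_1 + \mathfrak L_2 + C_\p + (\text{exceptional curves})$, unique factorisation into irreducible components (lines versus exceptional curves) forces $Q$ to reduce \emph{exactly} to $\mathfrak L_1 + \mathfrak L_2$, with no exceptional curve in its support. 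In particular $Q$ is an effective divisor over $\Q$, free of exceptional curves, reducing to $\mathfrak L_1+\mathfrak L_2$; hence the class $[\mathfrak L_1+\mathfrak L_2]$ always lifts to $S$.

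Now the two implications. If the residual conic is reducible, write $Q=\ell_1+\ell_2$ with $\ell_i$ lines; each $\ell_i$ lies on $S_0$, being a component of the plane section $H\cap S_0$, and reducing gives $\ell_{1,\p}+\ell_{2,\p}=\mathfrak L_1+\mathfrak L_2$, a bijective matching since $\mathfrak L_1\neq\mathfrak L_2$, so each $\mathfrak L_i$ is the reduction of the line $\ell_i$ on $S$ and thus lifts. Conversely, suppose $\mathfrak L_1$ lifts. By Proposition \ref{prop:lift} in degree one it lifts to a unique effective divisor, a line $L_1$. As $[\mathfrak L_1+\mathfrak L_2]$ lifts (previous paragraph) and $[\mathfrak L_1]$ lifts, and the specialisation embedding \eqref{eq:spec} is a homomorphism, the class $[\mathfrak L_2]=[\mathfrak L_1+\mathfrak L_2]-[\mathfrak L_1]$ lifts as well; so by Proposition \ref{prop:lift} again $\mathfrak L_2$ lifts to a unique effective line $L_2$. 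Then $L_1+L_2$ and $Q$ are both effective divisors on $S$, containing no exceptional curves and reducing to $\mathfrak L_1+\mathfrak L_2$, so by the uniqueness clause of Proposition \ref{prop:lift} in degree two (case \eqref{it-1} or \eqref{it-2}) they coincide. Pushing down to $\PP^3$, the residual conic becomes $\pi_*L_1+\pi_*L_2$, a pair of lines, so it is reducible.

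The step I expect to be the real point is the exceptional-curve bookkeeping in the first paragraph above, namely the claim that the strict transform of the residual conic acquires no exceptional component upon reduction even when $\mathfrak L_1,\mathfrak L_2$ meet at a node of $S_0$ — this is precisely the kind of subtlety that forced the extra care in case \eqref{it-2} of the proof of Proposition \ref{prop:lift}. Once it is secured, what remains is a short play on the uniqueness of effective lifts granted by Proposition \ref{prop:lift}.
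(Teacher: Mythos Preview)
Your concern in the final paragraph is exactly the point, and the ``unique factorisation'' argument in your first paragraph does not secure it. From $\pi^*H = C + Q + \sum n_i E_i$ on $S$ and $\pi^*\mathfrak H = C_\p + \mathfrak L_1 + \mathfrak L_2 + \sum m_i E_i$ on $S_\p$, all you can extract is $Q_\p = \mathfrak L_1 + \mathfrak L_2 + \sum (m_i - n_i)E_i$ with $m_i \geq n_i$; nothing you have written forces $m_i = n_i$. The multiplicity of $E_i$ in the special fibre of $\pi^*H$ is not obtained merely by reducing the characteristic-zero multiplicity --- it also absorbs whatever exceptional contribution $Q_\p$ itself picks up --- so your cancellation is circular. (Whether $m_i=n_i$ actually holds is a genuine local question at the node, not a formality; note for instance that an irreducible conic has $Q^2=-4$, whereas two lines meeting only at a node have $(\mathfrak L_1+\mathfrak L_2)^2=-6$.)

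The paper does not try to pin down $Q_\p$. It grants $Q_\p = \mathfrak L_1 + \mathfrak L_2 + E_1 + \cdots + E_r$ with the $E_j$ possibly present, and then, assuming $\mathfrak L_i$ lifts to a line $L$, computes via specialisation
\[
L\cdot Q \;=\; \mathfrak L_i\cdot(\mathfrak L_1 + \mathfrak L_2 + E_1 + \cdots + E_r) \;=\; -3 + 1 \;=\; -2,
\]
the $+1$ coming from $\mathfrak L_i\cdot\mathfrak L_j=1$ when the lines meet at a smooth point, or from the unique end of the exceptional chain that $\mathfrak L_i$ meets when they meet at a node. Since $L\cdot Q < 0$, the line $L$ must be a component of $Q$, so $Q$ is reducible. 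This single intersection number replaces both your lifting of $\mathfrak L_j$ and your appeal to degree-two uniqueness in Proposition~\ref{prop:lift}, and it is indifferent to the exceptional bookkeeping you were worried about.
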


\begin{proof}
Denote the  quadric residual to $C$ in $H\cap S$ by $Q$.
By construction, $Q$ lifts $\mathfrak L_1+\mathfrak L_2$ in $\Pic(S)$ up to some exceptional curves
$E_1,\hdots,E_r$
(if $\mathfrak L_1, \mathfrak L_2$ meet in a node on $S_0$, 
compare case \eqref{it-2} in the proof of Proposition \ref{prop:lift}).
If the divisor class of $\mathfrak L_i$ lifts to $\Pic(S)$,
then $\mathfrak L_i$ lifts to a line $L\subset S$ by Proposition \ref{prop:lift}.
We have
\[
L.Q = \mathfrak L_i.(\mathfrak L_1+\mathfrak L_2+E_1+\hdots+E_r)= -3+1=-2,
\]
so the line $L$ is a component of the quadric $Q$.
\end{proof}

\subsection{Example: Picard number four}

We start with the following Delsarte surface $S$ of Picard number $\rho(S)=5$ over $\C$:
\begin{eqnarray}
\label{eq:5}
S:\;\; x(x^4-y^4) = yz^4+zw^4.
\end{eqnarray}
The Picard number is computed by the techniques from \ref{ss:del}
via the covering Fermat surface $S_m$ which in the present situation has degree $m=64$.
As for generators of $\Pic(S)\otimes\Q$, we can take the five lines
into which the hyperplane $\{z=0\}$ decomposes on $S$
as these generate a lattice of rank 5 and discriminant $256$.
By Lemma \ref{lem:del} certain reductions have the same Picard number:
\begin{eqnarray}
\label{eq:64}
\rho(S_p) = 5 \;\;\; \forall \, p\equiv 1\mod 64.
\end{eqnarray}
Now we employ Technique \ref{tech-1}
to engineer an arithmetic deformation $\mS$ of $S$ which will have Picard number four.
To this end, let
$p\equiv 1\mod 64$ as above
and $f\in\Q[x,y,w,z]$ be a homogeneous quadratic polynomial
whose coefficients are $p$-adic intergers.
Consider the quintic surface
\[
\mS:\;\; x(x^2+y^2)(x^2-y^2+pf(x,y,z,w)) = yz^4+zw^4.
\]
Then $\mS_p=S_p$ and hence $\rho(\mS)\leq 5$ by the specialisation embedding \eqref{eq:spec}
applied to \eqref{eq:64}.
On the other hand, $\rho(\mS)\geq 4$ by the decomposition of the hyperplane $\{z=0\}$
intersected with $\mS$.
By Corollary \ref{cor:2lines},
either line $\{z=x\pm y=0\}$ lifts to $\mS$ if and only if the quadric
$x^2-y^2+pf(x,y,0,w)$ is reducible over $\C$.
Thus, if $f$ is chosen such that the above quadric is irreducible, for instance for $f=w^2$,
then $\rho(\mS)=4$ by Corollary \ref{cor:lift}.

\subsection{Picard number three}
\label{ss:3}

Building on the above example,
we can also construct a quintic of Picard number three
by endowing $S$ with a Galois action on the lines
and then applying Technique \ref{tech-2}.
In detail,
we consider some prime $p\equiv 1\mod 64$ as before
and scale the coefficient of $xy^4$ by some $a\in\Q$ such that
$u^4-a$ is irreducible in $\F_p[u]$.
Now let $f\in\Z[x,y,z,w]$ be homogeneous of degree two and consider the 
arithmetic deformation defined by
\[
\mS:\;\;\; x (\underbrace{x^4-ay^4+2px^2f(x,y,z,w)+p^2f(x,y,z,w)^2}_{g(x,y,z,w)}) = yz^4+zw^4.
\]
Then the quartic
\[
V=\{z=g(x,y,0,w)=0\}\subset\PP^3
\]
on $\mS$ splits into two conjugate conics $Q_1, Q_2$ over the extension $\Q(\sqrt{a})$;
hence $\rho(\mS)\geq 3$.
On the other hand, by Remark \ref{rem:scale}, 
\[
\rho(\mS)\leq \rho(\mS_p) = \rho(S_p)=5.
\]
To prove that really $\rho(\mS)=3$,
we  investigate the Galois-module structure of $\Pic(\mS_p)$.
By construction, there is a single line $\ell=\{x=z=0\}$ defined over $\F_p$ while the four lines 
which the quartic $V_p$ decomposes into, form one orbit with cyclic Galois action.
Thus $\Pic(\mS_p)/\Z\ell_p$ has the following 
 irreducible Galois-submodules
 with some fixed root $\beta\in\F_{p^4}$ of $u^4-a$:
 
\begin{table}[ht!]
$$
\begin{array}{cc}
\text{generators} & \text{rank}\\
\hline
(Q_1)_p + (Q_2)_p & 1\\
(Q_1)_p - (Q_2)_p & 1\\
\begin{matrix}\{z=0,x=\beta y\}-\{z=0,x=-\beta y\},\\
 \{z=0,x=\sqrt{-1}\beta y\}-\{z=0,x=-\sqrt{-1}\beta y\} 
 \end{matrix}
 & 2
\end{array}
$$
\caption{Irreducible Galois-submodules of $\Pic(\mS_p)/\Z\ell_p$}
\label{tab:3}
\end{table}

Since the two rank one Galois modules lift to $\mS$ by definition,
$\mS$ can only have $\rho(\mS)>3$ if the full rank 2 Galois module lifts as well.
That is, all lines lift by Theorem \ref{thm:prim}.
Equivalently, by Corollary \ref{cor:2lines}, both quadrics $Q_1, Q_2$ are reducible over $\C$,
but this we can rule out by the choice of $f$,
for instance by setting $f=w^2$.

\subsection{Picard number two}
\label{ss:2}

We start by adapting the approach of Technique \ref{tech-2} from  \ref{ss:3} to our situation.
Unfortunately there will not be a completely algebraic description of 
quintics with Picard number two,
so we will be quite a bit more specific from the very beginning.

In the notation and set-up from \ref{ss:3}, 
consider the arithmetic deformation
\begin{eqnarray}
\label{eq:2}
\mS:\;\;\; x(x^4-ay^4+px^2w^2) = yz^4+zw^4.
\end{eqnarray}
At $z=0$ we have the line $\ell$ as before and a residual quartic $V$
which can be seen to be irreducible by the Eisenstein criterion.
Thus  
\begin{eqnarray}
\label{eq:2-5}
2\leq \rho(\mS)\leq \rho(\mS_p)=\rho(S_p)=\rho(S)=5,
\end{eqnarray}
where we want to show that the first inequality is in fact an equality.
Inspecting the Galois module structure of $\Pic(\mS_p)$,
we obtain a picture in complete analogy with Table \ref{tab:3}:
the first module corresponds to $V_p$ (a sum over 4 lines on $\mS_p$) and obviously lifts
while the second rank one module corresponds to the alternating sum
over the 4 lines which $V_p$ decomposes into on $\mS_p$.
If $\rho(\mS)>2$, then either the latter Galois module or the rank two module has to lift from $\mS_p$ to $\mS$.
In either case, this implies using the lifting of $V$ and Theorem \ref{thm:prim} 
that some quadric (a sum of two planar lines) lifts to $\mS$.
By Proposition \ref{prop:lift}, the lift is unique and lies in a hyperplane containing $\ell$,
that is, in some 
\[
H_\lambda = \{z=\lambda x\},\;\;\; \lambda\in\PP^1.
\]
This pencil of hyperplanes defines a genus 3 fibration
\[
\pi: \mS \to \PP^1,
\]
which we claim to have no reducible fibers, 
and we have already seen that the quartic $V$ at $\lambda=0$ is irreducible.
To prove this, we will make use of the following analogue of the 
Shioda-Tate formula for elliptic fibrations (cf.~\cite[Cor. 1.5]{Sh-EMS}):

\begin{Lemma}
\label{lem:st}
Denote by $m_\lambda$ the number of components of the fiber of $\pi$ at $\lambda\in\PP^1$.
Then
\[
\rho(\mS) \geq 2 + \sum_{\lambda\in\PP^1} (m_\lambda-1).
\]
\end{Lemma}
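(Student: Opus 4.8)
The plan is to mimic the classical Shioda--Tate argument for elliptic fibrations, replacing the elliptic fiber class by the class $F=\pi^*(\text{pt})$ of a fiber of the genus $3$ fibration $\pi\colon\mathfrak S\to\PP^1$. First I would record the two obvious independent classes in $\NS(\mathfrak S)\otimes\Q$: the fiber $F$ (equivalently $\pi^*H$ restricted to the pencil) and the line $\ell=\{x=z=0\}$, which is a section-like curve meeting each fiber; these span a rank $2$ sublattice $U$ since $F^2=0$, $F.\ell=1$ (the hyperplane $H_\lambda$ meets $\ell$ in a point for generic $\lambda$), so the intersection matrix is nondegenerate. Then, for each $\lambda\in\PP^1$ at which the fiber is reducible, write the fiber as $\sum_{i=1}^{m_\lambda} n_i\Theta_{\lambda,i}$ and note that all its components are linearly equivalent as a sum to $F$; hence only $m_\lambda-1$ of the component classes are independent modulo $F$.

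The key step is to check that the classes collected this way — namely $F$, $\ell$, and for each $\lambda$ the components $\Theta_{\lambda,i}$ with $i\ge 2$ (one component per reducible fiber being redundant) — are all linearly independent in $\NS(\mathfrak S)\otimes\Q$. This follows from a block-matrix computation of their intersection pairing: components of fibers over distinct points $\lambda\neq\lambda'$ are disjoint, so the Gram matrix is block diagonal over the reducible fibers after splitting off the rank $2$ hyperbolic block $U=\langle F,\ell\rangle$; within a single reducible fiber, the sublattice spanned by $\Theta_{\lambda,2},\dots,\Theta_{\lambda,m_\lambda}$ modulo $F$ is negative definite, because the full fiber is an effective divisor of self-intersection $0$ whose components have negative-definite intersection form once the (one-dimensional) radical spanned by $F$ is removed — this is the standard Zariski-type lemma for fibers of a fibration over a curve. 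Counting ranks then gives $\rho(\mathfrak S)\ge 2+\sum_\lambda(m_\lambda-1)$.

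The main obstacle I anticipate is justifying the negative-definiteness (or at least negative-semidefiniteness with radical exactly $\Q F$) of the fiber components, since unlike the elliptic case the fibers here are curves of arithmetic genus $3$ and could in principle be non-reduced or have complicated configurations; the cleanest route is to invoke the general lemma (see e.g.\ \cite[Cor. 1.5]{Sh-EMS}, which is stated precisely for this purpose) that for a fibration $\pi\colon S\to C$ of a smooth projective surface onto a smooth curve, $\NS(S)\otimes\Q$ contains the span of $F$, a multisection, and the non-identity components of fibers as an orthogonal-in-blocks subspace, whence the rank inequality. A minor point to address is the existence of the multisection: the line $\ell$ lies on $\mathfrak S$ by construction (it is cut out by $x=z=0$, which visibly satisfies \eqref{eq:2}) and $\pi|_\ell\colon\ell\to\PP^1$ is finite of degree equal to $F.\ell$, so $\ell$ provides the required multisection and the bound is exactly $2+\sum_{\lambda}(m_\lambda-1)$ as claimed.
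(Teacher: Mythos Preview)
Your argument is correct and follows essentially the same Shioda--Tate route as the paper: split off the rank~$2$ sublattice $\langle F,\ell\rangle$, then invoke Zariski's lemma to see that the components of each fiber contribute a negative-definite block of rank $m_\lambda-1$ in $F^\perp/\Q F$. One small slip: the line $\ell=\{x=z=0\}$ is \emph{contained} in every hyperplane $H_\lambda=\{z=\lambda x\}$, so $F.\ell$ is computed by B\'ezout in the plane $H_\lambda$ as $1\cdot 4=4$, not $1$; this is harmless since you only need $F.\ell\neq 0$.
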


\begin{proof}
We have $F^2=0$ for any fiber of $\pi$ (generally part of the moving lemma).
Hence $\ell$ and $F$ generate an indefinite rank two sublattice of $\Pic(\mS)$.
Its orthogonal complement in $\Pic(\mS)\otimes\Q$ is naturally identified with the quotient $F^\perp/\Q F$.
%(after tensoring with $\Q$.
By construction, the fibre components contribute to this quotient.
More precisely, by Zariski's lemma,
quotienting by $F$ kills the only relation between fiber components, 
and the image of the components of any fiber $F_\lambda$ in $F^\perp/\Z F$
is a negative definite lattice of rank $m_\lambda-1$.
Together with $\langle\ell,F\rangle$, 
the local contributions from the fibers sum up to the claimed lower bound for $\rho(\mS)$.
\end{proof}

%REF gen formulae?!

We continue to rule out other reducible fibers.
To this end, we simply compute (using resultants) those values $\lambda$
where the fiber of $\pi$ attains a singularity.
Outside $\lambda=0,\infty$, they appear as roots of the following polynomial:
%
%
%
%
%For this purpose, we switch to the quotient by the involution
%\[
%\imath: \;\; (x,y,z,w) \mapsto (x,y,z,-w).
%\]
%The quotient results in a K3 surface after resolving an isolated rational double point singularity;
%the fibration $\pi$ induces a genus one fibration on the K3 surface,
%whose singular fibers can be computed through the discriminant:
\[
%\Delta = \lambda^3 h(\lambda),\;\;\;
h(\lambda) = 27 \lambda^{19}+256 a \lambda^3+192 a p^2 \lambda^2+48 a p^4 \lambda+4 a p^6.
\]
By construction, it is exactly these places (including $\infty$) where $\pi$ may have a reducible fibre.
%(since   a reducible fiber contains a component which is rational). 
To engineer an arithmetic deformation $\mS$ with Picard number two,
we proceed by
picking $p, a$ as in \ref{ss:3}
in such a way  that $h$ %the degree $19$ factor $h$ of $\Delta$ 
is irreducible over $\Q$.
For instance, this holds for $(p, a)=(193,5)$.
This implies that the fibers of $\pi$ at the roots of $h$ do all have the same type;
in particular, they are either all reducible or all irreducible.
But  if they were all to be reducible, then each fiber would contribute to $\Pic(\mS)$
by Lemma \ref{lem:st}:
\[
\rho(\mS)\geq 2+19.
\]
Since this contradicts \eqref{eq:2-5},
we infer that all the fibers of $\pi$ at the roots of $h$ are irreducible,
and the only reducible fiber could sit at $\lambda=\infty$
where the Eisenstein criterion again proves the opposite.

In summary, we conclude that no hyperplane in the pencil $H_\lambda$ contains a quadric on $\mS$,
so none of the quadrics on $\mS_p$ in question can possibly lift to $\mS$.
By Technique \ref{tech-2} we deduce that $\rho(\mS)=2$.

\begin{Remark}
\label{rem:2}
The symmetry  in \eqref{eq:2} allows for an involution
whose quotient is a K3 surface.
On the K3 surface, $\pi$ induces an elliptic fibration,
so one could also analyse the reducible fibers of $\pi$
through the quotient fibration.
%because this has fibres of genus one.
%Later we will encounter examples which make essential use of such symmetries,
%and of the theory of the resulting (elliptic) K3 surfaces.
\end{Remark}

\subsection{Deforming Singularities}
\label{ss:sing}

At two instances (\ref{ss:12}, \ref{ss:def-1}),
we will allow a singularity to deform smoothly from type $A_{2n}$ to type $A_{2n-1} \; (n\in\N)$.
Here we study the impact on the Picard number
which resembles what we have seen for deforming a pair of lines in \ref{ss:2lines}.

The overall picture is quite simple:
the resolution of the singularity requires $n$ blow-ups of the ambient threespace.
Each of the first $n-1$ blow-ups results in two exceptional curves
on the strict transform of the surface
whose intersection point is a singularity of the surface.
At the final step, the exceptional divisor is a quadric $Q$;
the singularity has type $A_{2n}$ if and only if $Q$ is reducible
and decomposes into two components $E_n, E_{n+1}$.
In particular, this shows that the degeneration from type $A_{2n-1}$ to type $A_{2n}$ is smooth.

We phrase the next lemma in more generality for a surface $Y$
which is either smooth of degree $d\geq 4$ in $\PP^3$
or the minimal resolution of such a surface with isolated rational double point singularities:

\begin{Lemma}
\label{lem:sing}
Let $Y$ be defined over some number field $K$.
Assume that at some prime $\p$ of good reduction,
a singularity of type $A_{2n-1}$ degenerates 
to type $A_{2n}$.
Then
\[
\rho(Y)<\rho(Y_\p).
\]
\end{Lemma}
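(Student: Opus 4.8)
The plan is to apply Corollary~\ref{cor:lift}: it suffices to exhibit a single divisor class on $Y_\p$ that does not lift to $\NS(Y)$.

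First I would set up the geometry following the discussion opening \ref{ss:sing}. Resolving the degenerating point over the local ring $\OO_{K,\p}$ by the required $n$ blow-ups of the ambient $\PP^3$ yields, as explained there, a smooth family $\mathcal Y\to\mathrm{Spec}\,\OO_{K,\p}$ with generic fibre $Y$ and special fibre $Y_\p$. On $Y$ the point carries an $A_{2n-1}$-chain of $(-2)$-curves $C_1-C_2-\cdots-C_{2n-1}$ whose middle member $C_n=\tilde Q$ is the strict transform of the irreducible exceptional conic produced by the last blow-up, while over $\F_\p$ that conic splits as $Q_\p=E_n+E_{n+1}$, so that $Y_\p$ carries the $A_{2n}$-chain $C_1-\cdots-C_{n-1}-E_n-E_{n+1}-C_{n+1}-\cdots-C_{2n-1}$; consequently the specialisation embedding \eqref{eq:spec} sends $C_i\mapsto C_i$ for $i\neq n$ and $C_n\mapsto E_n+E_{n+1}$.

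The claim to prove is that the class of $E_n$ does not lift to $\NS(Y)$. Suppose it did, say $D\in\NS(Y)$ with $D_\p=E_n$. Since $\mathcal Y$ is smooth over $\OO_{K,\p}$, hence regular, the class $D$ is the restriction of a divisor class on $\mathcal Y$, so $h^0$ is upper semicontinuous under reduction; combined with Riemann--Roch on $Y$ -- where $D^2=E_n^2=-2$, $D\cdot\pi^*H=E_n\cdot\pi^*H=0$, $K_Y=(d-4)\pi^*H$ and $q(Y)=0$, so that $\chi(D)=\chi(\OO_Y)-1=p_g$ -- and Serre duality, this forces $h^2(D)=h^0(K_Y-D)\le h^0\bigl(Y_\p,(d-4)\pi^*H-E_n\bigr)\le p_g-1$, the last step because some degree-$(d-4)$ surface of $\PP^3$ misses the singular point, so the sections of $(d-4)\pi^*H$ vanishing along $E_n$ form a proper subspace of the $p_g$-dimensional space $H^0\bigl(Y_\p,(d-4)\pi^*H\bigr)\cong H^0\bigl(\PP^3,\OO(d-4)\bigr)$. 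Hence $h^0(D)\ge\chi(D)-h^2(D)\ge 1$, so $D$ is effective. Being effective with $D\cdot\pi^*H=0$ and $\pi^*H$ nef, $D$ is a non-negative integral combination of $\pi$-exceptional curves; and comparing intersection numbers with all exceptional curves -- these are preserved by specialisation, and over each resolved point the exceptional curves span a negative definite lattice -- forces $D=\sum_{i=1}^{2n-1}a_iC_i$ with every $a_i\in\Z_{\ge 0}$. Now $D\cdot C_n=E_n\cdot(E_n+E_{n+1})=-1$, i.e.\ $a_{n-1}+a_{n+1}-2a_n=-1$, whence $a_n\ge 1$; therefore $D-C_n$ is again effective, hence so is its reduction $(D-C_n)_\p$. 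But $(D-C_n)_\p=E_n-(E_n+E_{n+1})=-E_{n+1}$, so $(D-C_n)_\p+E_{n+1}$ is a nonzero effective divisor linearly equivalent to zero -- a contradiction. This proves the claim, and Corollary~\ref{cor:lift} (in mixed characteristic via the ramification-aware form of Theorem~\ref{thm:prim}, cf.\ \cite{EJ}) then yields $\rho(Y)<\rho(Y_\p)$.

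The only genuinely delicate step is the effectivity of a hypothetical lift $D$: one must exclude that $D$ is an ``invisible'' class of $\NS(Y)$ admitting no effective representative, and this is precisely where the bound $h^0\bigl(Y_\p,(d-4)\pi^*H-E_n\bigr)\le p_g-1$ enters (for $d=4$ it degenerates to the triviality that $-E_n$ is not effective, with the same outcome). The rest -- the combinatorics of the two chains, the crucial sign $D\cdot C_n=-1$, and the final linear-equivalence contradiction -- is routine, and runs exactly parallel to the deformation of a pair of planar lines treated in \ref{ss:2lines} and Corollary~\ref{cor:2lines}.
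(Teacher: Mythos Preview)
Your argument is correct and follows the same strategy as the paper: show that the class of $E_n$ does not lift by proving that any hypothetical lift is effective (via Riemann--Roch together with upper semicontinuity of $h^0(K_Y-D)$, using that sections of $(d-4)\pi^*H$ vanishing along $E_n$ correspond to forms vanishing at the singular point), and then derive a contradiction from the intersection with the middle exceptional curve.

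The only difference is in how the contradiction is phrased. The paper stops at the observation $\mathfrak E_n\cdot Q=-1$ and concludes in one line that the irreducible conic $Q$ would have to be reducible. You make this step explicit: you first pin the effective lift down to a non-negative combination $\sum a_iC_i$ on the $A_{2n-1}$ chain (using $D\cdot\pi^*H=0$ and negative definiteness over the other singular points), then extract $a_n\ge 1$ from $D\cdot C_n=-1$, and finally specialise the effective divisor $D-C_n$ to the non-effective class $-E_{n+1}$. This is exactly the computation hiding behind the paper's sentence ``Hence $Q$ is reducible'' (the negative intersection forces $Q\le \mathfrak E_n$, and subtracting $Q$ gives the same impossible specialisation), so the two proofs are really the same---yours just unpacks the final step.
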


\begin{proof}
In the  above notation, 
it suffices to show that $E_n$ does not lift from $Y_\p$ to $Y$.
We assume to the contrary that
 $E_n$ lifts to some divisor $\mathfrak E_n$ on $Y$.
We claim that $\mathfrak E_n$ is effective.
To see this we use Riemann-Roch to compute
\[
\chi(\mathfrak E_n) = \chi(E_n)=\chi(\OO_Y)-1 = h^{2,0}(Y) = h^0(K_Y) = \binom{d-1}3.
\]
Here $h^2(E_n)=h^0(K_Y-E_n)=h^0(K_Y)-1$
since the linear system $|K_Y-E_n|$ is given as a subspace of $|K_Y|$ considered over $\PP^3$
by the codimension one condition that the form
vanishes in the singular point.
Hence $h^0(E_n)\geq 1$, 
and upper-semicontinuity applied to $h^2$ implies $h^0(\mathfrak E_n)\geq 1$ as claimed. 

We conclude by observing that the effective divisor $\mathfrak E_n$ has negative intersection with $Q$ on $Y$:
\[
\mathfrak E_n.Q = -1.
\]
Hence $Q$ is reducible, and the singularity type on $Y$ is not $A_{2n-1}$. 
This gives  the required contradiction to our assumption.
In consequence $E_n$ does not lift,
and the claim about the Picard number follows from Corollary \ref{cor:lift}.
\end{proof}

We point out that Lemma \ref{lem:sing}
does not automatically lend itself to simultaneous smooth deformations
of more than one singularity, or a singularity and other divisors (with Picard number dropping by more than one).
It can, however, be applied in conjunction with Techniques \ref{tech-2} and \ref{tech-3}.

\section{Picard numbers 9 through 12}
\label{s:9-12}

Throughout this section, the starting point for our arithmetic deformations
is the following Delsarte surface $S$ of Picard number $\rho(S)=13$,
computed by the method of \ref{ss:del}:
\begin{eqnarray}
\label{eq:S13}
S:\;\;\; x^5+y^5+xzw^3+wz^4 = 0.
\end{eqnarray}
Here $\Pic(S)$ is generated up to finite index by the exceptional divisors above the $A_4$ singularity at $[0,0,0,1]$
and the 10 lines at $z=0$ and $w=0$
defined over the fifth cyclotomic field $\Q(\zeta)$.
The covering Fermat surface $S_m$ has degree $m=55$.
The invariant transcendental cycles on $S_m$ form a single orbit under $(\Z/m\Z)^\times$
which can be represented by $V(\alpha)$ for $\alpha=(9,11,10)$.
With a view towards arithmetic deformations of $S$,
we recall from Example \ref{ex:55} 
that $V(\alpha)$ becomes algebraic
in characteristics $p\neq 2,5,11$
if and only if 
\[
p\in \mathcal H:=\{3, 19, 24, 27, 29, 37, 38, 39, 42, 47, 48, 53, 54\}\subset(\Z/m\Z)^\times.
\]
For all other residue characteristics $p\neq 2,5,11$, we thus find
\begin{eqnarray}
\label{eq:13}
\rho(S_p) = 13 \;\;\; ((p\mod m) \not\in \mathcal H).
\end{eqnarray}
When deforming, we will often aim for preserving the singularity; for instance,
this is guaranteed presently if the deformation term is quadratic in $x,z$.
In contrast, for Picard number 12 we will deform the singularity in order to engineer a quintic over $\Q$
(cf.~Remark \ref{rem:12}).

\subsection{Picard number 12}
\label{ss:12}

Let 
$p\neq 5,11$ with residue class modulo $m$ outside $\mathcal H$.
Consider the arithmetic deformation
\[
\mS: \;\;\; (x+y+pw) (x^4-x^3y+x^2y^2-xy^3+y^4)+ xzw^3+wz^4=0
\]
which preserves the 10 lines and deforms the $A_4$ singularity to type $A_3$.
By inspection of the present algebraic curves, we have $\rho(\mS)\geq 12$.
On the other hand, Lemma \ref{lem:sing} gives
\[
\rho(\mS)< \rho(\mS_\p)=\rho(S_\p)=\rho(S)=13.
\]
Hence, $\rho(\mS)=12$ as desired.

%Let $\alpha=\zeta+\zeta^4$ be a root of $u^2+u-1$ and
%$p\neq 5,11$ with residue class mod $m$ outside $\mathcal H$.
%Consider the arithmetic deformation
%\[
%\mS: \;\;\; (x+y) (x^2+\alpha xy + y^2 + pz^2) (x^2 -(1+\alpha) xy+y^2) + xzw^3+wz^4=0
%\]
%which preserves the $A_4$ singularity and  8 of the 10 lines.
%We shall employ Technique \ref{tech-1} to prove that 
%\[
%\rho(\mS)=12.
%\]
%For any prime $\p$ above $p$ in $\Z[\alpha]$, we find $\mS_\p = S_\p$ and thus by \eqref{eq:13}
%\[
%\rho(\mS)\leq \rho(\mS_\p)=\rho(S_\p)=\rho(S)=13.
%\]
%On the other hand, by construction, $\rho(\mS)\geq 12$, but the quadric 
%\[
%Q = \{w=x^2+\alpha xy + y^2 + pz^2=0\}
%\]
%is irreducible over $\C$ by the Eisenstein criterion.
%By Corollary \ref{cor:2lines} we deduce that the 2 lines into which $Q_\p$ decomposes on $\mS_\p$
%do not lift to $\mS$. Thus, by Corollary \ref{cor:lift}, $\rho(\mS)=12$ as claimed.

\begin{Remark}
\label{rem:12}
We could have  also engineered a quintic with Picard number 12
by deforming two lines to a quadric while preserving the other lines and the type of the singularity.
However, this can only be achieved over $\Q(\sqrt 5)$.
For instance, letting  $\alpha=\zeta+\zeta^4$, we could take the arithmetic deformation
\[
\mS: \;\;\; (x+y) (x^2+\alpha xy + y^2 + pz^2) (x^2 -(1+\alpha) xy+y^2) + xzw^3+wz^4=0.
\]
\end{Remark}

\subsection{Picard number 11}
\label{ss:11}

Now we restrict to primes $p\equiv 2,3\mod 5$
which are not in $\mathcal H$.
Then the five lines in $S$ above $w=0$ (and above $z=0$)
give one line $\ell=\{w=x+y=0\}$ over $\F_p$ and a cyclic Galois orbit of the remaining four.
Thus we can apply Technique \ref{tech-2}
to the following arithmetic deformation of $S$:
\[
\mS: \;\;\; (x+y) (x^4-x^3y+x^2y^2-xy^3+y^4+pz^2(2x^2-xy+2y^2)+p^2z^4) + xzw^3+wz^4=0.
\]
By definition, we have $\mS_p=S_p$, so $\rho(\mS)\leq 13$.
On the other hand, the quartic on $\mS$ at $w=0$ splits into two conjugate quadrics $Q_1, Q_2$ over $\Q(\sqrt 5)$,
so 
\[
\rho(\mS)\geq 11.
\]
To see that this is in fact an equality, we appeal to the Galois module structure of $\Pic(\mS_p)$.
As in \ref{ss:3}, taking into account the obvious divisors reducing from $\mS$
(exceptional curves, 6 lines, 2 quadrics), 
we are only an irreducible 
Galois module of rank 2 away from lifting the full Picard group of $\mS_p$.
Hence if $\rho(\mS)>11$, then any line would lift from $\mS_p$ by Corollary \ref{cor:lift}.
This is ruled out by Corollary \ref{cor:2lines}
as both $Q_1, Q_2$ are irreducible by the Eisenstein criterion.
Hence $\rho(\mS)=11$.

\subsection{Picard number 10}
\label{ss:10}

We follow up on the previous arithmetic deformation,
but deform 4 lines to an irreducible quartic:
\[
\mS: \;\;\; (x+y) (x^4-x^3y+x^2y^2-xy^3+y^4+pxz^3) + xzw^3+wz^4=0.
\]
Here the quartic $V$ at $w=0$ is irreducible by the Eisenstein criterion again,
so the residual line $\ell$ and the 5 lines at $z=0$ combined with the $A_4$ singularity
yield 
\[
\rho(\mS)\geq 10.
\]
In order to prove equality,
we use $\rho(\mS_p)=13$ as before.
Upon reduction, the quartic $V$ decomposes into a Galois orbit of four lines.
Arguing with Technique \ref{tech-2} as in \ref{ss:2},
we see that $\rho(\mS)>10$ implies
that some quadric composed of two of the four lines lifts from $\mS_p$ to $\mS$.
Automatically, it lies in some hyperplane $H$ containing $\ell$ and reducing to $w=0$.
That is, there is some minimal number field $K$ with a prime ideal $\p\mid p$ and uniformizer $\pi$ of $\p$ such that 
\[
H = \{w=\pi\lambda (x+y)\} \;\;\; \text{for some $\p$-adic integer } \; \lambda\in K. 
\]
We shall now make a comparison of coefficients with the possible lifts of quadrics 
comprised of the four lines on $\mS_p$.
Working in the hyperplane $H\cong\PP^2_{[x,y,z]}$, we obtain
\[
V + \pi^2\lambda^3xz(x+y)^2 + \pi\lambda z^4 = (x^2+(\zeta+\zeta^i)xy+y^2+\pi q_1)
(x^2+(\zeta^j+\zeta^k)xy+y^2+\pi q_1)
\]
with $\p$-adically integral quadrics $q_1, q_2\in K[x,y,z]$ and $\{i,j,k\}=\{2,3,4\}$. 
Modulo $(x^2, xy, y^2)$, this reduces to
\[
pxz^3+ \pi\lambda z^4 \equiv \pi^2 q_1q_2 \mod (x^2,xy,y^2).
\]
From the coefficient of $xz^3$, we deduce that $\pi^2\mid p$, i.e.~$p$ ramifies in $K$.
But this implies that the Galois representation on $H^2_\text{\'et}(\mS,\Q_\ell)$ is ramified at $p$
which in turn means that $\mS$ has bad reduction at $p$, contradiction.
Thus $\rho(\mS)=10$ as desired.

\subsection{Picard number 9}
\label{ss:9}

In order to engineer a quintic with Picard number $9$,
we take a slightly different approach and deform $S$ from \eqref{eq:S13}
in such a way that the oder 5 automorphism
\[
\varphi: (x,y,z,w) \mapsto (x,\zeta y,z,w)
\]
is preserved.
Since this has eigenvalues $\zeta, \zeta, \zeta, \zeta^2$ on $H^{2,0}(S)$,
it endows the transcendental lattice $T(S)\subset H^2(S,\Z)$ with the structure
of a $\Z[\zeta]$-module (cf.~\ref{ss:upper}).
In particular, this implies 
\begin{eqnarray}
\label{eq:T(S)}
4\mid \mbox{rank}(T(S)).
\end{eqnarray}
(Alternatively, we could have scaled the coefficient of $y^5$ by some $a\in\Q$
which is not a fifth power in $\F_p$, in order to use the Galois module structure
employing Technique \ref{tech-2}.)

In detail, consider the arithmetic deformation
\[
\mS:\;\;\; x^5+y^5+px^3z^2+xzw^3+wz^4 = 0
\]
for some prime $p\neq 2,5,11$ whose residue class modulo $m$ is not in $\mathcal H$.
Since the deformation preserves the $A_4$ singularity and the 5 lines at $z=0$
while $\mS_p=S_p$,
we have 
\[
9 \leq \rho(\mS) \leq \rho(\mS_p) = 13.
\]
By \eqref{eq:T(S)} this corresponds to rank$(T(S))=40$ or $44$,
so in particular $\rho(\mS)>9$ implies $\rho(\mS)=13$.
Hence all lines lift from $\mS_p$ to $\mS$.
Consider the 5 lines on $\mS_p$ in the hyperplane $\{w=0\}$.
Since either intersects the other, they lift to a common hyperplane $H$ in $\mS_p$.
By Corollary \ref{cor:2lines}, this hyperplane is unique,
hence it is defined over $\Q$ and invariant under the automorphism $\varphi$.
Moreover, it lifts $\{w=0\}$, so we obtain
\[
H = \{w= p (ax+cz)\} \;\;\; a,b\in\Q\cap\Z_p.
\]
Substituting into $\mS$, we obtain the polynomial
\[
h = x^5+y^5+px^3z^2+p^3xz(ax+cz)^3+p(ax+cz)z^4.
\]
Regarded as a polynomial in $\C[x,z][y]$, $h$ is reducible 
if and only if its constant coefficient is a fifth power  (lifting $x$):
\[
x^5+px^3z^2+p^3xz(ax+cz)^3+p(ax+cz)z^4 \stackrel{!}{=} (x+pbz)^5 \;\;\; (b\in\Q\cap\Z_p).
\]
But then comparing vanishing orders of $p$ at the coefficients of $x^3z^2$,
we find that the deformation summand $px^3z^2$ cannot be compensated for.
Thus the 5 lines on $\mS_p$ in the hyperplane $\{w=0\}$ cannot lift to $\mS$,
and we deduce $\rho(\mS)=9$.

%\begin{Remark}
%It would be natural to hope for 
%Godeaux
%\end{Remark}

We postpone Picard numbers 6 through 8 for a later treatment,
since they require additional methods
involving K3 surfaces and wild automorphisms, see Section \ref{s:6-8}.

\section{Picard numbers 14 through 16}
\label{s:14-16}

In this section, we will engineer quintics with Picard numbers 14, 15 and 16
starting from the Delsarte quintic
\[
S:\;\;\; w(x^4-y^4) = yz^4+zw^4
\]
of Picard number $\rho(S)=17$.
The above model has 4 rational double points of type $A_3$ at $[1,\alpha,0,0]$ with $\alpha^4=1$
and 6 lines: 5 at $z=0$ plus the line $\{w=y=0\}$.
Together the exceptional curves and the lines generate a sublattice of $\Pic(S)$ of finite index
and discriminant $13^3$.

The Delsarte surface $S$ is dominated by the Fermat surface of degree $m=52$.
Fixing a prime $p\equiv 1\mod 52$,
Lemma \ref{lem:del} gives
\[
\rho(S_p) = 17.
\]
We shall now deform some lines above $z=0$ on $S$ while preserving the 4 singularities.
To this end, we will always use deformation terms which are multiples of $w^2$.

\subsection{Picard number 16}
\label{ss:16}

The arithmetic deformation 
\[
\mS:\;\;\; w(x^2+y^2)(x^2-y^2+pw^2) = yz^4+zw^4
\]
preserves the singularities and 4 of the 6 lines. Hence
\[
16\leq\rho(\mS)\leq\rho(\mS_p)=17
\]
by the above considerations. Since the quadric $x^2-y^2+pw^2$ deforming two of the lines on $\mS_p$ 
is irreducible over $\C$,
Corollary \ref{cor:2lines} implies $\rho(\mS)=16$ by Theorem \ref{thm:prim}.

\subsection{Picard number 15}
\label{ss:15}

We continue by equipping $S$ with a Galois action on the lines (cf.~Remark \ref{rem:scale})
and deforming two pairs of lines to two conjugate conics 
in order to apply Technique \ref{tech-2}.
To this end, let $a\in\Q$ such that $u^4-a$ is irreducible in $\F_p[u]$.
Consider the arithmetic deformation
\[
\mS:\;\;\; w(x^4-ay^4+2px^2w^2+p^2w^4) = yz^4+zw^4
\]
which has $15\leq \rho(\mS)\leq 17$ by construction.
Arguing with the Galois module structure as in \ref{ss:3}, \ref{ss:11},
we prove that $\rho(\mS)=15$ since either quadric in $z=0$
is irreducible over $\C$.

\subsection{Picard number 14}
\label{ss:14}

We proceed as in the previous section, but deform the 4 lines to a single irreducible quartic over $\C$:
\[
\mS:\;\;\; w(x^4-ay^4+pyw^3) = yz^4+zw^4
\]
Thus $14\leq \rho(\mS)\leq 17$, and to show $\rho(\mS)=14$ we have to rule out any quadric 
comprising two of the conjugate lines lifting from $\mS_p$ to $\mS$.
This can be achieved along the lines of \ref{ss:2} (optionally also using the sign involution in $x$, cf.~Remark \ref{rem:2})
% polynomial in \lambda of degree 16
or \ref{ss:10}.
In brief, we assume that some hyperplane $\{z=\pi\lambda w\}$ containing the preserved line $\{z=w=0\}$
splits off two quadrics on $\mS$ lifting pairs of lines from $\mS_p$.
Then a comparison of the coefficients of $yw^3$ leads to the contradiction $\pi^2\mid p$ 
as in \ref{ss:10}.

\section{Picard numbers up to 25}

\subsection{Picard number 18}
\label{ss:18}

We start with the Delsarte surface of Picard number 19 from \cite{S-quintic}:
\[
S:\;\;\; x^3yw+xy^4+yz^4+zw^4=0.
\]
The above model has an $A_{16}$ singularity at $[1,0,0,0]$ and 3 lines:
\[
\ell=\{x=z=0\},\;\; \{y=z=0\}, \;\; \{y=w=0\}.
\]
Together with the exceptional curves, they generate a finite index sublattice of $\Pic(S)$ of discriminant $96$.
Since $S$ is covered by the Fermat surface of degree $m=35$,
Lemma \ref{lem:del} gives
\[
\rho(S_p)=19 \;\;\; \forall\;p\equiv 1\mod 35.
\]
We shall now deform $S$ arithmetically 
in a way that preserves the singularity and the two lines other than $\ell$:
\begin{eqnarray}
\label{eq:18}
\mS: x^3yw+xy^4+yz^4+zw^4+py^5=0.
\end{eqnarray}
By construction, we have
\[
18\leq \rho(\mS) \leq\rho(\mS_p)=19.
\]
We will now show that the line $\ell$ does not deform to $\mS$.
By Corollary \ref{cor:lift},
this will suffice to prove that $\rho(\mS)=18$.

Assume to the contrary that $\ell_p$ lifts to $\mS$.
By the uniqueness in Proposition \ref{prop:lift},
there are $p$-adic integers $a,b,c,d\in\Q$
such that the lift is given by
\[
x=p(ay+bw),\;\;\; z=p(cy+dw).
\]
We substitute into \eqref{eq:18} and solve for the resulting degree 5 polynomial in $y,w$ to vanish identically.
The coefficient of $w^5$ gives $d=0$, and subsequently 
we arrive at a contradiction thanks to the term $py^5$ in \eqref{eq:18}.
Thus $\rho(\mS)=18$.

\subsection{Picard number 20}
\label{ss:20}

Our starting point is the Delsarte surface
\[
S:\;\;\; yzw^3+y^4w+xyz^3+x^5=0
\]
It has singularities of type $A_4$ at $[0,0,0,1]$ and $A_{14}$ at $[0,0,1,0]$.
Together with the hyperplane section and the lines 
\[
\ell_1=\{x=y=0\},\;\;\; \ell_2=\{x=w=0\},
\]
the exceptional divisors generate a lattice of rank 20 and discriminant $-34$.
This falls one short of $\rho(S)=21$
as one can check with the covering Fermat surface of degree $m=34$.
For the missing generator of $\Pic(S)$, we consider a degree 6 curve $C$
which lies on $S$ as a non-complete intersection in the hypersurface $\{y^3+zw^2=0\}$.
In $\PP^3$, the curve has a rational parametrisation
\begin{eqnarray*}
\PP^1\;  & \to & \;\;\;\;\; C\\
~[s,t] & \mapsto & [s^5t,-s^2t^4,s^6,t^6]
\end{eqnarray*}
whose inverse rational function $z/x$ is well-defined outside $[0,0,0,1]$.
On the resolution of the $A_4$ surface singularity,
this point of indeterminacy results in a cusp where $C$ meets an exceptional curve tangentially.
Thus $C\subset S$ has arithmetic genus $p_a(C)=0$,
so adjunction gives $C^2=-6$.
By direct computation, one verifies that $C$ and the previously named curves on $S$
generate a finite index sublattice of $\Pic(S)$ of discriminant $17^2$.

We continue with an arithmetic deformation
preserving singularities and all above curves except for $\ell_2$:
\begin{eqnarray}
\label{eq:20}
\mS:\;\;\; yzw^3+y^4w+xyz^3+x^5=py^2(y^3+w^2z)
\end{eqnarray}
for some prime $p\equiv 1\mod 34$.
By construction, we have
\[
20\leq \rho(\mS)\leq\rho(S_p)=21.
\]
In order to prove $\rho(\mS)=20$, it suffices by Corollary \ref{cor:lift}
to show that the line $\ell_2$ does not lift to $\mS$.
Assuming to the contrary that $\ell_2$ lifts,
Proposition \ref{prop:lift} guarantees uniqueness, so the hypothetical lift $L$ is defined over $\Q$.
Clearly $L$ is contained in the pencil of planes containing $\ell_1$, so we obtain
\[
L:\;\; x=p\lambda y,\;\;\; w=p(ay+bz)
\]
where $\lambda, a,b\in\Q$ are $p$-adic integers. We substitute into \eqref{eq:20} and compare coefficients:
$yz^4$ gives $b=0$, then $y^2z^3$ gives $\lambda=0$,
but then $y^4z$ implies $a=0$ leaving the constant term $py^5$, contradiction.
Thus $\ell_2$ does not lift to $\mS$, and we deduce $\rho(\mS)=20$.

\subsection{Picard number 22}
\label{ss:22}

Consider the Delsarte surface
\[
S:\;\;\; x^3yw+y^5+z^4w+zw^4=0
\]
which has Picard number $\rho(S)=23$ as can be computed from the covering Fermat surface
of degree $m=45$.
The above model in $\PP^3$ has a singularity of type $A_{19}$ at $[1,0,0,0]$.
Together with the five lines at $y=0$,
the exceptional curves generate a finite index sublattice of $\Pic(S)$ of discriminant $27$.
We shall now deform two lines to a quadric
while preserving the other 3 lines and the singularity.
To this end, let $p\equiv 1\mod 45$ and define
\[
\mS: \;\;\; x^3yw+y^5+zw(z+w)(z^2-zw+w^2+pxw)=0
\]
By construction, we have
\[
22\leq\rho(\mS)\leq\rho(\mS_p)=23,
\]
but the two lines on $\mS_p$ underneath the irreducible quadric $\{z^2-zw+w^2+pxw=y=0\}$ on $\mS$
do not lift by Corollary \ref{cor:2lines}.
Hence, by Corollary \ref{cor:lift}, $\rho(\mS)=22$.

\subsection{Picard number 24}
\label{ss:24}

We start with the following Delsarte surface, appropriately scaled for our purposes:
\[
S:\;\;\; w^5-xz^4+2xy^3w+x^5=0.
\]
It is covered by the Fermat surface of degree 60;
using \ref{ss:del} we compute $\rho(S)=25$,
and the same for the reduction $S_p$ modulo any prime $p\equiv1\mod 60$.
The above equation accommodates an $A_{19}$ singularity at $[0,1,0,0]$
and 5 lines at $\{w=0\}$.
Together these curves generate a sublattice of $\Pic(S)$ of rank 23.
For the full Picard lattice, we complement these curves
by the following non-complete intersections:
consider the hypersurface 
\[
\{xw=y^2\}
\]
which splits on $S$ into two smooth degree 5 curves of genus $2$,
given in $\PP^3$ by
\[
x^5+y^5\pm x^3z^2=0.
\]
For each curve, adjunction gives $C^2=-3$.
Applying the oder three automorphism $y\mapsto \zeta_3 y$, we obtain two further such pairs.
With the previous curves,
they generate a sublattice of $\Pic(S)$ of finite index and discriminant $900$.

We shall now engineer an arithmetic deformation which preserves the singularity,
the 5 lines and the first pair of genus 2 curves.
These curves generate  a lattice of rank 24.
In detail, let $a,b\in \Q$ be $p$-adic integers and consider 
\[
\mS:\;\;\; w^5-xz^4+2xy^3w+x^5+pxw(xw-y^2)(ax+bw)=0
\]
Then by definition,
\[
24\leq\rho(\mS)\leq\rho(\mS_p)=25.
\]
In order to prove that the first inequality is attained,
it suffices by Corollary \ref{cor:lift} to prove that one of the other genus 2 curves does not lift.
Indeed, any of these genus 2 curves $C$ on $S$ satisfies
\[
\chi(C) = 1,\;\;\; h^2(C)=0.
\]
Hence one can show as in the proof of Proposition \ref{prop:lift}
that any lift is automatically effective and unique.
However, the deformations of these curves are not so easy to control
(notably because they are not complete intersections).
Instead, we decided to pursue an alternative approach
utilising the involution
\[
\imath: \;\; (x,y,z,w) \mapsto (x,y,-z,w).
\]
As in \ref{ss:2}, the quotient surface $\mS/\imath$ resolves to a K3 surface $\mX$
deforming the resolution $X$ of $S/\imath$.
%In terms of transcendental lattices
%(or Hodge structures), we thus obtain compatible  splittings (over $\Q$)
%\begin{eqnarray*}
%T(\mS)  & =  & T(\mX) + T(\mX)^\perp \\
%\cup \;\;\; && \;\;\; \cup\\
%T(S) & = & T(X) + T(X)^\perp
%\end{eqnarray*}
For our purposes, it is crucial that each genus 2 curve is invariant under $\imath$;
they map to rational curves on the quotient surface.
%
%CHECK! also use quintic eqn -- on quotient, it becomes u=... -- compare with $y^2=...$
%
Hence we can control their deformations from $S$ to $\mS$
by studying rational curves deforming from $X$ to $\mX$.
Here we work with specific models of $\mX$ and $X$
which not only come with natural double sextic models, %(with $A_9$ and $A_1$ singularities),
but also with elliptic fibrations expressed in the invariant coordinate $u=z^2$:
\[
\mX:\;\;\; u^2=x(w^5+2xy^3w+x^5+pxw(xw-y^2)(ax+bw)).
\]
In detail, we dehomogenise by setting $x=1$ and convert to Weierstrass form.
For $X$, this results in the isotrivial elliptic fibration
\[
X:\;\;\; u^2 = y^3 + 4w^2(w^5+1)
\]
which makes the automorphism $y\mapsto \zeta_3 y$ visible.
Since $X$ is also a Delsarte surface,
one computes directly that $\rho(X)=14$,
corresponding to an invariant transcendental cycle on $S$ of Hodge type $(2,0)$ 
and orbit length $8$. 
There are reducible fibers of type $IV$ at $w=0$ and $II^*$ at $w=\infty$.
Moreover there is a total number of six sections with $y=w^4$ or $\zeta_3 w^4$
pulling back from a rational elliptic surface via the base change $s=t^5$.
From the theory of Mordell-Weil lattices \cite{ShMW}, it follows
that these sections generate the full Mordell-Weil group
whose lattice structure is 
\[
\MWL(X)=A_2^\vee(5).
\]
This holds because it results in $\NS(X)$ having discriminant $-25$
which as an even lattice of rank $14$ prevents any integral even overlattice.
Incidentally, the above sections are exactly induced from the genus 2 curves on $S$
which give bisections on $X$.
The  elliptic fibration deforms to $\mX$ with the same reducible fibers, but
without preserving isotriviality (since the automorphism is not preserved on $\mS$):
\begin{eqnarray}
\label{eq:a+bw}
\mX: \;\;\; u^2 = y^3 -pw(a+bw)y^2 + 4w^2(w^5+1+pw^2(a+bw)).
\end{eqnarray}

\begin{Lemma}
\label{lem:24}
If $p\nmid b$, then $\rho(\mX)=13$.
\end{Lemma}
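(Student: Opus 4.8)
The plan is to pin down $\rho(\mX)=13$ by trapping it between a specialisation upper bound, a Shioda--Tate lower bound, and an explicit obstruction to lifting one extra section.

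For the upper bound, observe that \eqref{eq:a+bw} reduces modulo $p$ to the Delsarte elliptic K3 surface $X$, so $\mX_p=X_p$; since $p\equiv 1\bmod 60$, Lemma \ref{lem:del} gives $\rho(X_p)=\rho(X)=14$, whence $\rho(\mX)\leq 14$ by the specialisation embedding \eqref{eq:spec}. For the lower bound, the fibration on $\mX$ keeps the reducible fibres $IV$ at $w=0$ and $II^*$ at $w=\infty$, so its trivial lattice has rank $2+2+8=12$. The deformation was chosen to preserve the first pair of genus $2$ curves on $\mS$; these are $\imath$-invariant, hence descend to (a curve representing) one of the sections $P_0=(w^4,w(w^5+2))$ of the fibration, of positive height, and non-torsion since its reduction is the corresponding positive-height section on $X_p=\mX_p$. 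By the Shioda--Tate formula \cite{ShMW} this already yields $\rho(\mX)\geq 13$, so everything reduces to excluding $\rho(\mX)=14$.

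Assume $\rho(\mX)=14$. As $\mX$ is defined over $\Q$ and $p>2$, Theorem \ref{thm:prim} makes $\NS(\mX)\hookrightarrow\NS(X_p)$ primitive; being of finite index it is then an isomorphism, and quotienting by the common trivial lattice gives $\MW(\mX)=\MW(X_p)=\MWL(X)=A_2^\vee(5)$. In particular the six sections $(\zeta_3^i w^4,\pm w(w^5+2))$, $i=0,1,2$, of $X_p$ all lift to $\mX$; since $P_0+P_1+P_2=O$ in $\MW(X)$ and $\pm P_0$ lift anyway, it is enough to show that $P_1=(\zeta_3 w^4,w(w^5+2))$ does \emph{not} lift to $\mX$. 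Suppose it does. Over a suitable number field $K$ with prime $\p\mid p$ and uniformiser $\pi$, the lift is a section $(\tilde y(w),\tilde u(w))$ of \eqref{eq:a+bw} with $\p$-integral coefficients reducing to $P_1$; it has the same height as $P_1$, and since the fibre over $w=\infty$ is still of type $II^*$ it meets the identity component there, so $\deg_w\tilde y\leq 4$ and $\deg_w\tilde u\leq 6$. Writing $\tilde y=\zeta_3 w^4+\pi\,\delta(w)$, $\tilde u=w^6+2w+\pi\,\epsilon(w)$ with $\delta,\epsilon$ $\p$-integral and substituting into $\tilde y^3-pw(a+bw)\tilde y^2+4w^2(w^5+1+pw^2(a+bw))=\tilde u^2$, a $\p$-adic comparison of coefficients along the lines of \ref{ss:9}, \ref{ss:10} shows that the two $b$-terms (coming from $-pbw^2\,\tilde y^2$ and from $4bpw^6$) cannot be balanced by the corrections unless $\p$ ramifies over $p$; but that would make the Galois action on $H^2_\text{\'et}(\mX,\Q_\ell)$ ramified at $p$, contradicting good reduction. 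Since $p\nmid b$, no such lift exists, so $\rank\MW(\mX)\leq 1$ and $\rho(\mX)=13$.

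The main obstacle is the coefficient comparison in the last paragraph, which is also exactly where the hypothesis $p\nmid b$ is used: one must control $\deg_w\tilde y$ and $\deg_w\tilde u$ tightly enough that the $b$-terms are forced into coefficients that cannot be absorbed, and then check that absorbing them genuinely requires ramification rather than being killed by the freedom in $\delta,\epsilon$. A secondary point to verify en route is that the section coming from the preserved pair of genus $2$ curves is really non-torsion, so that it raises the Picard number to $13$ and not merely $12$.
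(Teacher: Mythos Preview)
Your proposal follows essentially the same route as the paper: bound $\rho(\mX)$ between $13$ and $14$ via Shioda--Tate and specialisation, then rule out $14$ by showing the section $P_1=(\zeta_3 w^4,w(w^5+2))$ cannot lift. Both arguments invoke primitivity (Theorem~\ref{thm:prim}/Corollary~\ref{cor:lift}) together with the Mordell--Weil lattice structure to force any lift to be a section of the same polynomial shape, and both conclude by a $\p$-adic coefficient comparison in the minimal field of definition $K$, using that good reduction forbids ramification of $p$ in $K$.

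The one point where your write-up diverges from the paper is the localisation of the obstruction. You single out the two $b$-terms $-pbw^2\tilde y^2$ and $4pbw^6$ at order $p$ as the ones that cannot be balanced. In fact the paper reports that the lift \emph{does} go through uniquely to order $\xi^2$ and only fails at order $\xi^3$, the obstruction being $\xi^2\nmid pb$, i.e.\ $p\nmid b$. So the first-order terms you name are absorbed by the freedom in $\delta,\epsilon$; the genuine obstruction sits one order deeper. Your final paragraph already flags exactly this as the point needing care, and indeed it is: when you carry out the computation you will find the order-$p$ equations solvable, and the condition $p\nmid b$ enters only at the third step. Apart from this mislocated obstruction, your argument is sound and matches the paper's.
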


\begin{proof}
The genus 2 curves in the hypersurface $\{xw=y^2\}$ on $\mS$ still induce a pair of sections on $\mX$,
this time with $y$-coordinate $w(w^3+p(a+bw))$.
By construction, we have % for any $p\equiv 1\mod 60$
\[
13\leq\rho(\mX) \leq \rho(\mX_p) = 14.
\]
If $\rho(\mX)=14$, then also the sections with $y=\zeta_3 w^4$ on $X$ 
would deform to $\mX$ by Corollary \ref{cor:lift}.
But then the theory of elliptic surfaces with section,
as recorded in the Mordell-Weil lattices, predicts that the deformations
take exactly the same shape as the original sections in order to preserve intersection numbers.
This  fact which was largely utilised in the determination of sections 
on some specific K3 surfaces in \cite{ES}, for instance,
will be central for us to achieve the exact opposite, that is, disprove lifting.
Namely, we can assume that any lift of the above section has $y$-coordinate 
a polynomial in $w$ of degree $4$.
Explicitly, fixing some $r\in\F_p$ such that $r^3=1, r\neq 1$, 
there is a minimal number field $K$ and a  prime $\p$ above $p$
such that the section is given by  polynomials
in $K[w]$ of degree 4 resp. 6 with $\p$-adically integral coefficients
 lifting $r w^4$ resp. $w(w^5+2)$.
 Here $p$ cannot ramify in $K$ because $\mX$ has good reduction at $p$ and $K$ is minimal, 
 so we can work with a uniformiser $\xi$ of $\p$
 and compare vanishing order modulo $\xi$ as in \ref{ss:10}.
Substituting into \eqref{eq:a+bw},
we can spell out the equations to see that the sections lift uniquely to order $\xi^2$, 
but not to order $\xi^3$ if $\xi^2\nmid pb$, i.e. $p\nmid b$.
\end{proof}

As a consequence of Lemma \ref{lem:24}, we can arrange for $\mS$ to have  $\rho(\mS)=24$
  by picking $b$ not divisible by $p$.

\section{Higher Picard numbers}
\label{s:high}

For higher Picard number, it becomes harder and harder to set up the deformations.
In particular, in order to decrease the Picard number exactly by one,
we have to preserve singularities and plenty of divisors.
Starting from Delsarte surfaces, this is often enough impossible.
In this section, we work out arithmetic deformations for select Picard numbers:
\[
\rho=26, 28, 32, 34, 36, 38.
\]
This completes the Picard numbers required to prove Theorem \ref{thm}
except for $\rho=6,7,8,30, 40, 42, 44$ which will be the subject of the next two sections.

\subsection{Picard number 28}
\label{ss:28}

We take off with the Delsarte surface
\[
S:\;\;\; zw^4-z^5+y^3zw+x^3yw=0
\]
which is covered by the Fermat surface of degree $36$.
The Picard group of rank $29$ is generated up to finite index by the exceptional curves over the singularities
of type $A_{14}$ at $[0,1,0,0]$, $A_4$ at $[1,0,0,0]$ 
and $A_2$ at $[0,1,0,r]$ with $r^3=1$, %[1,0,1,0]$ and $[1,0,-1,0]$
together with the seven lines $\{z=x=0\}, \{z=w=0\}$ and at $y=0$. % discriminant 3^6
We deform the surface for some prime $p\equiv 1\mod 36$ by
\[
\mS: \;\;\; zw^4-z^5+y^3zw+x^3yw=0=pxzw(z^2-w^2).
\]
This preserves the singularities and all lines except for the two deforming to the quadric
\[
\{y=z^2+w^2+pxw=0\}.
\]
In fact, since the quadric is irreducible over $\C$, we already obtain $\rho(\mS)=28$ by Corollary \ref{cor:2lines}
in conjunction with Corollary \ref{cor:lift}.

% was:
%
%We take off with the Delsarte surface
%\[
%S:\;\;\; zw^4+y^4w+xyz^3-x^3yz=0
%\]
%which is covered by the Fermat surface of degree $26$.
%The Picard group of rank $29$ is generated up to finite index by the exceptional curves over the singularities
%of type $A_{16}$ at $[1,0,0,0]$ and $A_3$ at $[0,0,1,0], [1,0,1,0]$ and $[1,0,-1,0]$
%together with the six lines $\{z=y=0\}$ and at $w=0$.
%We deform the surface for some prime $p\equiv 1\mod 26$ by
%\[
%\mS: \;\;\; zw^4+y^4w+xyz^3-x^3yz=pxy^2z^2.
%\]
%This preserves the singularities and all lines except for the two deforming to the quadric
%\[
%\{w=z^2-x^2-pyz=0\}.
%\]
%In fact, since the quadric is irreducible over $\C$, we already obtain $\rho(\mS)=28$ by Corollary \ref{cor:2lines}
%in conjunction with Corollary \ref{cor:lift}.

\subsection{Picard number 26}
\label{ss:26}

We essentially continue with the above surface and a prime $p\equiv 1\mod 36$,
but scale the equation
by a $p$-adic integer $a\in\Q$ such that $u^4-a$ is irreducible over $\F_p$:
\[
S:\;\;\; zw^4-az^5+y^3zw+x^3yw=0.
\]
This endows the four lines $\ell_1,\hdots,\ell_4$ at $y=w^4-az^4=0$ on the reduction $S_p$ 
with the same Galois module structure as in \ref{ss:3},
so we can employ Technique \ref{tech-2} to deform $S$ to
the following surface of Picard number $26$:
\[
\mS: \;\;\; zw^4-az^5+y^3zw+x^3yw=0=px^2z^2w.
\]
To see that $\rho(\mS)\geq 26$, one checks that the singularities as well as the 3 lines at $z=0$
are preserved.
If $\rho(\mS)>26$, then by the Galois module structure and Proposition \ref{prop:lift},
some conic  $\ell_1+\ell_j\; (j=2,3,4)$ lifts to a unique conic on $\mS$.
Necessarily this sits inside a hyperplane $H_\lambda=\{y=\lambda z\}$ 
containing the line $\{y=z=0\}$.
It remains  to rule out that the residual quartic
\begin{eqnarray}
\label{eq:pencil-26}
w^4-az^4+\lambda^3z^3w+\lambda x^3w = px^2zw
\end{eqnarray}
splits into two quadrics at some $\lambda$.
We spell out the lifting explicitly in some number field $K$ with uniformiser $\pi$ of
 $p$ which we can assume to be unramified. % in $K$.
For the rank 1 Galois module corresponding to Table \ref{tab:3},
we have quadrics
\[
w^2-\sqrt{a}z^2+\pi q_1(x,z,w), \;\;\; w^2+\sqrt{a}z^2+\pi q_2(x,z,w)
\]
with $\pi$-adically integral coefficients.
Upon multiplying, we compare coefficients with \eqref{eq:pencil-26}.
At $x^2zw$, we obtain $\pi^2\mid p$, i.e.~$p$ is ramified in $K$, contradiction.

Similarly, the rank 2 Galois modules from Table \ref{tab:3}
give quadrics
\[
w^2-(1+i)\sqrt[4]awz+i\sqrt{a}z^2+\pi q_1(x,z,w), \;\;\; w^2+(1+i)\sqrt[4]awz-i\sqrt{a}z^2+\pi q_2(x,z,w).
\]
As before, we expand and compare coefficients with \eqref{eq:pencil-26}.
Modulo $(\pi^2,w^2,z^3)$, we obtain
\begin{eqnarray}
\label{eq:coeff-26}
\lambda wx^3-px^2zw \equiv \pi (1+i) \sqrt[4]a (q_1-q_2)wz - \pi i \sqrt a z^2 (q_1-q_2).
\end{eqnarray}
From the coefficients of $x^2zw$ we derive that 
\[
q_1-q_2=ux^2+\hdots
\]
for some $\pi$-adic unit $u\in K$.
But then there is no term in \eqref{eq:coeff-26}
to compensate for $-\pi i\sqrt a ux^2z^2.$
This establishes the desired contradiction.
Thus $\rho(\mS)=26$ as claimed.

\subsection{Picard number 32}
\label{ss:32}

Consider the Delsarte surface given by
\[
S:\;\;\; w^5+xy^3z-xyz^3+x^3zw=0.
\]
It has five lines in the hyperplane $w=0$ and the following singularities:
\begin{itemize}
\item
$A_{14}$ at $[1,0,0,0]$;
\item
$A_4$ at $[0,1,0,0], [0,0,1,0], [0,1,1,0], [0,1,-1,0]$.
\end{itemize}
Together they generate a finite index sublattice of $\Pic(S)$ of rank $33$ and discriminant $11^2$;
here one uses the covering Fermat surface of degree $22$
which also implies that $\rho(S_p)=33$ for all $p\equiv 1\mod 22$.
We can deform the surface $S$ while preserving all singularities by monomials involving $x^2z^2$.
In particular, we find the following deformation which at the same time preserves three of the five lines on $S$:
\[
\mS:\;\;\; w^5+xy^3z-xyz^3+x^3zw=px^2yz^2.
\]
By construction, we have
\[
32\leq \rho(\mS)\leq\rho(\mS_p)=33.
\]
Since the quadric $y^2-z^2-pxz=0$ residual to the three lines in the hyperplane $w=0$ is irreducible over $\C$,
we deduce $\rho(\mS)=32$ from Corollary \ref{cor:2lines}
and Corollary \ref{cor:lift}.

\subsection{Picard number 36}
\label{ss:36}

Covered by the Fermat surface of degree $20$,
the Delsarte surface to deform is defined by
\[
S:\;\;\; yzw^3+xyz^3-xy^3z+x^4w=0.
\]
It contains 7 lines: 5 given by $xyz(z^2-y^2)=0$ at $w=0$ and two more at $x=0$.
We compute the following singularities:
$$
\begin{array}{c|c|c}
A_{12} & A_3 & A_2\\
\hline
[0,1,0,0], [0,0,1,0] & [0,0,0,1] & [0,1,1,0], [0,1,-1,0]
\end{array}
$$
Together these rational curves generate a sublattice of $\Pic(S)$ of rank $35$
which is two off $\rho(S)$.
Additionally we consider 4 quadrics 
which are components of the intersections of $S$ with the hyperplanes $y=\pm z$,
such as
\[
Q_\pm = 
\{ y+ z = x^2\pm yw
=0\}
\]
In total,
this gives generators of a finite index sublattice of $\Pic(S)$ of discriminant $625$.
We now deform $S$ using some prime $p\equiv 1\mod 20$ such that
$\rho(S_p)=37$ by Lemma \ref{lem:rho-Fermat}.
In detail, consider
\[
\mS:\;\;\; yzw^3+xyz^3-xy^3z+x^4w=px^2yz(y-z).
\]
One checks that all singularities are preserved as well as all 7 lines;
only one's equations are actually deformed to $\ell=\{w=y+z-px=0\}$.
Moreover, the quadrics in the hyperplane $y=z$ are also preserved,
so
\[
36\leq \rho(\mS)\leq\rho(\mS_p)=37.
\]
In order to prove that the first inequality is attained,
it remains to analyse the possible deformations of $Q_\pm$;
 these would be residual to  $\ell$ in some hyperplane
$\{y+z-px=\lambda w\}$ on $\mS$.
Note that both $Q_\pm$,
considered in $\PP^3$, meet the node at $[0,0,0,1]$.
Thus any lift from $\mS_p$ to $\mS$ would have to do so, too.
This implies $\lambda=0$,
but then the residual quartic $x^4+yw^2(px-y)$ is irreducible over $\C$.
Using Proposition \ref{prop:lift}, we thus conclude that the quartics $Q_\pm$ do not lift to $\mS$.
Hence $\rho(\mS)=36$ by Corollary \ref{cor:lift}.

\subsection{Picard number 34}
\label{ss:34}

In order to exhibit a quintic of Picard number 34,
we rescale the above Delsarte surface to the default coefficients and deform as follows:
\[
\mS:\;\;\; yzw^3+xyz^3+xy^3z+x^4w=px^3yz.
\]
The deformation preserves the singularities and 5 of the lines, so $\rho(\mS)\geq 34$.
On the other hand, $\rho(\mS_p)=37$ by Lemma \ref{lem:rho-Fermat},
and since the two remaining lines $\ell_1, \ell_2$ are deformed to the quadric
\[
\{w=y^2+z^2-px^2=0\}
\]
which is irreducible over $\C$, 
we already deduce $\rho(\mS)<37$ by Corollary \ref{cor:2lines} and Corollary \ref{cor:lift}.
To prove that $\rho(\mS)=34$,
we adapt Technique \ref{tech-2} to our situation
by replacing the Galois module  structure of $\Pic(\mS)$ by the module structure 
with respect to a
subgroup of $\Aut(\mS)$:

\begin{Technique}%[Picard number drop by one]
\label{tech-3}
Given an algebraic surface $S$ over some number field
with a finite subgroup $G\subset\Aut(S)$
and a prime $\p$ such that $\rho(S)=\rho(S_\p)$,
proceed as follows:
\begin{enumerate}
\item
Deform $S$ $\p$-adically to some surface $\mS$ such that $\mS_\p\cong S_\p$
and $G\hookrightarrow\Aut(\mS)$.
\item
Preserve some divisor classes such that 
they generate a $G$-invariant sublattice $L$ of $\Pic(\mS_\p)$ of corank $r\in\N$.
\item
Control the deformation of all irreducible $G$-submodules of 
the quotient $\Pic(\mS_\p)/L$ to infer $\rho(\mS)=\rho(S)-r$.
\end{enumerate}
\end{Technique}

In what follows, we will use the group 
\[
G=\{ \imath_1, \imath_2, \imath_3, \id\}\cong \Z/2\Z \times\Z/2\Z
\]
consisting of involutions leaving the projective coordinates $x, w$ invariant 
and acting on the projective coordinates $y,z$ as
\[
\imath_1(y,z) = (z,y),\;\; \imath_2(y,z) = (-y,-z),\;\; \imath_3(y,z)=(-z,-y).
\]
Clearly the 5 lines and the exceptional curves define a $G$-invariant submodule $L$ of $\Pic(\mS)$ of rank 34.
The quotient $\Pic(\mS_p)/L$ can be decomposed into rank one $G$-modules in terms of $S$.
Let $\zeta^4=-1, i=\zeta^2$ and the quadrics
$Q_{1}, Q_{2}\subset S$ be defined by
\[
Q_{1} = \{y=iz, x^2=\zeta yw=0\}, \;\;\; Q_{2} = \{y=-iz, x^2=\zeta^3 yw=0\}.
\]
Then the irreducible $G$-submodules of $\Pic(S)/L$ have the following generators and
induced $G$-action:
$$
\begin{array}{c|c|c|c}
\text{generator} & \imath_1^* & \imath_2^* & \imath_3^*\\
\hline
\ell_1-\ell_2 & -1 & 1 & -1\\
Q_{1}+Q_{2} & 1 & -1 & -1\\
Q_{1}-Q_{2} & -1 & -1 & 1
\end{array}
$$
Since $\Pic(S)\cong\Pic(S_p)$ and $\mS_p=S_p$,
all the above relations also hold in $\Pic(\mS_p)$.
Hence, in order to prove $\rho(\mS)=34$ using Technique \ref{tech-3},
it suffices to show that neither of the above generators lifts to $\mS$.
For the first one we have already seen this above (using Corollary \ref{cor:2lines}).
For the other two, this would amount to studying quartics $V$ which are sums of two skew quadrics.
Here the main obstruction is that since $V^2=-8$, 
the effectiveness argument from (the proof of) Proposition \ref{prop:lift}
does not apply.
Instead we will argue geometrically with the quotient surfaces $\mS/\imath_1$ and $\mS/\imath_3$.
Abstractly, each quotient can be seen to have $p_g=1$ since both invariant subspaces
$H^{2,0}(\mS)^{\imath_1^*}$ and $H^{2,0}(\mS)^{\imath_3^*}$ are 1-dimensional.
In accordance, the resolutions of the quotient surfaces turn out to be K3 surfaces, 
and quite similar ones indeed.

\subsubsection{1st K3 quotient}
\label{sss:1st}

For starters, we exhibit a birational model of the quotient $X = S/\imath_1$ in the affine chart $x=1$
and the invariant coordinates $v=y+z, u=yz$:
\begin{eqnarray}
\label{eq:1stK3}
X:\;\;\; uw^3+u(v^2-2u)+w=0.
\end{eqnarray}
There are several (equivalent) ways to compactify this affine model to a K3 surface,
for instance as a resolution of a quartic in $\PP^3$.
In fact, $X$ is then seen as a Delsarte quartic, and the methods of \ref{ss:del} show
that $\rho(X)=18$.
For our purposes, it will be most beneficial to interpret $X$ as an elliptic surface over $\PP^1_w$.
Directly, we can regard \eqref{eq:1stK3} as a cubic in $u,v$
with two rational points at $\infty$ (with $u=0$ resp. $v=0$)
and another rational point $(u,v)=(-2/w^2, \sqrt{-2}/w)$ given by the image curve of $Q_1$.
Homogenising \eqref{eq:1stK3} by the additional variable $s$, we immediately obtain a cubic in Weierstrass form.
Normalising and twisting to make the above section rational over $\Q$,
we arrive at
\begin{eqnarray}
\label{eq:1st-2}
X:\;\;\; v^2 = s(s^2+2w^3s-8w).
\end{eqnarray}
This has reducible fibres of type $I_{10}^*$ at $w=\infty$ and $III$ at $w=0$
and a two-torsion section $(0,0)$.
Together with the section $P=(-2w^3,4w^2)$ of height $5/2$ induced by $Q_1$,
these curves generate $\Pic(X)$ of discriminant $-5$.

Deforming $S$ to $\mS$, the above computations go through as before
with the affine deformation factor $pu$.
The resulting Weierstrass form for $\mX=\mS/\imath_1$ reads
\[
\mX:\;\;\; v^2 = s(s^2+2(w^3-p)s-8w).
\]
Here two-torsion section and $I_{10}^*$ fibre at $\infty$ are preserved
while the type $III$ fibre at $w=0$ deforms to Kodaira type $I_2$.
Thus $\rho(\mX)\geq 17$ by the Shioda-Tate formula as should be,
and $\rho(\mX_p)=\rho(X_p)=18$ by construction.
It remains to rule out $\rho(\mX)=18$.
To this end, we will show that the section $P$ on $X$, and thus on $\mX_p$, does not lift to $\mX$ and 
apply Corollary \ref{cor:lift}.
By the theory of elliptic surfaces with section
(as recorded in the Mordell-Weil lattices \cite{ShMW}),
any divisor lifting $P$ would necessarily induce a unique section  of exactly the same shape (over $\Q$!).
Thus we can check explicitly whether $P$ lifts to any given order from $\mX_p$ to $\mX$.
A straight-forward computation involving only linear equations 
reveals that lifting works up to order $p^4$, but not anymore for $p^5$.
Thus $\rho(\mX)=17$ as claimed.
(Alternatively one could try to solve directly for a section of the prescribed shape on $\mX$ over $\C$,
but the system of equations becomes quite complicated.)
In consequence the quartic $Q_1+Q_2$ cannot lift from $\mS_p$ to $\mS$,
i.e.~ the second $G$-module does not lift.

\subsubsection{2nd K3 surface}
\label{sss:2nd}

For the second quotient surface $\mS/\imath_3$, 
a completely analogous argument applies.
In fact, the equations in terms of the invariant coordinates $u=yz, v=y-z$ 
show a striking similarity to those of $\mS/\imath_1$ with only one sign changed:
\[
\mS/\imath_3:\;\;\; uw^3+u(v^2+2u)+w=pu.
\]
In particular, the central fiber $S/\imath_3$ is $\bar\Q$-isomorphic to $X$ (with Picard number 18),
and the same computations with some sign adjustments prove that $\rho(\mS/\imath_3)=17$.
Equivalently, the third $G$-module does not lift from $\mS_p$ to $\mS$.

\subsubsection{Summary}
We have proved that neither of the three irreducible $G$-modules
comprising $\Pic(\mS_p)/L$
lifts from $\mS_p$ to $\mS$.
Hence we deduce that $\rho(\mS)=34$ by Technique \ref{tech-3}.

\subsection{Picard number 38}
\label{ss:38}

Up to isomorphism, there is a
unique Delsarte quintic with Picard number 39 by \cite{S-quintic}.
It is possible to derive quintics with Picard number 38 from this quintic,
but the techniques which we employ
do not exclusively use arithmetic deformations,
and the example we found are not defined over $\Q$.
Therefore we decided to postpone the detailed treatment of these interesting quintics
to a future occasion and pursue a different approach here.
The starting point is a Delsarte surface of Picard number 41:
\[
S:\;\;\; zw^4+xy^4-xyz^3+x^3yz = 0.
\]
From the dominating Fermat surface of degree 24,
one deduces $\rho(S)=41$.
The following curves generate a finite index sublattice of $\Pic(S)$
of discriminant $2^43^6$:
\begin{itemize}
\item
the 4 lines $\ell_{x,z}, \ell_{x,w}, \ell_{y,z}, \ell_{y,w}$ where the two coordinates in the subscript vanish;
\item
the 4 lines at $\{z-x=w^4+y^4=0\}$;
\item
the 4 lines at $\{z+x=w^4-y^4=0\}$;
\item
the exceptional curves at the $A_{15}$ singularity at $[1,0,0,0]$;
\item
the exceptional curves at the six $A_3$ singularities at $[0,0,1,0], [1,0,\pm 1,0]$ and $[0,1,r,0] \; (r^3=1)$.
\end{itemize}
In order to apply Technique \ref{tech-2},
we pick a prime $p\equiv 1\mod 24$,
rescale the equation by some $p$-adic integer $a\in\Q$ such that $u^4-a$ is irreducible in $\F_p[u]$
and deform preserving singularities and the first 8 lines:
\[
\mS:\;\;\; zw^4+axy^4-xyz^3+x^3yz = pxyzw(z-x).
\]
By construction, one has $\rho(\mS)\geq 38$, and we claim that this is in fact an equality
(generically as well as for specific choices of $p$ and $a$).
To see this, note that $\rho(\mS_p)=\rho(S_p)=41$,
where the four lines other than $\ell_{x,z}$ at $\{x+z=0\}$ form a single $\Gal(\F_{p^4}/\F_p)$-orbit 
on $S_p=\mS_p$.
As in \ref{ss:2}, the assumption $\rho(\mS)>38$ implies that some quadric comprising two of these lines
lifts from $\mS_p$ to $\mS$.
Necessarily, this quadric sits inside some hyperplane $H_\lambda=\{x=\lambda z\}$ containing $\ell_{x,z}$.
Here $\lambda\neq 0,1,\infty$ by construction.
We study the pencil 
%\[
%\pi: \;\; \mS \to \PP^1
%\]
of residual quadrics
\begin{eqnarray}
\label{eq:38-2}
C_\lambda:\;\;\; w^4+\lambda a y^4+\lambda (\lambda^2-1)yz^3 - p \lambda(1-\lambda)yz^2w = 0.
\end{eqnarray}

\begin{Lemma}
Over $\Q$, the pencil $\{C_\lambda\}$ has generically no reducible members outside $\lambda=0,1,\infty$.
\end{Lemma}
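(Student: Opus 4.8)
The plan is to prove directly the (slightly sharper) statement that $C_\lambda$ is geometrically irreducible, i.e.\ irreducible over $\bar\Q$, for \emph{every} $\lambda\in\PP^1$ with $\lambda\neq 0,1,\infty$, the parameters $a,p\in\Q$ being nonzero as fixed above. This contains the asserted generic irreducibility, and -- since the discussion preceding the lemma reduces the lifting of a pair of the four conjugate lines to the existence of a degree-two component of some $C_\lambda$ (via Proposition~\ref{prop:lift}) -- it is exactly what is needed to conclude $\rho(\mS)=38$.

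The key observation is that $C_\lambda$, viewed as a ternary quartic in $\PP^2_{[y,z,w]}$, has the rigid shape
\[
C_\lambda \;=\; w^4 - A\,w + B, \qquad A = p\lambda(1-\lambda)\,yz^2,\quad B = \lambda y\bigl(a y^3 + (\lambda^2-1)z^3\bigr),
\]
so that the coefficient of $w^4$ is the constant $1$ while those of $w^3,w^2$ vanish. Because the leading coefficient in $w$ is a nonzero constant, any factorisation of $C_\lambda$ into homogeneous forms of positive degree is forced to respect the grading in $w$: the $w$-degree of each factor equals its total degree, so up to a scalar each factor is monic in $w$ of degree equal to its total degree. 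This leaves only two cases. Either $C_\lambda$ has a linear factor $w-r(y,z)$, which happens iff $r^4-Ar+B=0$; writing $r=\mu y+\nu z$ and comparing the coefficients of $z^4$ and then of $y^2z^2$ forces first $\nu=0$ and then $\mu=0$, i.e.\ $r=0$, contradicting $C_\lambda(y,z,0)=B\neq 0$. Or $C_\lambda=(w^2+\alpha w+\beta)(w^2-\alpha w+\gamma)$ with $\alpha$ a nonzero linear form and $\beta,\gamma$ quadratic (the opposite signs being forced by the vanishing $w^3$-coefficient); eliminating $\beta,\gamma$ gives $\alpha^2(\alpha^4-4B)=A^2$, so the irreducible linear form $\alpha$ divides $A=p\lambda(1-\lambda)yz^2$ and hence is a scalar multiple of $y$ or of $z$. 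If $\alpha\propto z$, the coefficient of $z^6$ forces $\alpha=0$; if $\alpha\propto y$, the coefficient of $y^3z^3$ forces $\lambda\in\{0,1,-1\}$, and the single new value $\lambda=-1$ (where $A=-2pyz^2$, $B=-ay^4$) is ruled out by the same monomial comparison in $\alpha^2(\alpha^4+4ay^4)=4p^2y^2z^4$ since $p\neq 0$. This exhausts all possibilities.

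I do not expect a genuine obstacle here. The whole content is the structural remark that a trivial top coefficient in $w$ makes every factorisation respect the $w$-grading; after that the lemma collapses to a few coefficient comparisons. The one point worth a little care is the quadratic--quadratic case, where it is the divisibility $\alpha\mid A$ -- rather than a brute-force discriminant computation -- that pins $\alpha$ down to $y$ or $z$ and thereby makes the finitely many potentially dangerous values of $\lambda$ explicit, leaving exactly $\lambda=-1$ (which is, incidentally, the hyperplane $\{x+z=0\}$ carrying the four conjugate lines on $\mS_p$) to be checked separately. If one wants only the lemma as literally phrased, it of course suffices to exhibit a single smooth, hence irreducible, member of the pencil and invoke semicontinuity; the graded-factorisation argument costs barely more and yields the stronger form used afterwards.
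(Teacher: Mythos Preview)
Your proof is correct and takes a genuinely different route from the paper's.

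The paper argues indirectly: it locates the singular members of the pencil (a necessary condition for reducibility) as the roots of the degree-8 polynomial $3^9a(\lambda+1)^8-p^{12}\lambda^3(\lambda-1)^4$, observes that this is irreducible (being linear in $a$), and then notes that if any one of these eight fibres were reducible, Galois conjugacy would force all eight to be, which via the Shioda--Tate-type inequality of Lemma~\ref{lem:st} would push $\rho(\mS)\geq 46$, contradicting $\rho(\mS)\leq\rho(\mS_p)=41$. This yields the generic statement in $a$, and the paper then verifies a specific choice $(p,a)=(73,5)$ by checking irreducibility of the octic directly.

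Your argument is a direct elementary factorisation exploiting the special shape $w^4-Aw+B$: since the leading $w$-coefficient is a unit, any homogeneous factor is forced to be monic in $w$ of $w$-degree equal to its total degree, and the linear and quadratic cases are then disposed of by short monomial comparisons. This is both simpler and stronger: it works for every nonzero $a,p$ and every $\lambda\neq 0,1,\infty$ at once, so the subsequent choice of a specific $(p,a)$ to make the discriminant irreducible becomes unnecessary. (Incidentally, in the quadratic case with $\alpha\propto y$ you can shortcut even further: the monomial $y^2z^4$ appears on the right of $\alpha^2(\alpha^4-4B)=A^2$ with nonzero coefficient $p^2\lambda^2(1-\lambda)^2$ but not at all on the left, so the contradiction is immediate without ever reaching $\lambda=-1$.) What the paper's approach buys is a template --- discriminant irreducibility plus a global Picard bound --- that transfers to pencils whose members do not have such a rigid monomial structure; your approach buys a self-contained proof that does not invoke Lemma~\ref{lem:st} or the bound $\rho(\mS_p)=41$.

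As a side remark, your parenthetical identification of $\lambda=-1$ with the hyperplane $\{x+z=0\}$ is consistent with the computation of $C_\lambda$ and implicitly corrects a slip in the paper's text, where the pencil of hyperplanes should read $H_\lambda=\{x=\lambda z\}$ containing $\ell_{x,z}$ rather than $\{y=\lambda z\}$.
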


\begin{proof}
We start by computing the singular members of the pencil $\{C_\lambda\}$.
To this end, we factor the partial derivative of \eqref{eq:38-2} with respect to $z$:
\[
\lambda(\lambda-1)yz(3(\lambda+1) z+2pw) = 0.
\]
The first two solutions $y=0$ and $z=0$ lead to known singular fibers at $\lambda=0,1,\infty$,
so we concentrate on the third linear factor.
Substituting into the partial derivative of \eqref{eq:38-2} with respect to $w$,
we obtain twice again the solution $w=0$ plus
\[
9(\lambda+1)^2w+p^3\lambda (\lambda-1)y=0.
\]
Substituting both equations into \eqref{eq:38-2},
we deduce that the pencil $C_\lambda$ has singular fibers at the roots of the following degree 8 polynomial in $\lambda$
\begin{eqnarray}
\label{eq:disc-38}
3^9a(\lambda+1)^8-p^{12}\lambda^3(\lambda-1)^4.
\end{eqnarray}
As a linear polynomial in $a$, this is clearly irreducible.
Hence, generically, the singular fibers at these roots do all take the same shape; if they were to be reducible,
then they would contribute at least 8 to the formula from Lemma \ref{lem:st}.
Together with the contributions from the 4 lines at $\lambda=1$ and from the singularities,
this would give generically
\[
\rho(\mS)\geq 2 + 3 + 33 + 8 = 46
\]
with the required contradiction.
\end{proof}

For an explicit irreducible example of \eqref{eq:disc-38} over $\Q$, 
one can choose $a=5, p=73$, for instance.
It follows that the pencil $\{C_\lambda\}$ on the corresponding quintic $\mS$
has no reducible fibres outside $\lambda=0,1,\infty$.
In particular, none of the quadrics in question may lift from $\mS_p$ to $\mS$.
Hence $\rho(\mS)=38$ as required.

\section{Picard numbers 6 through 8}
\label{s:6-8}

So far, we have applied the arithmetic deformations technique exclusively to Delsarte surfaces
because those allow for effective control over the Picard numbers of the reductions.
This method does not apply to Picard numbers 6 through 8
since these are too far away from the next greater Picard number of a Delsarte surface which is 13 (cf.~Section \ref{s:9-12}).
Instead we work out a quintic (at first resembling the Delsarte surface of Picard number 13
without the singularity) with Picard number 9 over $\bar\F_5$ (and $\bar\Q$)
and apply Techniques \ref{tech-1} and \ref{tech-3} to it.
The general idea is to work with a quintic of the shape
\[
S:\;\;\; f(x,y) = zwg(x,y,z,w)
\]
for homogeneous polynomials $f, g$ of degree $3, 5$,
since the ten lines at $zw=0$ generate a sublattice $V\subset\Pic(S)$ of rank 9.
%both over $\bar\Q$ and any other algebraically closed field.
We will prove $\rho=9$ for some specific $S$
by using reduction modulo $p$ and the specialisation embedding \eqref{eq:spec}.
Recall from \ref{ss:spec} 
that an upper bound for the Picard number of $S\otimes\bar\F_p$ is read off
from the characteristic polynomial of Frobenius on $\het{S\otimes\bar\F_p} \, (\ell\neq p)$.
This, in turn, can,
at least in principle,
be computed from point counts over finite fields using Lefschetz' fixed point formula.
However, for a characteristic polynomial of degree $44$, this would require counting as deep as $\F_{p^{22}}$
which is computationally infeasible.
One could try to overcome this by appealing to $p$-adic cohomology techniques (cf.~\cite{AKR}),
but we decided to rather endow $S$ with some additional structure that would allow us to run the computations
directly.

Crucially, we will use (finite order) automorphisms on $S$.
Here it is quite common to use purely non-symplectic automorphism 
(acting faithfully on $H^{2,0}(S)$ with all eigenvalues 
being roots of unity of the same order $n$).
We have explained in \ref{ss:9}
how such an automorphism affects the transcendental lattice.
For the purely non-symplectic automorphism to commute with the Galois group, however, 
the base fields tend to be rather big
(starting from $\F_{11}$ for $n=5$ as in \ref{ss:9}, for instance).
These computational drawbacks can be remedied by considering wild automorphisms.
By definition, these act trivially on $H^{2,0}(S)$, yet they allow
to decompose the cohomology as explored in \cite{S-wild}
while commuting with the action of the Galois group.

\subsection{Wild automorphism}
\label{ss:wild}

Let $p=5$ and consider the quintic surface
\begin{eqnarray}
\label{eq:zwg}
S:\;\;\; y^5-x^4y = zwg(x,z,w)
\end{eqnarray}
 over $\F_5$
for some irreducible homogeneous cubic polynomial $g\in\F_p[x,z,w]$.
%By the Jacobi criterion, $S$ is smooth
%if $g(0,0,1)\neq 0\neq g(0,1,0)$ and $g$ does not vanish quadratically at any point in $\PP^2(\bar\F_p)$.
%NO, MORE COMPLICATED
Clearly $S$ admits a wild automorphism of order 5:
\[
\varphi: [x,y,z,w] \mapsto [x,y+x,z,w].
\]
Since $\varphi$ and $\Fr_p$ commute,
we can diagonalise their induced action on $\het{S\otimes\bar\F_p}$ simultaneously.
Generally the only $\varphi^*$-invariant class is the hyperplane section.
The orthogonal complement $U$ of $V$, written after a Tate twist as
\[
U = (V\otimes\Q_\ell(-1))^\perp\subset \het{S\otimes\bar\F_p},
\]
decomposes into four 11-dimensional eigenspaces under $\varphi^*$.
Following \cite{S-wild}
the characteristic polynomial of $\Fr_q^* \, (q=p^r)$ on each of them can be computed from counting
$\# \mbox{Fix}((\varphi^j)^*\circ\Fr_q^*)$ for $j=0,\hdots,4$;
here it is computationally very advantageous
that each fixed point set can be computed directly over $\F_q$.
With these preparations, it should be possible 
to compute the characteristic polynomials of $\Fr_p^*$
on each eigenspace from point counts as deep as $\F_{p^6}$
with machine help and enough patience,
but this still requires implementing Poincar\'e duality and solving a system of 22 non-linear equations
in 20 variables (analogous to what we will achieve in smaller dimension in \ref{ss:9-5}).
We omit the details here and rather specialise to a situation that is yet preferable from our view point
of arithmetic deformations.

\subsection{Extra involution}

We shall now endow the quintic $S$ with even more structure by postulating
that the cubic polynomial $g$ be symmetric in $z,w$.
On $S$, this symmetry induces the involution
\[
\imath: [x,y,z,w] \mapsto [x,y,w,z]
\]
which commutes with both, $\varphi$ and $\Fr_p$.
The $\imath^*$-invariant part of cohomology can be studied
through the quotient surface $S/\imath$,
or rather the minimal resolution $X$ of the $A_1$ singularity resulting from the fixed point at $[0,0,1,-1]$.
The 5 invariant sums of lines together with the exceptional curve
give a sublattice $V_+\subset\Pic(X)$ of rank 6,
so
\[
\rho(X)\geq 6.
\]
In fact, $X$ is a K3 surface.
To see this, express it 
affinely (in the chart $x=1$) as a double sextic in the invariant coordinates $y, u=z+w, v=zw$:
\[
X: \;\;\; y^5-y = v \hat g(u,v).
\]
This representation also confirms that the wild automorphism $\varphi$ descends to $X$.
In particular, this means that the arguments from \ref{ss:wild} about decomposing cohomology into
eigenspaces carry over to $X$.
In particular, we can compute the characteristic polynomial of $\Fr_p^*$ on the $\varphi^*$-eigenspaces inside
\[
U_+ = (V_+\oplus\Q_\ell(-1))^\perp\subset\het{X\otimes\bar\F_p}
\]
from point counts over $\F_p,\hdots,\F_{p^4}$.
This puts us in the position to work towards quintics of Picard number 6 to 8.

\subsection{A K3 surface with Picard number 6 over $\bar\F_5$}
\label{ss:K3-6}

We shall now specialise the above discussion to the cubic polynomial
\[
g = x^3+(x-z-w)(z^2+zw+w^2).
\]
One directly verifies that the quintic $S$ is indeed smooth, and so is the K3 surface $X$.
We start by computing the Picard number of $X$.
Note that $\hat g=1-u^3+vu+u^2-v$.

\begin{Lemma}
\label{lem:9-6}
Over $\bar\F_5$, the K3 surface $X$ has $\rho(X)=6$.
\end{Lemma}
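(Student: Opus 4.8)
The lower bound $\rho(X\otimes\bar\F_5)\ge 6$ is already in hand: the five invariant sums of lines and the exceptional curve above $[0,0,1,-1]$ generate the rank-$6$ sublattice $V_+\subset\Pic(X)$, and all six of these curves are defined over $\F_5$. Because they are $\F_5$-rational, $\Fr_5^*$ acts on $V_+\otimes\Q_\ell$ as multiplication by $5$, so after the Tate twist $V_+\otimes\Q_\ell(1)$ lies in the eigenvalue-$1$ eigenspace of $\Fr_5^*$ on $H^2_\text{\'et}(X\otimes\bar\F_5,\Q_\ell(1))$. The plan is therefore to prove that on the $16$-dimensional orthogonal complement $U_+$ the operator $\Fr_5^*$ has no eigenvalue equal to $5$ times a root of unity. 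Granting this, the union over $k$ of the $\Fr_{5^k}^*$-eigenspaces with root-of-unity eigenvalue in $H^2_\text{\'et}(X\otimes\bar\F_5,\Q_\ell(1))$ is exactly $V_+\otimes\Q_\ell(1)$, of dimension $6$, so $\rho(X\otimes\bar\F_5)\le 6$ as in \eqref{eq:number}. This upper bound is unconditional; the Tate conjecture is not needed, the matching lower bound being supplied by the explicit curves in $V_+$.

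To control $\Fr_5^*$ on $U_+$ I would exploit the wild automorphism $\varphi\colon[x,y,z,w]\mapsto[x,y+x,z,w]$, which descends to $X$ and commutes with $\Fr_5^*$. Following \cite{S-wild}, $\varphi^*$ acts on $U_+$ through its four primitive fifth-root-of-unity eigenvalues, each with multiplicity $4$; in particular $\varphi^*$ has no invariant vectors in $U_+$, and $\Fr_5^*$ preserves each of the four $4$-dimensional eigenspaces $U_{+,j}$. For $j\in\{0,\dots,4\}$ and $r\in\{1,\dots,4\}$ one counts $\#\mathrm{Fix}\big((\varphi^j)^*\circ\Fr_{5^r}^*\big)$, which by the Lefschetz fixed point formula equals $1+5^{2r}+\tr\big((\varphi^j)^*\Fr_{5^r}^*\mid H^2_\text{\'et}(X\otimes\bar\F_5,\Q_\ell)\big)$; the decisive computational point, taken from \cite{S-wild}, is that each of these fixed-point counts reduces to a point count over $\F_{5^r}$, hence is feasible for $r\le 4$ (that is, as deep as $\F_{625}$). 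Subtracting the explicit $V_+$-contribution --- determined by the intersection matrix of the six generators and by the way $\Fr_5$ and $\varphi$ act on these $\F_5$-rational curves --- and inverting the finite Fourier transform in $j$ yields the traces $\tr(\Fr_{5^r}^*\mid U_{+,j})$ for $r\le 4$; by Newton's identities these determine the degree-$4$ characteristic polynomial of $\Fr_5^*$ on each $U_{+,j}$, hence the degree-$16$ characteristic polynomial $\psi(T)$ of $\Fr_5^*$ on $U_+$. The functional equation of $\psi$ coming from Poincar\'e duality on $U_+$ serves as a consistency check.

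The remaining step, which is the actual content of the lemma, is a machine computation verifying that $\psi(5T)$ has no cyclotomic factor, i.e.\ that none of the sixteen Frobenius eigenvalues on $U_+$ is $5$ times a root of unity. Once this is confirmed, $U_+$ contributes nothing to $\rho(X\otimes\bar\F_{5^k})$ for any $k$, and $\rho(X\otimes\bar\F_5)=6$ follows. The main obstacle is thus entirely computational: organising the point counts over $\F_5,\F_{25},\F_{125},\F_{625}$ for the five twisted Frobenius operators, keeping the $V_+$- and $\varphi^*$-bookkeeping straight, and checking the absence of a cyclotomic factor at the very end --- the whole statement rests on this last numerical fact, since should it fail one would instead obtain $\rho(X\otimes\bar\F_5)>6$.
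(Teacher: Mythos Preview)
Your proposal is correct and follows essentially the same approach as the paper: both use the wild automorphism $\varphi$ to decompose $U_+$ into $\varphi^*$-eigenspaces, compute the characteristic polynomial of $\Fr_5^*$ via point counts of $\#\mathrm{Fix}((\varphi^j)^*\circ\Fr_{5^r}^*)$ for $r\le 4$, and verify that the resulting degree-$16$ polynomial is irreducible and not of cyclotomic shape. The paper's proof is terser---it simply displays the explicit polynomial $\chi_p^+(\lambda)$ and checks irreducibility---whereas you spell out the mechanics (Fourier inversion in $j$, Newton's identities, subtraction of the $V_+$-contribution) more carefully, but the underlying method is the same.
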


\begin{proof}
As explained in \ref{ss:wild},
we can compute $\# \mbox{Fix}((\varphi^j)^*\circ\Fr_{p^i}^*)$ for $j=0,\hdots,4, i=1,\hdots,4$,
to derive the characteristic polynomials of $\Fr_p^*$ on the eigenspaces of $\varphi^*$ inside $U_+$.
Overall we obtain the characteristic polynomial $\chi_p^+(\lambda)$ of $\Fr_p^*$ on $U_+$:
\begin{eqnarray*}
\chi_p^+(\lambda) & = & 
\lambda^{16}+2p^3\lambda^{13}+p^3\lambda^{12}+6p^4\lambda^{11}
+2p^6\lambda^{10}+4p^6\lambda^9+9p^7\lambda^8\\
&& \;\;\; +4p^8\lambda^7+2p^{10}\lambda^6
+6p^{10}\lambda^5+p^{11}\lambda^4+2p^{13}\lambda^3+p^{16}.
\end{eqnarray*}
This polynomial is irreducible over $\Z$ and not of cyclotomic shape
(i.e.~arising from a cyclotomic polynomial in $\lambda$ 
via the substitution $\lambda\mapsto\lambda/p$ and clearing denominators).
Hence $U_+$ cannot contain any algebraic classes,
and the lower bound $\rho(X)\geq 6$ is attained.
\end{proof}

\subsection{A quintic with Picard number 9 over $\bar\F_5$}
\label{ss:9-5}

We continue with the quintic $S$ and $p=5$.
Recall that we are concerned with the 44-dimensional Galois representation $U\subset\het{S\otimes\bar\F_p}$.
Here the induced action of $\imath^*$ decomposes $U$ into $(\pm 1)$-eigenspaces.
Lemma \ref{lem:9-6} shows
that the 16-dimensional eigenspace
\[
U^{\imath^*=1} = U_+
\]
does not contain any algebraic classes.
Hence it remains to consider the $28$-dimensional eigenspace $U^{\imath^*=-1}$ 
in order to prove the following result:

\begin{Proposition}
\label{prop:9}
Over $\bar\F_5$, the quintic surface $S$ has $\rho(S)=9$.
\end{Proposition}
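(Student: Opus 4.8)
The plan is to bound $\rho(S\otimes\bar\F_5)$ from above by the number of Frobenius eigenvalues on $\het{S\otimes\bar\F_5}$ that are of cyclotomic shape, as in \eqref{eq:number} (no specialisation is needed here, since $S$ is already smooth over $\F_5$). All ten lines at $zw=0$ are $\F_5$-rational, because $y^5-x^4y$ splits over $\F_5$ as $y(y-x)(y+x)(y-2x)(y+2x)$; hence $\Fr_p^*$ acts trivially on $V\otimes\Q_\ell$, which accounts for exactly $9$ cyclotomic eigenvalues and matches the lower bound $\rho(S)\ge 9$. It therefore remains to show that the orthogonal complement $U$, the $44$-dimensional Galois representation of \ref{ss:wild}, contributes no further cyclotomic eigenvalue. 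By Lemma \ref{lem:9-6} the $16$-dimensional invariant part $U_+=U^{\imath^*=1}$ already contributes none, so everything reduces to the anti-invariant part $U_-:=U^{\imath^*=-1}$, of dimension $28$: one must check that its characteristic polynomial $\chi_p^-(\lambda)$ for $\Fr_p^*$ is not of cyclotomic shape, in the sense of Lemma \ref{lem:9-6}.

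To get at $\chi_p^-$ I would run the wild-automorphism computation of \ref{ss:wild} and \ref{ss:K3-6} once more. Since $\varphi^*$, $\imath^*$ and $\Fr_p^*$ pairwise commute and $\varphi^*$ has no nonzero invariant in $U$ (the hyperplane class lies in $V$), the space $U_-$ splits into four $7$-dimensional $\varphi^*$-eigenspaces; these are permuted, together with their characteristic polynomials, by $\Gal(\Q(\zeta_5)/\Q)$, and cup product pairs the $\zeta$-eigenspace perfectly with the $\zeta^{-1}$-eigenspace. Hence $\chi_p^-$ is governed by a single monic degree $7$ polynomial over $\Z[\zeta_5]$. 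Its traces $\tr(\Fr_{p^i}^*)$ on the individual $\varphi^*$-eigenspaces of $U_-$, for $i=1,\dots,4$, are extracted by the usual character sums over $j=0,\dots,4$ from the fixed point counts $\#\mbox{Fix}((\varphi^j)^*\circ\Fr_{p^i}^*)$ on $S$ and on the K3 surface $X=S/\imath$, subtracting the data of $X$ (with due care for the fixed curve of $\imath$ and for the exceptional curve of the $A_1$ resolution) to isolate the $\imath^*=-1$ contribution. As each such quantity is a plain point count, the computation stays within $\F_{5^4}$. The remaining coefficients of the degree $7$ factor are then pinned down by the functional equation forced by Poincar\'e duality with the correct $p$-weights, which amounts to solving a small polynomial system as indicated in \ref{ss:wild}; multiplying out the four Galois-conjugate factors yields $\chi_p^-(\lambda)$ explicitly, of degree $28$.

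Finally one checks, by a direct factorisation over $\Z$, that $\chi_p^-(\lambda)$ is irreducible and not of cyclotomic shape. Then $U_-$ carries no cyclotomic eigenvalue either, so the bound \eqref{eq:number} gives $\rho(S\otimes\bar\F_5)\le 9$; together with the nine classes in $V$ this forces $\rho(S\otimes\bar\F_5)=9$, as claimed.

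The main obstacle I expect is neither the point counting — which the wild automorphism keeps within reach of $\F_{5^4}$ — nor the concluding irreducibility check, but the bookkeeping in between: separating the $\imath^*=-1$ part of the Lefschetz data by comparing the fixed-point counts on $S$ and on $X$ (accounting correctly for the fixed locus of $\imath$ and for the resolution), and then solving the system that reconstructs $\chi_p^-$ from the $\varphi^*$-eigenspace traces subject to Poincar\'e duality. This is precisely the step only sketched for the $44$-dimensional representation in \ref{ss:wild}, and it would have to be carried out in full here.
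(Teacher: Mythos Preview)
Your proposal is correct and follows essentially the same route as the paper: decompose $U$ under $\imath^*$, invoke Lemma \ref{lem:9-6} for $U_+$, and for $U_-$ exploit the $\varphi^*$-eigenspace decomposition into four Galois-conjugate degree $7$ factors, determining the first four coefficients from point counts up to $\F_{5^4}$ and the remaining three (over $\Z[\zeta_5]$, i.e.\ twelve over $\Z$) via Poincar\'e duality, then checking irreducibility and non-cyclotomic shape of $\chi_p^-$.

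One point of calibration: what you call ``a small polynomial system'' is in the paper a system of $14$ equations in $12$ integer unknowns, nonlinear of degree up to $4$ with large coefficients, and it does not yield to a direct Gr\"obner-type attack. The paper resolves it by a $2$-adic Newton-style iteration: an exhaustive search over $\F_2^{12}$ finds four candidate solutions modulo $2$, and successive lifting (solving a $14\times 12$ linear system over $\F_2$ at each step, with possible branching) kills three of them by accuracy $2^5$ and stabilises the fourth to a unique $2$-adic limit by accuracy $2^{23}$, which is enough to read off the integer coefficients via the Weil bounds. You correctly flag this as the main obstacle, but be aware that executing it requires this extra idea beyond straightforward elimination.
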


\begin{proof}
We want to compute the characteristic polynomial $\chi_p^-(\lambda)$
of $\Fr_p^*$ on $U_-$,
or rather on the eigenspaces under $\varphi^*$.
We attack this problem by computing the first 4 coefficients
by point counts over $\F_p,\hdots,\F_{p^4}$ as explained in \ref{ss:wild}
and subtracting the contribution from $U_+$ computed in \ref{ss:K3-6}.
This leaves three coefficients over $\Z[\zeta] \, (\zeta\in\mu_5)$ and their conjugates;
that is, over $\Z$ there are 12 coefficients missing from $\chi_p^-(\lambda)$.
To determine these coefficients,
we use Poincar\'e duality for $U_-$;
the functional equation for $\chi_p^-(\lambda)$ gives
14 equations in the 12 unknowns with rather big coefficients over $\Z$.
Note that these equations are not linear, but have degree up to 4,
so it is a highly non-trivial task to solve the system of equations.

We use a $2$-adic method building on the fact that we know a priori the existence of a solution over $\Z$.
First we determine, by an exhaustive search, all solutions over $\F_2^{12}$;
there are 4 in number.
Then we iterate to increase the $2$-adic accuracy;
this principle  resembles  the $2$-adic Newton iteration in \cite[Algorithm 9]{ES},
but presently the situation is slightly different since
at each step increasing the $2$-adic accuracy by a single power of $2$
we have to solve a system of  linear equations of size $14\times 12$ over $\F_2$.
In particular, the solution space need not be 0-dimensional,
and at each step we have to check whether any solution in the affine linear space
allows for increasing the accuracy.

For three of the initial solutions modulo $2$,
this interation terminates after at most four steps (accuracy $2^5$).
Meanwhile for the remaining initial spolution in $\F_2^{12}$, we find 2-dimensional affine solution spaces at each step,
but from accuracy $2^6$ onwards,
there is always a unique point in the solution space
which allows for an increase of accuracy.
We stopped the computations at accuracy $2^{23}$.
Using the Weil conjectures and the bounds which they impose on the coefficients of the characteristic polynomials
(roughly estimated by considering separately real part and imaginary part),
this accuracy suffices to read off the coefficients over $\Z$.
We express the solution as follows (related to the Tate twist $U_-(1)$):
\begin{eqnarray*}
p^{-25} \chi_p^-(5\lambda) & = & 
125\,{\lambda}^{28}+50\,{\lambda}^{26}+25\,{\lambda}^{25}+5\,{\lambda}^{24}+30\,{\lambda}^{23}+125\,{\lambda}^{22}
\\
&&
\mbox{} -85\,{\lambda}^{21}+171\,{\lambda}^{20}  -89\,{\lambda}^{19}
-65\,{\lambda}^{18}-105\,{\lambda}^{17} -24\,{\lambda}^{16}
\\
&&
\mbox{}
-165\,{\lambda}^{15} +129\,{\lambda}^{14}-165\,{\lambda}^{13}-24\,{\lambda}^{12}
-105\,{\lambda}^{11}\\
&&
\mbox{}
-65\,{\lambda}^{10}-89\,{\lambda}^{9}+171\,{\lambda}^{8}
-85\,{\lambda}^{7}+125\,{\lambda}^{6}+30\,{\lambda}^{5}
\\
 &&
\mbox{}+5\,{\lambda}^{4}
+25\,{\lambda}^{3}+50\,{\lambda}^{2}+ 125.
\end{eqnarray*}
One checks that the above polynomial is irreducible over $\Q$.
Since it is  not of cyclotomic shape,
$U_-$ cannot contain any algebraic classes, either.
In conclusion, neither does $U$,
and the lower bound $\rho(X)\geq 9$ is attained.
\end{proof}

\subsection{Picard number 8}
\label{ss:8}

We deform the surface $S$ over $\Q$ such that 8 lines are preserved:
\[
\mS:\;\;\; y(y^2+x^2)(y^2-x^2+pw^2) = zw(x^3+(x-z-w)(z^2+zw+w^2)).
\]
Thus $\rho(\mS)\geq 8$. On the other hand, 
\[
\rho(\mS)\leq\rho(\mS_p)=\rho(S_p)=9,
\]
so we can apply Corollary \ref{cor:2lines} to the lines $\{y\pm x=z=0\}$
to deduce that the neither line lifts to $\mS$.
Hence $\rho(\mS)=8$ by Corollary \ref{cor:lift}.

\subsection{Picard number 7}
\label{ss:7}

We continue by deforming $S$ preserving 6 lines and the involution $\imath$:
\[
\mS:\;\;\; y(y^2+x^2)(y^2-x^2+p(w+z)^2) = zw(x^3+(x-z-w)(z^2+zw+w^2)).
\]
Exactly as in \ref{ss:8}, we obtain
\[
7\leq\rho(\mS)<\rho(\mS_p)=\rho(S_p)=9,
\]
since neither of the lines
\[
\ell_\pm = \{y\pm x=z=0\},\;\;\; \ell'_\pm = \{y\pm x=w=0\}
\]
lifts to $\mS$ by Corollary \ref{cor:2lines}.
In order to infer that $\rho(\mS)=7$,
we employ Technique \ref{tech-3} with $G=\{\id,\imath\}$. 
If we had $\rho(\mS)>7$,
then this would mean that either the 
$\imath^*$-invariant divisor $\ell_++\ell'_+$
or the anti-invariant divisor $\ell_+-\ell'_+$ lifts to $\mS$.
We start by ruling out the latter assumption.
Equivalently, the quadric $\mathfrak Q=\ell_++\ell'_-$ lifts.
By Proposition \ref{prop:lift}, the lift is a unique quadric $Q$
with $Q^2=-6$.
But then, by inspection of the self-intersection number,
$Q$ decomposes into 2 skew lines which necessarily lift $\ell_+$ and $\ell'_-$,
contradiction.

It remains to study the  divisor $\ell_++\ell'_+$
which we assume to lift to $\mS$ (uniquely by Proposition \ref{prop:lift}, hence over $\Q$).
Equivalently,
the tritangent lines
\[
\mathfrak L_\pm=\{y\pm1=v=0\}
\]
deform from the K3 quotient $X$ to
\[
\mX:\;\;\;  y(y^2+1)(y^2-1+pu^2) = v(1-u^3+vu+u^2-v).
 \]
Here we can argue similarly to  Section \ref{s:lift},
but the reasoning is greatly simplified by the fact
that for any divisor $D$ on a K3 surface with $D^2=-2$,
either $D$ or $-D$ is effective by Riemann-Roch.
Applied to the lifting problem, this means that an effective divisor $\D$ with $\D^2=-2$
lifts to an effective divisor if it lifts at all,
and the lift is unique by the negative self-intersection number and the conservation of the degree.

Presently,
the quadric $\mathfrak L_++\mathfrak L_-$ on $\mS_p$
lifts uniquely to the quadric 
\[
Q=\{y^2-1+pu^2=v=0\}\subset\mS.
\]
But then if either line $\mathfrak L_\pm$ were to lift to $\mS$,
then it would automatically lift to a component of $Q$.
However, $Q$ is irreducible by construction.
This gives the required contradiction and concludes the proof that $\rho(\mS)=7$ 
(and $\rho(\mX)=5$).

\subsection{Towards Picard number 6}
\label{ss:towards6}

The case of Picard number 6 is yet different from the previous ones
because it is non-trivial to endow the quintics from the previous subsections 
with more symmetry without forcing the Picard number to actually go up
(similar to what goes on for K3 surfaces with symplectic automorphisms
as studied by Nikulin \cite{Nikulin}).
We will overcome this by first deforming a singularity
(the only instance within this paper)
and at the final step using point counts at a second prime even more expansive than in the previous  subsections.

The general idea persists though:
we work with a quintic $S$ with an involution $\imath$
such that the quotient $S/\imath$ has a K3 surface $X$ as resolution.
Then we try to control the transcendental eigenspaces
$T_+ = T(X)$ and $T_-$ inside $H^2(S)$ separately.
Again we will endow the surfaces with a wild automorphism over $\F_5$
so that we can prove $\rho(S\otimes\bar\F_5)=9$.
This severely limits the possible structure on $S$.
Our goal is to deform away two $\imath^*$-invariant divisors
and one $\imath^*$-anti-invariant divisor on $S$.
This was impossible (in a provable way) for the quintics from \ref{ss:wild}
since there one typically first deforms an anti-invariant divisor before going for an invariant divisor.
Instead we start with a quintic with 4 lines and an $A_4$-singularity.

\subsection{Another quintic with Picard number 9}
\label{ss:another}

Consider the quintic
$S$ over $\F_5$
taking a slightly different shape compared to \eqref{eq:zwg},
\[
S:\;\;\; y^5-x^4y = 
(z+w)\, g(x,z,w)
\]
 given by the  polynomial
\[
g(x,z,w) = (z-w)^2(z^2+zw+w^2+x(z+w))-x^2(z^2-zw+w^2)+x^3(z+w)-2x^4.
\]
It is equipped with 
\begin{itemize}
\item
an involution $\imath$ interchanging $z, w$,
\item
5 $\imath$-invariant lines at $z+w=0$
and 
\item
4 exceptional curves above the $A_4$-singularity at $[0,0,1,-1]$.
\end{itemize}

\begin{Proposition}
\label{prop:6-5}
The quintic $S$ has Picard number $9$ over $\bar\F_5$.
\end{Proposition}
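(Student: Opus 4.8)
The plan is to follow the proof of Proposition~\ref{prop:9} essentially verbatim. I would first record the lower bound: the five lines at $\{z+w=0\}$, the four exceptional curves over the $A_4$-point and a generic hyperplane class span a rank-$9$ sublattice $V\subset\Pic(S)$, so that $\rho(S\otimes\bar\F_5)\ge 9$, and only the matching upper bound is at issue. Here one exploits that $S$ carries the wild order-$5$ automorphism $\varphi\colon[x,y,z,w]\mapsto[x,y+x,z,w]$ — well defined over $\F_5$ since $(y+x)^5-x^4(y+x)=y^5-x^4y$ in characteristic $5$, and fixing $[0,0,1,-1]$, hence descending to the resolution — as well as the involution $\imath$. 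Both commute with $\Fr_5^*$, so all three act compatibly on
\[
U=(V\otimes\Q_\ell(-1))^\perp\subset\het{S\otimes\bar\F_5},
\]
a $44$-dimensional space, and I would diagonalise simultaneously: $\imath^*$ splits $U$ as $U_+\oplus U_-$, where $U_+$ is the transcendental part of $H^2$ of the K3 surface $X$ resolving $S/\imath$, while $\varphi^*$ further decomposes each summand into its four nontrivial isotypic pieces, on which the characteristic polynomial of $\Fr_{5^r}^*$ is obtained — following \cite{S-wild} — from the fixed-point counts $\#\,\mathrm{Fix}\big((\varphi^j)^*\circ\Fr_{5^r}^*\big)$, $j=0,\dots,4$, each of which is directly computable over $\F_{5^r}$.

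Next I would dispose of the $\imath^*$-invariant part $U_+$ (of dimension $16$) exactly as in Lemma~\ref{lem:9-6}: point counts over $\F_5,\dots,\F_{5^4}$ determine the characteristic polynomial of $\Fr_5^*$ on $U_+$ outright, and one checks that it is irreducible over $\Z$ and not of cyclotomic shape, so $U_+$ contains no algebraic class — equivalently, $\rho(X\otimes\bar\F_5)=6$.

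The substantive work is the $28$-dimensional $\imath^*$-anti-invariant part $U_-$, to be handled exactly as in \ref{ss:9-5}. The same point counts over $\F_5,\dots,\F_{5^4}$, after subtracting the contribution of $U_+$, pin down only the leading coefficients of the characteristic polynomial $\chi_5^-(\lambda)$ of $\Fr_5^*$ on $U_-$, leaving about a dozen coefficients undetermined over $\Z$. Poincar\'e duality for $U_-$ turns its functional equation into a slightly overdetermined system of equations of degree up to $4$ in these unknowns, and I would solve it by the $2$-adic iteration of \ref{ss:9-5}: enumerate all solutions modulo $2$, then successively lift — at each step solving a linear system over $\F_2$ and keeping the branch(es) that still lift — until the bounds imposed by the Weil conjectures single out the unique integral solution. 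Verifying that the resulting $\chi_5^-(\lambda)$ is irreducible over $\Q$ and not of cyclotomic shape then shows that $U_-$, and hence all of $U$, carries no algebraic class; combined with the lower bound, this gives $\rho(S\otimes\bar\F_5)=9$.

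The main obstacle is this last step — not the point counting itself, but solving the non-linear system for the missing coefficients of $\chi_5^-$, which is only tractable because an integral solution is known a priori to exist, and that is precisely what drives the $2$-adic lifting to converge. A minor preliminary is to confirm that $S$ is smooth away from a single $A_4$-point at $[0,0,1,-1]$ (so that reduction and the class count are as stated) and that the nine named classes are linearly independent; both reduce to routine resultant and intersection-matrix computations.
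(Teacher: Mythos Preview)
Your proposal is correct and follows essentially the same approach as the paper: the paper's proof also establishes $\rho(S)\geq 9$ from the five lines and four exceptional curves, then splits $U$ via $\imath^*$ into $U_+$ (handled via the K3 quotient $X$, yielding an irreducible non-cyclotomic degree-$16$ characteristic polynomial) and $U_-$ (handled by the $2$-adic iteration to recover the degree-$28$ characteristic polynomial, again irreducible and non-cyclotomic). The only cosmetic difference is that the paper lists the explicit polynomials $\chi_+$ and $\chi_-$ it obtains, whereas you describe the computation abstractly.
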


\begin{proof}
From the above curves, it is evident that $\rho(S)\geq 9$.
It remains to show that $\rho(S\otimes\bar\F_5)=9$.
We proceed as in \ref{ss:wild} - \ref{ss:9-5}.
For the K3 surface $X=\widetilde{S/\imath}$,
we compute the characteristic polynomial $\chi_+(\lambda)$ of Frobenius
on the 16-dimensional complement $U_+$ of the known (algebraic) subspace of $H^2_{\text{\'et}}(X\otimes\bar\F_5,\Q_\ell(1))$
(computed as in \ref{ss:K3-6}):
\begin{eqnarray*}
\chi_+(\lambda) & = &
T^{16}-T^{15}+2T^{14}-T^{13}+T^{12}-T^{10}+2T^9-\frac{14}5T^8\\
&& \;\;\; +2T^7-T^6+T^4-T^3+2T^2-T+1.
%-T^3-T+2*T^7+2*T^9-T^15-T^6+2*T^2+T^16-(14/5)*T^8+T^12-T^10+2*T^14+1-T^13+T^4
\end{eqnarray*}
Since $\chi_+$ is irreducible over $\Q$ and not integral, so neither cyclotomic, we 
infer $\rho(X)=6$ (over $\bar\F_5$ and thus also over $\C$)
and rank$(T_+)=16$.

We continue by proving rank$(T_-)=28$ as in \ref{ss:9-5}.
It suffices to show that the relevant 28-dimensional eigenspace 
$U_-\subset H^2_{\text{\'et}}(S\otimes\bar\F_5,\Q_\ell(1))$
does not contain any algebraic classes.
To this end, we compute the characteristic polynomial $\chi_-(\lambda)$ as
\begin{eqnarray*}
5^3 \chi_-(\lambda) & = & 125\,\lambda^{28}-375\,\lambda^{27}+825\,\lambda^{26}-1250\,\lambda^{25}+1405\,\lambda^{24}-1095\,\lambda^{23}
\\
&&
+130\,\lambda^{22}+1130\,\lambda^{21} -2339\,\lambda^{20}+2862\,\lambda^{19}-2384\,\lambda^{18}
\\
&&
 +1025\,\lambda^{17} +805\,\lambda^{16}-2313\,\lambda^{15}+2914\,\lambda^{14}-2313\,\lambda^{13}
\\
&&
+805\,\lambda^{12}+1025\,\lambda^{11}-2384\,\lambda^{10}+2862\,\lambda^9-2339\,\lambda^8+1130\,\lambda^7
\\
&&
+130\,\lambda^6-1095\,\lambda^5+1405\,\lambda^4-1250\,\lambda^3+825\,\lambda^2-375\,\lambda+125.
\end{eqnarray*}
As before, we find that $\chi_-$ is irreducible over $\Q$ and not cyclotomic.
Hence $U_-$ cannot contain any algebraic classes (neither over $\bar\F_5$ nor over $\C$)
and $T_-=U_-$ has dimension 28 as claimed.
\end{proof}

\begin{Remark}
By inspection of the characteristic polynomial of Frobenius,
the K3 surface $X$ over $\F_5$ has height $8$.
We are not aware of another explicit example of this height in the literature.
\end{Remark}

\subsection{1st Deformation}
\label{ss:def-1}

It was already indicated in the proof of Proposition \ref{prop:6-5}
that
any lift $\mS$ of $S$ from $\F_5$ to $\C$ has Picard number $\rho(\mS)\leq 9$.
Using the lifting obstructions for divisors from Section \ref{s:lift}, one can easily do better.
In practice, we will deform the equation in two directions:
on the one hand, we will deform two pairs of lines to irreducible quadrics.
By Corollary \ref{cor:2lines}, neither of the 4 lines lifts, so $\rho(\mS)\leq 8$ by Corollary \ref{cor:lift}.
However, in order to reach $\rho=6$, we will have to control the lifting of linear combinations of all 4 lines;
for this, we will use reduction modulo a second prime 
and the Galois action on $\NS$ with a view towards Technique \ref{tech-2} in \ref{ss:6-3}.

On the other hand, we will deform the singularity from $A_4$ to $A_3$.
Recall from \ref{ss:sing} that this still allows for good reduction:
the first blow-up of the singularity results in exceptional components $E_1, E_4$;
after the second blow-up, the exceptional locus is given by  a quadric $Q$
which is either irreducible or decomposes into two rational curves $E_2, E_3$.
That is, in the deformation $\mS$, $E_2+E_3$ deforms to $Q$.

%\begin{Lemma}
%\label{lem:E_23}
%$E_2, E_3$ lift if and only if $Q$ is reducible.
%\end{Lemma}
%
%\begin{proof}
%One direction is obvious.
%For the other implication, we argue as in Section \ref{s:lift}.
%Assume that $E_2$ lifts to some divisor $\mathfrak E_2$ on $\mS$.
%To see that $\mathfrak E_2$ is effective we use Riemann-Roch to compute
%\[
%\chi(\mathfrak E_2) = \chi(E_2)=4.
%\]
%Here $h^2(E_2)=h^0(H-E_2)=3$
%since the linear system $|H-E_2|$ is given by the 3-dimensional space of linear forms 
%vanishing in the singular point $[0,0,1,-1]$.
%Hence $h^0(E_2)\geq 1$, 
%and upper-semicontinuity applied to $h^2$ implies $h^0(\mathfrak E_2)\geq 1$ as claimed. 
%We conclude by observing that the effective divisor $\mathfrak E_2$ has negative intersection with $Q$.
%Hence $Q$ is reducible as claimed.
%The same argument applies word by word to $E_3$.
%\end{proof}

The crucial property for us is that while $\imath$ leaves all 5 lines invariant,
it exchanges the exceptional curves above the $A_4$ singularity of $S$.
That is, by Technique \ref{tech-3} we can decide on the lifting of the anti-invariant divisor $E_2-E_3$
independently of the lines.
Since $E_2+E_3=Q$ lifts by assumption,
lifting $E_2-E_3$ is equivalent to lifting $E_2, E_3$ by Theorem \ref{thm:prim}.
Hence we infer from Lemma \ref{lem:sing}:

\begin{Lemma}
\label{lem:rk29}
If the $A_4$ singularity on $S$ deforms to $A_3$ on $\mS$,
then 
\[
\rank\; T_-(\mS)\geq 29.
\]
\end{Lemma}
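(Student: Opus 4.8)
The plan is to apply the lifting obstruction of Lemma~\ref{lem:sing} to the single $\imath^*$-anti-invariant exceptional class $E_2-E_3$, following the sketch given just before the statement. Throughout, $\mS$ carries the involution $\imath$ lifting the one on $S$; the degeneration of the singularity counts as good reduction by \ref{ss:sing}, so the specialisation embedding $\NS(\mS)\hookrightarrow\NS(\mS_5)=\NS(S)$ is primitive (Theorem~\ref{thm:prim}) and $\imath^*$-equivariant. I write $\rho^-(\,\cdot\,)$ for the rank of the $\imath^*=-1$ part of $\NS$, and I first note the elementary fact that the $\imath^*=-1$ part of the primitive sublattice $\NS(\mS)\subset\NS(S)$ is again primitive inside $\NS(S)^{\imath^*=-1}$ (if $v\in\NS(S)^{\imath^*=-1}$ and $nv\in\NS(\mS)$ then $v\in\NS(\mS)$ by primitivity, so $v\in\NS(\mS)^{\imath^*=-1}$).

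First I would isolate the obstruction. Since the singularity on $\mS$ is of type $A_3$, the exceptional divisor of the second and final blow-up is an \emph{irreducible} quadric $Q$, which specialises on $S$ to the reducible quadric $E_2+E_3$; hence $E_2+E_3$ lies in the image of $\NS(\mS)$. On the other hand, Lemma~\ref{lem:sing} applied to the degeneration of $A_3$ to $A_4$ at $5$ says that $E_2$ does \emph{not} lift to $\mS$. Consequently $E_2-E_3$ does not lift either: if it did, then $2E_2=(E_2+E_3)+(E_2-E_3)$ would lie in the image of $\NS(\mS)$, and primitivity would force $E_2$ into that image, a contradiction.

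Now $E_2-E_3$ is $\imath^*$-anti-invariant (because $\imath$ interchanges $E_2$ and $E_3$) and primitive in $\NS(S)$, hence primitive in $\NS(S)^{\imath^*=-1}$. Since $\NS(\mS)^{\imath^*=-1}\hookrightarrow\NS(S)^{\imath^*=-1}$ is a primitive embedding omitting the primitive vector $E_2-E_3$, it cannot have finite index, so $\rho^-(\mS)\le\rho^-(S)-1$. Finally, because $\imath$ deforms together with the surface and $b_2(\mS)=b_2(S)=53$, the anti-invariant Betti number is preserved: $\dim H^2(\mS)^{\imath^*=-1}=\dim H^2(S)^{\imath^*=-1}$. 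Splitting the orthogonal decomposition $H^2(-)\otimes\Q=(\NS(-)\otimes\Q)\oplus(T(-)\otimes\Q)$ under $\imath^*$ on both sides and using $\rank T_-(S)=28$ from Proposition~\ref{prop:6-5}, this equality reads $\rho^-(S)+28=\rho^-(\mS)+\rank T_-(\mS)$, whence $\rank T_-(\mS)=\rho^-(S)-\rho^-(\mS)+28\ge 1+28=29$, which is the claim.

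I expect the only genuinely delicate point to be the invariance $\dim H^2(\mS)^{\imath^*=-1}=\dim H^2(S)^{\imath^*=-1}$: one must check that the $\imath^*$-action on $H^2$ really is compatible with the specialisation even though the special fibre $S$ acquires a worse rational double point, and this is exactly where one leans on the analysis of \ref{ss:sing} that the degeneration from $A_{2n-1}$ to $A_{2n}$ is smooth, so that the comparison of cohomologies — and with it the eigenspace dimensions — goes through.
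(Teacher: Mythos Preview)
Your argument is correct and follows exactly the approach the paper sketches in the paragraph preceding the lemma: use Lemma~\ref{lem:sing} to see that $E_2$ does not lift, combine with the lifting of $Q=E_2+E_3$ and primitivity (Theorem~\ref{thm:prim}) to conclude that the anti-invariant class $E_2-E_3$ does not lift, and then translate this into the rank bound for $T_-$ via the $\imath^*$-eigenspace decomposition. You have simply made explicit the rank bookkeeping (using $\rank T_-(S)=28$ and the constancy of $\dim H^2(-)^{\imath^*=-1}$ under the smooth degeneration from \ref{ss:sing}) that the paper leaves to the reader.
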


%\begin{proof}
%Since $E_2+E_3=Q$ lifts by assumption,
%lifting $E_2-E_3$ is equivalent to lifting $E_2, E_3$ by Theorem \ref{thm:prim}.
%Neither lifts by Lemma \ref{lem:E_23},
%so the rank of the algebraic part inside the $(H^2)^{\imath^*=-1}$ goes down 
%when deforming $S$ to $\mS$.
%In terms of the transcendental lattices, we obtain $\rank\,T_-(\mS)>\rank\,T_-(S)=28$.
%\end{proof}
%
%
%

\subsection{Second Reduction}
\label{ss:6-3}

Our aim now is to deform $S$ such that $\rank\, T_+=18$
(while \ref{ss:def-1} only gives $\rank\, T_+\geq 17$).
To this end, we need to control the deformations of the lines on $S$.
Since two of them are conjugate over $\Q(\sqrt{-1})$,
they lend themselves to Technique \ref{tech-2},
but we have to work with the reduction modulo a different prime $p$ such that $\sqrt{-1}\not\in\F_p$;
in what follows we will take $p=3$. %(we cannot take $p=2$ because we want to apply Theorem \ref{thm:prim}).
Specifically, consider the quintic $S'$ over $\F_3$ given as in \ref{ss:another}
by
\[
S':\;\;\; y^5-x^4y = 
(z+w)\, g'(x,z,w),
\]
but with a modified  polynomial
\[
g'(x,z,w) = (z-w)^2(zw+x(z+w))+x^2(z^2+w^2). %
%((z+w)^2-4*z*w)*z*w+x*((z+w)^2-4*z*w)*(z+w)+x^2*((z+w)^2+z*w)
\]
For our purposes it suffices to work with the K3 quotient $X'=\widetilde{S'/\imath}$,
since we are in essence only concerned with the eigenspace $T_+ = T(X')$.
This is of great relevance not only because it reduces the complexity of the computations,
but also because the quintic $S'$ attains 8 additional nodes over $\F_9$ 
whose exceptional curves are contracted in the quotient $X'$.
Affinely $X'$ can be given as a double sextic
\begin{eqnarray}
\label{eq:X'}
X':\;\;\; v^2 = y^5-x^4y+x^4+x^3+x^2+x+1.
\end{eqnarray}
From the invariant cycles on $S'$ and the isolated fixed point of $\imath$ at $[0,0,1,-1]$, 
we obtain a 6-dimensional algebraic subspace of $H^2_{\text{\'et}}(X'\otimes\bar\F_3,\Q_\ell(1))$.
We denote its orthogonal complement by $U'$ and compute the characteristic polynomial $\chi'(\lambda)$
of Frobenius on $U'$ by point counts over $\F_3,\hdots,\F_{3^8}$
using Lefschetz' fixed point formula and Poincar\'e duality:
\begin{eqnarray}
\label{eq:chi'}
\chi'(\lambda) = (\lambda^8-\lambda^4+1)\left(\lambda^8+\frac 23\lambda^4+1\right).
\end{eqnarray}
Either factor is irreducible over $\Q$. The second factor, being non-integral, corresponds to
a non-algebraic subspace of $H^2_{\text{\'et}}(X'\otimes\bar\F_3,\Q_\ell(1))$.
Meanwhile, the first factor corresponds to an irreducible Galois-submodule 
$M'\subset H^2_{\text{\'et}}(X'\otimes\bar\F_3,\Q_\ell(1))$ of dimension 8 
which is algebraic by the Tate conjecture (cf. \cite{Madapusi}),
so $\rho(X'\otimes\bar\F_3)=14$.
The next section continues 
with a simultaneous deformation of $S$ and $S'$ to exhibit a quintic with Picard number 6.

\begin{Remark}
Even without assuming the validity of the Tate conjecture, we could proceed 
to exhibit a quintic $\mS$ of Picard number 6 as follows:
if the Galois module $M'\subset H^2_{\text{\'et}}(X'\otimes\bar\F_3,\Q_\ell(1))$ 
corresponding to the first factor of \eqref{eq:chi'}
is algebraic, then continue as in \ref{ss:6}.
If $M'$ were not algebraic, i.e. if the Tate conjecture were  not to be valid,
then this would mean $\rho(X'\otimes\bar\F_3)\leq 6$ right away,
and we would not have deform away any divisor classes corresponding to $M'$ anyway.
\end{Remark}

\subsection{Picard number 6}
\label{ss:6}

Consider the quintic $\mS$ over $\Q$ arising  as minimal resolution from the following polynomial:
\[
\mS:\;\;\; y(y^2-x^2+15(z-w)^2)(y^2+x^2+15(z-w)^2) = (z+w) \, \tilde g(x,z,w),
\]
where 
\[
\tilde g =  (z-w)^2(6z^2+zw+6w^2+x(z+w))+x^2(4z^2+6zw+4w^2)+6x^3(z+w)+3x^4+15y^4.
% mod 5: g(x,z,w) = (z-w)^2(z^2+zw+w^2+x(z+w))-x^2(z^2-zw+w^2)+x^3(z+w)-2x^4.
%mod 3: g'(x,z,w) = (z-w)^2(zw+x(z+w))+x^2(z^2+w^2). %
\]
By definition, $\mS$ has three exceptional curves above an $A_3$-singularity at $[0,0,1,1]$
while the plane $\{z+w=0\}$ decomposes into a line $\ell_0$ and two conics $Q_1, Q_2$. % at $z+w=0$. 
Hence $\rho(\mS)\geq 6$, and we claim that in fact equality is attained:

\begin{Theorem}
\label{thm:6}
The complex quintic $\mS$ has Picard number 6.
\end{Theorem}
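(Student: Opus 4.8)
The plan is to assemble the two reductions prepared in \ref{ss:def-1} and \ref{ss:6-3} together with the splitting of cohomology under the involution
\[
\imath\colon\;[x,y,z,w]\longmapsto[x,y,w,z],
\]
which is defined on $\mS$ by construction (indeed the coefficient $15=3\cdot5$ is chosen so that the deformation terms vanish modulo both $3$ and $5$). The lower bound $\rho(\mS)\geq 6$ is immediate: the line $\ell_0$, the conics $Q_1,Q_2$ and the three exceptional curves over the $A_3$ point of $\mS$ span a rank-$6$ sublattice of $\Pic(\mS)$. So it remains to prove $\rho(\mS)\leq 6$, and writing $\rho(\mS)=\rho_+(\mS)+\rho_-(\mS)$ for the ranks of the $(\pm1)$-eigenlattices of $\imath^*$ on $\NS(\mS)$, I will show $\rho_-(\mS)=1$ and $\rho_+(\mS)=5$.

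For the anti-invariant part I would reduce modulo $5$. By inspection of the defining polynomial, $\mS\otimes\F_5$ is the quintic $S$ of \ref{ss:another}, whose $A_4$ singularity degenerates to the $A_3$ singularity of $\mS$; Proposition \ref{prop:6-5} gives $\rho(S\otimes\bar\F_5)=9$ and, crucially, that the $28$-dimensional $\imath^*$-anti-invariant eigenspace $U_-$ of $H^2_\text{\'et}(S\otimes\bar\F_5,\Q_\ell(1))$ carries no algebraic classes. Since $\imath$ reverses the exceptional chain of the $A_4$ point with $E_2-E_3$ anti-invariant, Lemma \ref{lem:rk29} --- extracted in \ref{ss:def-1} from Lemma \ref{lem:sing} via Technique \ref{tech-3} --- forces $\rank T_-(\mS)\geq 29$. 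Feeding both facts through the $\imath^*$-equivariant specialisation \eqref{eq:spec}, together with the dimension count for the $\imath^*$-anti-invariant part of $H^2(\mS)$, pins $\rho_-(\mS)\leq 1$; as the class $E_1-E_4$ of the $A_3$ chain on $\mS$ is anti-invariant, $\rho_-(\mS)=1$.

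For the invariant part I would pass to the K3 quotient $\mX=\widetilde{\mS/\imath}$, so that $\rho_+(\mS)$ equals $\rho(\mX)$ minus the fixed contribution of the exceptional curve produced in resolving $\mS/\imath$. Reducing $\mX$ modulo $3$ yields the K3 surface $X'$ of \ref{ss:6-3}, with $\rho(X'\otimes\bar\F_3)=14$: the eight extra algebraic classes span a single irreducible $\Gal(\bar\F_3/\F_3)$-module $M'$, namely the one attached to the cyclotomic factor $\lambda^8-\lambda^4+1$ of \eqref{eq:chi'}, algebraic by the Tate conjecture (cf.~\cite{Madapusi}). By Technique \ref{tech-2} it then suffices to prove that $M'$ does not lift from $X'$ to $\mX$, and since $M'$ is irreducible over $\F_3$ this reduces to controlling a single divisor class. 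Passing to a convenient elliptic (or double-sextic) model of $\mX$ as in \ref{sss:1st}, the components of $Q_1,Q_2$ and their Galois translates become explicit sections, and one checks that the terms $15(z-w)^2$ and $15y^4$ in $\tilde g$ are chosen precisely so that any hypothetical lift would be defined over a number field in which $3$ ramifies --- contradicting good reduction of $\mX$ at $3$, exactly as in the obstruction arguments of \ref{ss:10}, \ref{ss:34} and \ref{ss:9-5}. Granting this, $M'$ is killed, $\rho(\mX)$ falls to the minimum forced by the visible curves, and hence $\rho_+(\mS)=5$; therefore $\rho(\mS)=6$.

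I expect this last lifting computation on $\mX$ --- making the section equations explicit over $\Q$ and tracking $3$-adic valuations of their coefficients --- to be the main technical obstacle, just as it was for the quotient K3 surfaces in \ref{ss:34}.
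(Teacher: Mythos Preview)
Your treatment of the anti-invariant part via reduction mod $5$ is essentially correct and matches the paper: the dimension count together with Lemma~\ref{lem:rk29} forces $\rho_-(\mS)=1$.

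The argument for $\rho_+(\mS)=5$ (equivalently $\rho(\mX)=4$), however, has two genuine gaps.

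First, you miscount the Galois submodules of $\Pic(X'\otimes\bar\F_3)$ that must be obstructed. The sublattice $L$ that visibly lifts to $\mX$ is generated by $\ell_0,Q_1,Q_2,E$ and has rank $4$, not $6$: on reduction the conics degenerate as $Q_1=\ell_1+\ell_2+E$ and $Q_2=\ell_3+\ell_4+E$, but the individual lines $\ell_1,\dots,\ell_4$ do \emph{not} lift. Hence $\Pic(X'\otimes\bar\F_3)/L$ has rank $10$, decomposing into three irreducible Galois modules: the rank-$8$ module $M'$ and two rank-$1$ modules generated by $\ell_1-\ell_2$ and $\ell_3-\ell_4$. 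You address only $M'$. The paper kills the two rank-$1$ modules by the irreducibility over $\C$ of the conics $y^2\pm x^2+60z^2$, via a Riemann--Roch argument on the K3 surface (any lift would be an effective component of the irreducible $Q_i$).

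Second, and more seriously, your proposed method for $M'$ cannot be carried out. The module $M'$ is detected only through the cyclotomic factor $\lambda^8-\lambda^4+1$ of \eqref{eq:chi'}; its algebraicity follows from the Tate conjecture, but no explicit divisors generating it are available. The curves you call ``components of $Q_1,Q_2$ and their Galois translates'' are precisely the lines $\ell_1,\dots,\ell_4$, which sit in the visible rank-$6$ part of $\Pic(X'\otimes\bar\F_3)$ and are unrelated to $M'$. An explicit section computation and $3$-adic ramification argument for $M'$ is therefore not possible. The paper's solution is the essential interplay between the two primes: the bound $\rho(\mS)\leq 7$ already extracted from reduction mod $5$ kills $M'$ for pure rank reasons, since if $M'$ lifted one would get $\rho(\mX)\geq 12$ and hence $\rho(\mS)\geq 13$. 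This indirect use of the mod-$5$ information inside the mod-$3$ argument is the crux of the proof and is missing from your outline.
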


\begin{proof}
We will use reduction modulo $3$ and $5$.
For starters, note that 
\[
\mS\otimes\F_5 = S\otimes\F_5.
\]
Hence $\rho(\mS)\leq 9$ by Proposition \ref{prop:6-5}.
More precisely, using the structure of the arithmetic deformation $\mS$, we have 
\begin{eqnarray*}
\label{eq:rk29}
\rank\, T_-(\mS)\geq 29
\end{eqnarray*}
by Lemma \ref{lem:rk29} (since the $A_4$ singularity is deformed to type $A_3$),
and $\rank\, T_+(\mS)\geq 17$ as argued in \ref{ss:def-1} (since some lines do not lift from $S$ to $\mS$).
That is, 
\[
6\leq \rho(\mS)\leq 7.
\]
In order to see that the first equality is attained, we use reduction modulo $3$ to prove that $\rank\,T_+(\mS)=18$.
Here
$
\mS\otimes\F_3 = S'\otimes\F_3,
$
but for computing the rank of $T_+(\mS)$ it suffices to work with the quotient K3 surfaces
\[
\mX\otimes\F_3 = X'\otimes\F_3
\]
where $\mX$ is a minimal desingularisation of ${\mS/\imath}$ and $T_+(\mS) = T(\mX)$.
To apply the arithmetic deformation technique,
we first verify that $\mX$ has indeed good reduction at $3$.
To this end, it is convenient to work with the model in weighted projective space $\PP[1,1,1,2]$
arising directly from $\mS$ in the invariant coordinates $u=z+w, v=zw$
(e.g.~homogenise \eqref{eq:X'} accordingly).
Here the deformation of the rational curves mapping down $2:1$ from the quintic 
(5 lines $\ell_0,\hdots,\ell_4$ on $S'$ resp.~the line $\ell_0$ and 
two conics $Q_1, Q_2$ on $\mS$)
is still visible in the hyperplane $u=0$.
Another independent class $E$ in $\NS(X')$ resp.~$\NS(\mX)$ 
arises from the isolated fixed point of $\imath$ at $[0,0,1,-1]$;
it is located at the singularity of the ambient weighted projective space.
We sketch the curves in the following figure:

\begin{figure}[ht!]
\setlength{\unitlength}{.25in}
\begin{picture}(15,6)(0,0)

%\thicklines
%\color{black}
\put(11,0){\line(0,1){6}}

\put(10.3,0){$E$}

\multiput(10,1)(0,1){5}{\line(1,0){5}}
\put(15.2,4.8){$\ell_0$}
\put(15.2,3.8){$\ell_1$}
\put(15.2,2.8){$\ell_2$}
\put(15.2,1.8){$\ell_3$}
\put(15.2,.8){$\ell_4$}
  
%\thinlines

\put(1,0){\line(0,1){6}}

\put(0,5){\line(1,0){5}}

\put(0.3,0){$E$}

\put(-.7,4.8){$\ell_0$}

\put(2.5,.5){$Q_1$}

\put(4.45,.5){$Q_2$}

\put(7,3){$\leadsto$}

\qbezier(5,6)(2.5,3.5)(5,1)
\qbezier(2.5,6)(5,3.5)(2.5,1)
%
%\qbezier(5.2,2.5)(5.2,1)(5.8,4)
%\qbezier(5.2,2.5)(5.2,4)(5.8,1)
%
%
%\qbezier(3.2,2.5)(3.2,3.5)(3.5,3)
%\qbezier(3.5,3)(3.8,2.5)(3.8,4)
%
%\qbezier(3.2,2.5)(3.2,1.5)(3.5,2)
%\qbezier(3.5,2)(3.8,2.5)(3.8,1)
%
%
%
%\put(0,0.75){\framebox(7,3.5){}}
%
%\put(0,0){\line(1,0){7}}
%\put(8,0){\makebox(0,0)[l]{$C$}}
%\put(8,2.5){\makebox(0,0)[l]{$X$}}
%\put(8.1,1.75){\vector(0,-1){1}}
%
%\put(6.4,1.5){\makebox(0,0)[l]{$O$}}
%\put(3.8,0){\circle*{.1}}

\end{picture}
%\caption{Elliptic surface with section and singular fibres}
%\label{Fig:ES}
\end{figure}

In terms of divisors on the K3 surfaces the degeneration of the conics involves the exceptional curve $E$:
\[
 Q_1=\ell_1+\ell_2+E,\;\;\; Q_2=\ell_3+\ell_4+E.
 \]
In summary, this affirms that $\rho(X')\geq 6$ resp. $\rho(\mX)\geq 4$, and that $\mX$ has good reduction at $3$.

We are now in the position to apply the arithmetic deformation technique to $\mX$ considered as lift of $X'_3$.
In detail, we want to utilise the Galois action on $\Pic(X'_3)$ in Technique \ref{tech-2}.
To this end, let the rank 4 sublattice $L\subset\Pic(X'_3)$ be generated by the classes 
$\ell_0, Q_1, Q_2, E$ lifting to $\mX$.
By \ref{ss:6-3}, the quotient $\Pic(X'_3)/L$ decomposes into three irreducible Galois modules over $\F_3$.
We will now show that neither of them lifts to $\mX$.
The rank 8 Galois-module $M'$ cannot lift to $\mX$ for rank reasons,
since otherwise the Picard number would exceed the bound $\rho\leq 7$ imposed by reduction modulo $5$.
The other two modules are generated by $\ell_1-\ell_2$ (Galois-invariant)
resp. $\ell_3-\ell_4$ (anti-invariant under $\Gal(\F_9/\F_3)$).
By Theorem \ref{thm:prim}, lifting either module is equivalent to lifting the two lines themselves 
(since their sums are in $\Pic(\mX)$).
Arguing as before with Riemann-Roch,
a lift $\mathfrak L$ would necessarily be effective. But then
$\mathfrak L.Q_i=-1$ implies that $\mathfrak L$ is a component of $Q_i \;(i=1$ or $2)$.
This gives a contradiction, since the quadrics in question
 ($y^2\pm x^2+60z^2$) are clearly irreducible over $\C$.
Thus, neither irreducible Galois module lifts from $X'_3$ to $\mX$,
and Technique \ref{tech-2} proves that
\[
\rho(\mX) = 4, \;\;\; \rank\, T(\mX)=18.
\]
On $\mS$, we obtain
\[
\rho(\mS)\geq 6,\;\;\; \rank\, T_+(\mS)=18,\;\;\; \rank\, T_-(\mS)\geq 29.
\]
Comparing with the second Betti number $b_2(\mS)=53$, we infer that the inequalities are in fact equalities.
\end{proof}

\section{Picard numbers by related techniques}
\label{s:other}

In this section we complete the proof of Theorem \ref{thm}.
To this end, we engineer quintic surfaces with Picard numbers $\rho=30,40,42,44$.
Unfortunately there are no quintic Delsarte surfaces 
where our arithmetic deformations technique would apply directly;
the problem usually is that we either change the singularities significantly
or lose control over the deformations of divisor classes.
Instead, we pursue the almost opposite approach
by deforming quintics to increase the Picard number.

\subsection{Picard number 30}
\label{ss:30}

In this section,
we will deform a quintic Delsarte surface to increase the Picard number to 30.
Crucially, we will impose enough automorphisms
to endow the surface with sub-Hodge structures governed by algebraic curves.
In detail, we start with the Delsarte surface given by
\[
S:\;\;\; w^5+y^5+y(x^4+z^4)=0.
\]
From the covering Fermat surface of degree $20$, we read off $\rho(S)=29$.
Up to finite index, $\Pic(S)$ is generated by the exceptional curves over the $A_4$ singularities at
$[1,0,r,0] \, (r^4=-1)$ and the 21 lines at $y=w=0$ and at $w^5+y^5=x^4+z^4=0$.
Below we will use the involutions on $S$ leaving $y,w$ invariant and acting on $x,z$ as
\[
\imath_1(x,z) = (-x,-z),\;\; \imath_2(x,z)=(-x,z),\;\; \imath_3(x,z)=(x,-z).
\]
We shall now deform the surface $S$ retaining 
singularities, lines, involutions, and for convenience, the symmetry in $x,z$.
To this end, let $f\in\Q[y,w]$ be homogenous of degree $5$ and $a\in\Q$ (to be specialised later).
Then the quintic surface
\[
\mS: \;\;\; f(y,w) + y(x^4+ax^2z^2+z^4) = 0
\]
satisfies all the above requirements, so in particular $\rho(\mS)\geq 29$
outside degenerate cases where, for instance, $a=\pm 2$ or $f$ has multiple factors. 
We study $\mS$ through the quotients by the involutions.
From the action on $H^{2,0}(\mS)$ we deduce that there is a  splitting
\[
T(\mS) = T(\mS/\imath_1) \oplus T(\mS/\imath_2) \oplus T(\mS/\imath_3)
\]
as Galois representations or transcendental Hodge structures over $\Q$.
Moreover, $T(\mS/\imath_2)\cong T(\mS/\imath_3)$ by symmetry:
\begin{eqnarray}
\label{eq:T-30}
T(\mS) = T(\mS/\imath_1) \oplus T(\mS/\imath_2)^2.
\end{eqnarray}

\subsubsection{Second quotient}
\label{ss:30-2}

We analyse $\mS/\imath_2$ in the affine chart $y=1$ with $g(w)=f(1,w)$ and invariant coordinate $u=x^2$:
\[
\mS/\imath_2:\;\;\; g(w) + u^2 + auz^2 + z^4 = 0.
\]
Over $\PP^1_w$ this is an isotrivial elliptic fibration 
where the generic fibre has an automorphism $\varphi$ of order $4$ given by $(u,z)\mapsto (-u,iz)$.
Over a finite extension of ${\Q(a)}$, it converts to the Weierstrass form
\begin{eqnarray}
\label{eq:30-2}
\mS/\imath_2:\;\;\; \eta^2 = \xi^3 - g(w) \xi
\end{eqnarray}
with 2-torsion section $(0,0)$ and singular fibres of type $III$ at the roots of $g$ and $III^*$ at $\infty$.
Thus $\rho(\mS/\imath_2)\geq 14$
with equality at the specialisation $S/\imath_2$.
The latter statement can be derived from the purely non-symplectic automorphism
combining the order 4 automorphism $\varphi(\xi,\eta)=(-\xi,i\eta)$ of the generic fibre
with the degree 5 automorphism $w\mapsto \zeta_5 w$ acting on the base;
by \ref{ss:upper}
 this implies $\phi(20)=8\mid\mbox{rank} \, T(S/\imath_2)$, so for a complex K3 surface $\rho(\S/\imath_2)=6,14$.

In order to compute the Picard number at a different specialisation,
we will crucially use that the elliptic  fibration \eqref{eq:30-2} is trivialised by a product of curves
involving the fiber 
\[
E_i: \;\;\; \eta^2 = \xi^3-\xi
\]
and the genus 6 curve given affinely by
\[
C:\;\;\; t^4 = g(w).
\]
Here $\mS/\imath_2$ is birational to the quotient $(C\times E_i)/\langle\Psi\rangle$
where $\Psi = \psi\times\varphi$ and $\psi\in\Aut(C)$ sends $t$ to $it$.
By the K\"unneth formula, $T(\mS/\imath_2)$ is the irreducible transcendental sub-Hodge structure of
a rank 8 Hodge  structure $T_2$ of weight $2$ on $C\times E_i$ which splits over $\Q(i)$ 
into the following eigenspaces:
\begin{eqnarray*}
T_2 & = & 
(H^1(C)^{\psi^*=i}\otimes H^1(E_i)^{\varphi^*=-i}) \oplus
(H^1(C)^{\psi^*=-i}\otimes H^1(E_i)^{\varphi^*=i})\\
&
\subset & H^2(C\times E)^{\Psi^*=1}.
\end{eqnarray*}
%since the other two eigenspaces on $H^1(E_i)$ are zero.
For later reference, we note that the above eigenspaces on $H^1(C)$ do indeed live on the Prym curve
of the double covering
\begin{eqnarray*}
C\;\; & \to & C' = \{r^2 = g(w)\}\\
(w,t) &  \mapsto & (w,t^2).
\end{eqnarray*}
In \ref{ss:spec-30} we will choose $g$ and $a$ in such a way that $T_2$ is indeed irreducible of rank 8, so that $\rho(\mS/\imath_2)=14$.

\subsubsection{First quotient}
\label{eq:30-1}

We continue by analysing the quotient $\mS/\imath_1$.
Equivalenly, we can quotient out by (the group generated by)
the order 4 automorphism $\gamma: (x,y,z,w)\mapsto (ix,y,-iz,w)$,
since  by inspection of the action on the holomorphic 2-forms,
\[
T_1 := T(\mS/\imath_1) \cong T(\mS/\gamma).
\]
In the invariant coordinates $u=x^4, v=xz$, the quotient is affinely given by
\[
\mS/\gamma:\;\;\; ug(w) + u^2+ auv^2 + v^4 = 0.
\]
Over a finite extension of $\Q(a)$, we obtain an isotrivial elliptic fibration in Weierstrass form
\begin{eqnarray}
\label{eq:iso-30}
\mS/\gamma:\;\;\;
v^2 = s(s^2-ag(w)s+g(w)^2s).
\end{eqnarray}
Generically, this has 6 singular fibers of type $I_0^*$ at the roots of $g$ and at $\infty$
and full two-torsion over $\Q(\sqrt{a^2-4})$.
In particular, we have $\rho(\mS/\gamma)\geq 26$, 
and at the special Delsarte member $\rho(S/\gamma)=26$ as before.

Again, we can trivialise the above elliptic fibration by a product of curves,
namely by the genus 2 curve $C'$ covered by $C$ and the fiber
\[
E_a:\;\;\; \eta^2 = \xi (\xi^2-a\xi+1).
\]
Comparing dimensions and using the K\"unneth formula, we find generic equality in
\[
T_1:= T(S/\gamma) \subseteq H^1(E_a) \otimes H^1(C').
\]

\subsubsection{Specialisation with Picard number 30}
\label{ss:spec-30}

In order to derive a quintic $\mS$ with $\rho(\mS)=30$,
it suffices in view of \eqref{eq:T-30} to set up our equation in such a way
that $T_1$ degenerates to rank 7
while $T_2$ continues to have rank 8.
Since the second equality holds generically true, 
we start by explaining how to achieve the first degeneration.
Equivalently, we are concerned with the Picard number of $E_a\times C'$
which generically equals 2.
In order to endow the product with an additional algebraic cycle,
we postulate that the Jacobian of $C'$ is isogenous to a product involving $E_a$.
Most elementary, this is achieved by assuming that $C'$ is a double cover of $E_a$.
In the following, we simply spell out this condition: 
after a linear transformation of $\PP^1$, 
the curve $C'$ can be given by the following degree 6 polynomial coming from $E_a$:
\[
(y^2 -r^2) ((y^2-r^2)^2-a(y^2-r^2)+1).
\]
Indeed, after the translation $y\mapsto y+r$, the above equation reads in the coordinate $w=1/y$
\[
g(w):=(1+2rw)(1+4rw+4r^2w^2-aw^2-2arw^3+w^4).
\]
With this choice of $g$,
the Jacobian of $C'$ splits as $E_a\times E_a'$
where $E_a'$ is the elliptic  Prym curve of the double covering $C'\to E_a$.
In particular, we obtain an inclusion of Hodge structures
\[
T_1 \subseteq \mbox{Sym}^2 H^1(E_a) \oplus (H^1(E_a)\otimes H^1(E_a')).
\]
It will follow from the next lemma that this is an equality generically.
We will work with the specialisation at  $a=1, r=1/2$ such that
\[
g = (1+w)(1+2w-w^3+w^4).
\]

\begin{Lemma}
For $g$ as above,
the quintic $\mS$ has $\rho(\mS)=30$.
\end{Lemma}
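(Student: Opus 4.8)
The plan is to compute $\rho(\mS)$ from the transcendental decomposition \eqref{eq:T-30}. Since $b_2(\mS)=53$ (see \ref{ss:2.2}) and $\rho+\rank T=b_2$, writing $T_1=T(\mS/\imath_1)$ and $T_2=T(\mS/\imath_2)$ it suffices to establish that for the chosen $g$ one has $\rank T_1=7$ and $\rank T_2=8$; then $\rank T(\mS)=7+2\cdot 8=23$ and $\rho(\mS)=30$. First I would record that at $a=1$, $g=(1+w)(1+2w-w^3+w^4)$ the construction of \ref{ss:30} is non-degenerate — $g$ has no multiple factor and $a\neq\pm2$ — so that $\rho(\mS)\geq 29$, the quotient surfaces and their elliptic/product descriptions from \ref{ss:30-2} and \ref{eq:30-1} apply, and \eqref{eq:T-30} holds (over $\Q$ as Hodge structures and as Galois representations).

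\emph{The bound $\rank T_1\le 7$, hence $\rho(\mS)\geq 30$.} By the very choice of $g$ in \ref{ss:spec-30} the genus-two curve $C'=\{r^2=g(w)\}$ is a double cover of $E:=E_a$, so $\operatorname{Jac}(C')$ is isogenous to $E\times E'$ with $E'$ the complementary (Prym) elliptic curve, which I would write down explicitly from the degree-six model of $C'$. Plugging $H^1(C')\cong H^1(E)\oplus H^1(E')$ into the description $T_1\subseteq H^1(E)\otimes H^1(C')$ of \ref{eq:30-1} and applying the K\"unneth formula gives an inclusion of Hodge structures $T_1\hookrightarrow H^1(E)^{\otimes 2}\oplus\bigl(H^1(E)\otimes H^1(E')\bigr)$; since $T_1$ is transcendental it cannot surject onto the algebraic summand $\bigwedge^2 H^1(E)=H^2(E)$, whence $T_1$ lands in $W:=\operatorname{Sym}^2 H^1(E)\oplus\bigl(H^1(E)\otimes H^1(E')\bigr)$, a weight-two Hodge structure of rank $7$. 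In particular $\rho(\mS)\geq 30$.

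\emph{The equality $\rank T_1=7$.} This holds exactly when $W$ carries no nonzero Hodge class, equivalently when $E$ has no complex multiplication and $E\not\sim E'$: the first excludes the single candidate Hodge class of $\operatorname{Sym}^2 H^1(E)$, and the second kills those of $H^1(E)\otimes H^1(E')$ because $\operatorname{Hom}_{\mathrm{Hodge}}\bigl(H^1(E),H^1(E')\bigr)=\operatorname{Hom}(E,E')\otimes\Q$. For $a=1$ and the above $g$ I would verify both by direct computation: compare the $j$-invariants of $E$ and $E'$ — in particular $j(E)$ is none of the thirteen rational CM values, indeed it is not even an integer — and exhibit a prime $p$ of good reduction with $\#E(\F_p)\neq\#E'(\F_p)$, which rules out isogeny. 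Granting $\rank T_2=8$, this gives $\rho(\mS)=30$.

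\emph{The equality $\rank T_2=8$, and the main obstacle.} By \ref{ss:30-2}, $T_2$ is the transcendental part of the eigenspace $T_2\subseteq\bigl(H^1(C)\otimes H^1(E_i)\bigr)^{\Psi^*=1}$, which is governed by the $\psi$-Prym abelian fourfold $P$ of $C\to C'$ together with its $\Z[i]$-action; its rank equals $8$ unless $P$ acquires extra endomorphisms (in particular, unless $E_i$ splits off as a factor of $P$). The choice of $g$ and $a$ in \ref{ss:spec-30} is made so that this does not happen, and I would confirm it at the specific values by reduction modulo a suitable prime, as in \ref{ss:K3-6} and \ref{ss:9-5}: compute the characteristic polynomial of Frobenius on the relevant $\varphi^*$-eigenspace of $H^2_{\text{\'et}}$ of (the resolution of) $\mS/\imath_2$, equivalently on the $H^1$ of the curves $C,C',E_i$, and check that it is irreducible over $\Q$ of the expected degree and not of cyclotomic shape, so that no eigenspace is algebraic even after reduction. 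Then $\rank T(\mS)=7+2\cdot8=23$ and $\rho(\mS)=53-23=30$. I expect the genuine difficulty to be precisely these last verifications — that the non-decomposition of $P$ and the conditions ``$E$ has no CM'' and ``$E\not\sim E'$'' really survive the specialisation; these reduce to finite point-count and Frobenius computations, with the mild subtlety that excluding CM of $E$ from reductions alone would require several primes, whereas the $j$-invariant comparison settles it at once.
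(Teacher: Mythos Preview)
Your proposal is correct and follows essentially the same route as the paper: split $T(\mS)$ via \eqref{eq:T-30}, show $\rank T_1=7$ by checking that $E=E_1$ has no CM (via its $j$-invariant) and that $E,E'$ are non-isogenous (via a Frobenius-trace comparison at a small prime), and show $\rank T_2=8$ by computing the characteristic polynomial of Frobenius on the relevant eigenspaces of $H^1(C)$ and $H^1(E_i)$ at a prime $p\equiv 1\bmod 4$ and verifying it is irreducible and non-cyclotomic. One small sharpening: your test $\#E(\F_p)\neq\#E'(\F_p)$ only excludes a $\Q$-isogeny, whereas you need non-isogeny over $\bar\Q$; since $E$ has no CM, any $\bar\Q$-isogeny would force $a_p(E)=\pm a_p(E')$ at every good $p$, so you should check $a_p(E)\neq\pm a_p(E')$ (the paper does this at $p=11$).
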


\begin{proof}
The lemma's statement is equivalent to $T(\mS)$ having rank 23.
We will use \eqref{eq:T-30} to show this.

First it is easy to prove that the transcendental Hodge structure $T_1$ has rank 7 at the given values.
To see this, not that $E_1$ does not have CM (by inspection of the j-invariant), 
so $ \mbox{Sym}^2 H^1(E_1)$ is indeed irreducible of rank 3.
On the other hand, $E_1$ and $E_1'$ are not isogenous,
for instance because the traces of Frobenius do not agree at $p=11$ up to sign.
Hence $H^1(E_a)\otimes H^1(E_a')$ is irreducible of rank 4 which proves the claim.

By \eqref{eq:T-30}, it remains to prove that $T_2$ stays irreducible of rank 8
at the chosen special values.
This will follow at once after we compute the characteristic polynomial of some  Frobenius element on
the 8-dimensional Galois representation
\[
T_2 = (H^1(C)^{\psi^*=i}\otimes H^1(E_i)^{\varphi^*=-i}) \oplus
(H^1(C)^{\psi^*=-i}\otimes H^1(E_i)^{\varphi^*=i}).
\]
As explained in \cite{S-wild} and used in Section \ref{s:6-8}, we choose a prime $p\equiv1\mod 4$ such that Galois action over $\F_p$ 
and automorphisms commute.
Then
the characteristic polynomials on each eigenspace involved can be computed from point counts
over $\F_p, \F_{p^2}$ using Lefschetz fixed point formula, Poincar\'e duality and the Weil conjectures.
At $p=17$, we find the characteristic plynomials
\[
\lambda^4-4i\lambda^3-(16+4i)\lambda^2-(32-60i)\lambda+17(15+8i) \;\; \text{ and }\;\; \lambda-(1-4i)
\]
as well as their conjugates for the eigenspaces in $H^1(C)$ and $H^1(E_i)$.
This gives two possibilities for the characteristic polynomial on $T_2$;
the right choice can be singled out by computing the trace $-32$ of Frob$_{17}^*$ on $T_2$
by point counting directly on the model \eqref{eq:iso-30} of $\mS/\gamma$:
\[
\lambda^8+32\,\lambda^7+816\,\lambda^6+18496\,\lambda^5+383214\,\lambda^4+17^2 18496\,\lambda^3+17^4 816\,\lambda^2+17^6 32\,\lambda+17^8.
\]
Since this characteristic polynomial is irreducible over $\Q$ and visibly not coming from a cyclotomic polynomial,
we infer from \eqref{eq:number} that $T_2$ is irreducible of rank 8 and transcendental,
i.e.~does not contain any algebraic classes over $\bar\F_p$ and thus over $\C$.
Hence the same holds for $T(\mS/\imath_2)$.
By \eqref{eq:T-30}, we obtain 
\[
\rank\, T(\mS) = 7 + 2\cdot 8 = 23
\]
which implies $\rho(\mS) = 30$ as claimed.
\end{proof}

\subsection{Top Picard numbers}
\label{ss:top}

In the remaining sections, we engineer quintics with the top even Picard numbers:
\[
\rho=40,42,44.
\]
To this end, we start with the unique Delsarte quintic
attaining the maximum $\rho=45$  from \cite{S-quintic}:
\[
S:\;\;\; yzw^3+xyz^3+wxy^3+zwx^3=0.
\]
We point out the subgroups of the automorphisms group 
given by cyclic coordinate permutations,
generated by
\[
\varphi: [x,y,z,w] \mapsto [y,z,w,x],
\]
and coordinate multiplication by 15th roots of unity,
generated by
\[
\gamma: [x,y,z,w] = [\zeta\,x, \zeta^3\,y, \zeta^7\,z, w] \;\;\; (\zeta\in\mu_{15}).
\]
Together this gives a subgroup % $C_4 \rtimes C_{15}\subset
of $\Aut(S)$ of size 60. 
The above model has 4 isolated rational double points of type $A_9$.
There are 11 lines:
6 where two homogeneous coordinates vanish simultaneously and the orbit of the line 
\[
\ell=\{x+z=y+w=0\}
\]
under the above automorphisms. % given by scalings $C_{15}$.
Adding two (out of 4) twisted cubics, a basis of $\Pic(S)$ is given by the exceptional curves,
three of the first set of lines and 4 of the second set of lines (compare \cite[\S 4]{S-quintic}).

Now we deform $S$ retaining singularities by monomials involving $xyzw$. 
Since the first 6 lines are preserved, this gives $\rho\geq 39$.
For us, it will be more convenient to preserve also the line $\ell$.
Equivalently, the coordinate permutation
\[
\imath= \varphi^2: [x,y,z,w] \mapsto [z,w,x,y]
\]
is preserved. We obtain a 2-dimensional family of quintics
\[
\mS:\;\;\; yzw^3+xyz^3+wxy^3+zwx^3+xyzw(a(x+z)+b(y+w))=0
\]
with $\rho(\mS)\geq 40$. We will see in \ref{ss:40} that generically equality is attained;
that is, up to finite index $\Pic(\mS)=V$ for the lattice generated by the above 40 independent
divisor classes.
Over $\Q$, the involution $\imath$ splits the transcendental Hodge structure into eigenspaces
\begin{eqnarray}
\label{eq:T+-}
T(\mS) = T_+ + T_-.
\end{eqnarray}
From the induced action on $H^{2,0}(\mS)$
we infer that either eigenspace has $T_\pm^{2,0}$ of dimension $2$;
in particular, by complex conjugation
\begin{eqnarray}
\label{eq:T-40}
\rank\,T_\pm \geq 4.
\end{eqnarray}
Of course, the Hodge structure $T_+$ can be interpreted geometrically on the quotient surface $\mY=\mS/\imath$.
In invariant coordinates, we derive the affine elliptic fibration in Weierstrass form
\begin{eqnarray}
\label{eq:quot-40}
\;\;\;\;\;
\mY:\;\;\; y^2 - (t^3+at^2+bt+1) xy + t^3 (t^3+at^2+bt+1) y = x^3 - t^3 x^2.
\end{eqnarray}
Generically, this elliptic surface over $\PP^1$ has reducible fibres of type $I_{12}$ at $t=0,\infty$
and $I_2$ at the zeroes of $t^3+at^2+bt+1$.
At $(0,0)$ there is a 4-torsion section,
so by the Shioda-Tate formula  
\[
\rho(\mY)\geq 27 \;\;\; \text{ and } \;\;\;
\rank\,T_+ = \rank\,T(\mY) \leq 7.
\]
We will see and use below that this agrees with the deformation of $S$.
Consider the orthogonal complement $U$ of $V$ inside $H^2(\mS,\Q)$:
\[
U=V^\perp\subset H^2(\mS,\Q).
\]
Here $U$ has dimension $13$ and contains the transcendental part of $H^2(\mS,\Q)$
by construction.
We will show in \ref{ss:40} that generically $U$ is fully transcendental.
For later use, we determine the dimensions of the eigenspaces $U_\pm$ under $\imath^*$:

\begin{Lemma}
\label{Lem:U}
On any smooth specialisation of $\mS$, we have
\[
\dim_\Q(U_+)=7 \;\; \text{ and } \;\; 
\dim_\Q(U_-)=6.
\]
\end{Lemma}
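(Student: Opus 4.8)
The plan is to determine $\dim_\Q U_\pm$ from a single trace computation. Since $\imath\in\Aut(\mS)$, the lattice $V\otimes\Q$ and its orthogonal complement $U=V^\perp$ are stable under $\imath^*$; moreover $V\otimes\Q$ is non-degenerate (it contains the hyperplane class $\pi^*H$, since the coordinate-hyperplane decompositions of $\pi^*H$ are supported on the generators of $V$), so by the Hodge index theorem $H^2(\mS,\Q)=(V\otimes\Q)\oplus U$ is an $\imath^*$-stable orthogonal decomposition. As $\dim_\Q U=b_2(\mS)-\rank V=53-40=13$ is fixed, the two eigenspace dimensions are pinned down by $\tr(\imath^*\mid U)=\tr(\imath^*\mid H^2(\mS,\Q))-\tr(\imath^*\mid V\otimes\Q)$ through $\dim_\Q U_\pm=\tfrac12\bigl(13\pm\tr(\imath^*\mid U)\bigr)$. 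So I would compute the two traces on the right separately.

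First, $\tr(\imath^*\mid H^2(\mS,\Q))$ via the topological Lefschetz fixed point formula. Since $\mS$ is regular we have $b_1(\mS)=b_3(\mS)=0$, so the Lefschetz number of $\imath$ equals $2+\tr(\imath^*\mid H^2(\mS,\Q))$ and also $\chi(\mathrm{Fix}(\imath))$. The fixed locus of $\imath\colon[x,y,z,w]\mapsto[z,w,x,y]$ on $\PP^3$ is the union of the two skew lines $\{x=z,\,y=w\}$ and $\{x+z=y+w=0\}$; the second is the line $\ell$, which lies on every $\mS$ and is fixed pointwise, while the first is not contained in $\mS$ and, for a smooth member, meets $\mS$ transversally in $5$ distinct points away from the four $A_9$ points (a coincidence or a tangency there would produce a one-dimensional component of $\mathrm{Fix}(\imath)$ inside that line, forcing the line into $\mS$). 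The four $A_9$ points are interchanged in pairs by $\imath$, so it has no fixed points above them on the resolution. Hence $\chi(\mathrm{Fix}(\imath))=\chi(\PP^1)+5=7$ and $\tr(\imath^*\mid H^2(\mS,\Q))=5$. (As a check this must equal $\tr(\imath^*\mid H^{1,1})$, since $\imath^*$ has trace $0$ on $H^{2,0}\oplus H^{0,2}$, each $(2,0)$-eigenspace being $2$-dimensional.)

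Second, $\tr(\imath^*\mid V\otimes\Q)$ from the explicit generators of $V$: the $36$ exceptional curves over the four $A_9$ points, the $6$ lines in which two coordinates vanish, and the line $\ell$. The map $\imath^*$ permutes this spanning set, fixing no exceptional curve, fixing exactly $\{x=z=0\}$ and $\{y=w=0\}$ among the six lines and transposing the remaining four in two $2$-cycles, and fixing $\ell$; so its trace on the spanning set is $0+2+1=3$. The three-dimensional space of relations among these $43$ classes is the ``sum-zero'' subspace of the four-dimensional span of the decompositions $D_x,D_y,D_z,D_w$ of $\pi^*H$ read off from the four coordinate hyperplanes (cf.~\cite[\S4]{S-quintic}), on which $\imath^*$ acts by $D_x\leftrightarrow D_z$, $D_y\leftrightarrow D_w$; this restricts on the sum-zero subspace to one trivial and two sign characters, of trace $-1$. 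Subtracting, $\tr(\imath^*\mid V\otimes\Q)=3-(-1)=4$.

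Combining, $\tr(\imath^*\mid U)=5-4=1$, whence $\dim_\Q U_+=7$ and $\dim_\Q U_-=6$; every ingredient is topological or combinatorial and hence constant along the family, so the equalities hold on every smooth specialisation. The main obstacle is the last computation, which relies on knowing that the relations among the $36$ exceptional curves and the $6$ coordinate lines are exactly the three coming from the coordinate-hyperplane decompositions of $\pi^*H$ (so that $\rank V=40$ in the predicted shape); I would take this from the description of $\Pic(S)$ in \cite[\S4]{S-quintic}.
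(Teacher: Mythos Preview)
Your argument is correct and takes a genuinely different route from the paper's. The paper works at the special Delsarte member $S$ (where $\rho=45$), explicitly exhibits the five extra algebraic classes in $U$ (twisted cubics and sums/differences of lines in the $\gamma$-orbit of $\ell$), reads off their $\imath^*$-eigenvalues, combines this with the known $4+4$ split of $T(S)$, and then invokes topological invariance in the family. You instead compute $\tr(\imath^*\mid H^2)$ via the Lefschetz fixed point formula and $\tr(\imath^*\mid V\otimes\Q)$ combinatorially from the permutation action on the $43$ generating curves modulo the three hyperplane relations; the eigenspace dimensions then drop out of a single subtraction.

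Your approach has the advantage of never needing to identify the extra divisors on $S$ (twisted cubics, $\gamma$-translates of $\ell$) or to know that $\rho(S)=45$; it only uses the description of $V$ itself. The paper's approach, on the other hand, makes the content of $U$ on the central fibre explicit, which is useful later when analysing which classes survive deformation. Your fixed-point step is sound: since $L=\{x=z,\ y=w\}$ restricts on $\mS$ to $2xy(x^3+ax^2y+bxy^2+y^3)$, which never vanishes identically, $L\not\subset\mS$; and a tangency at a smooth point would force $d\imath$ to fix the tangent direction of $L$, contradicting isolatedness of the fixed point, so for a smooth specialisation the five intersection points are distinct, as you say. The one genuine input you borrow, namely that the only relations among the $36$ exceptional curves and the six coordinate lines are the three differences $D_x-D_y,\,D_y-D_z,\,D_z-D_w$, is exactly what underlies the count $\rho\ge 39$ stated just before the lemma, so citing \cite[\S4]{S-quintic} is appropriate.
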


\begin{proof}
We argue with the smooth specialisation $S$.
Here $U$ contains the 
two four-dimensional eigenspaces  $T(S)_\pm\otimes\Q$
and 5 independent algebraic classes
which we deform away on $\mS$
(at least so we will later show, cf.~Proposition \ref{lem:40}).
In terms of eigenspaces $U_\pm$, we find that
the twisted cubics and  two $\Pic(S)$-dependant sums of lines,
\[
 \ell'+\imath^*\ell', \varphi^*\ell'+(\varphi^*)^3\ell' \;\;\; (\ell'=\gamma^*\ell)
  \]
%which are dependant  in $\Pic(S)$,
are $\imath^*$-invariant.
Together these contribute 3 to the rank of $U_+$,  so $\dim_\Q(U_+)\geq 7$.
Meanwhile two differences of lines,
namely
$\ell'-\imath^*\ell', \varphi^*\ell'-(\varphi^*)^3\ell'$
are anti-invariant for $\imath^*$, so $\dim_\Q(U_-)\geq 6$.
Since  $\dim_\Q(U)=13$, we obtain the claimed dimensions on $S$.
Having verified these topological properties on one smooth member of the family $\mS$, 
they necessarily hold for all smooth members.
\end{proof}

%We return to the smooth specialisation $S$.
%Taking for granted that $T$ has rank $13$ generically (see \ref{ss:40}), 
%we infer that upon specialising to $S$, 
%the rank of the transcendental piece $T_+\subset U_+$ drops by 3 while
%$\rank\,T_-$ drops by 2.

\subsubsection{Subfamily}

We shall now specialise further by requiring that the full group $C_4=\langle\varphi\rangle$ of coordinate permutations is preserved:
\[
a=b.
\]
On $\mY$, this does not impact the singular fibers generically, but
it induces the involution $t\mapsto 1/t$ on the base which extends to $\mY$.
Combined with the hyperelliptic involution on the generic fiber,
we obtain two non-trivial involutions on $\mY$.
The quotients are K3 surfaces $\mX_1, \mX_2$ such that over $\Q$
\begin{eqnarray}
\label{eq:T12}
T(\mY) = T(\mX_1) \oplus T(\mX_2).
\end{eqnarray}

\subsubsection{First K3 quotient}
\label{sss:K3-1}

Writing $s=t+1/t$, the quotient preserving the 4-torsion section can be given by
\begin{eqnarray}
\label{eq:X_1}
\mX_1:\;\;\; y^2 = x(x^2+(s+2)(s^3+2bs^2+(b^2+2b-3)s+2b^2-4b+10)x+16(s+2)^2)
\end{eqnarray}
with 4-torsion sections at $x=-4(s+2)$.
The singular fibers of type $I_{12}$ at $s=\infty$, $I_1^*$ at $s=-2$
and $I_2$ at $s=1-b$
show that 
\begin{eqnarray}
\label{eq:T1}
\rho(\mX_1)\geq 19 \;\;\; \text{ and } \;\;\; \rank\,T(\mX_1)\leq 3.
\end{eqnarray}
By the moduli theory of K3 surface over $\C$,
equality is attained generically  in the above rank estimates,
but there are countably many specialisations with $\rho=20$, 
the so-called singular K3 surfaces.
By a result of Livn\'e \cite{Livne}, singular K3 surfaces over $\Q$ are modular.
In fact, the associated Hecke eigenform has CM by an imaginary quadratic field
of class group exponent two.
We employ the techniques from \cite{ES}
to search for K3 surfaces over $\Q$ with $\rho=20$ inside this 1-dimensional family.
Without difficulty we found the following candidates
listed by the square class of the discriminant of the N\'eron-Severi lattice
(or the CM-field of the corresponding modular form of weight 3):
$$
\begin{array}{c||c|c|c|c|c|c|c}
\disc(\Pic)\mod (\Q^\times)^2 & -3 & -8 & -11 & -15 & -51 & -123 & -267\\
\hline
b & 3 & 5 & 7 & 0, 18 & -9 & -45 & -3^3\cdot11
\end{array}
$$
In the sequel (\ref{ss:42}, \ref{ss:44}), we will consider the cases $b=3,5$ in detail
to show that 
$\rho(\mX_1)=20$ and that
the corresponding quintics have Picard number $\rho=42$ resp.~$44$.
Meanwhile we end this paragraph with some comments on the family $\mX_1$
from the moduli point of view.

Using the generators of the N\'eron-Severi lattice,
one can easily compute with the discriminant form following Nikulin 
that generically $\mX_1$ has transcendental lattice
\[
T(\mX_1) = U + \langle 6\rangle.
\]
It follows, that $\mX_1$ arises from a pair of 3-isogenous elliptic curves
through a Shioda--Inose structure.
This suggests that the family should be related to $X^*(3)$,
the modular curve parametrising pairs of conjugate 3-isogenous elliptic curves \cite{Quer}.
More precisely,
we only have to eliminate a symmetry in the parameter $b$ (which can derived 
from easy algebraic manipulations of \eqref{eq:X_1}) 
by extracting a cube root of unity :
\[
c=(b-3)^3.
\]
Then one checks indeed that the parameter $a$ from \cite{Quer} is related to $c$ by
the projective transformation
\[
c=108a/(a-1).
\]
For space reasons we omit the details which proceed along the same lines
as the arguments in \cite{GS}.

\subsubsection{Second K3 quotient}

The second K3 quotient $\mX_2$ is the quadratic twist of $\mX_1$
at the ramified fibers at $s=\pm 2$ of  the involution on $\PP^1$:
\[
\mX_2:\;\;\; y^2 = x(x^2+(s-2)(s^3+2bs^2+(b^2+2b-3)s+2b^2-4b+10)x+16(s-2)^2)
\]
Generically, there are a two-torsion section at $(0,0)$
and reducible fibers of Kodaira types
$I_{12}$ at $s=\infty$, $I_0^*$ at $s=2$
and $I_2$ at $s=1-b$.
By the Shioda-Tate formula, we infer 
\begin{eqnarray}
\label{eq:T2}
\rho(\mX_2)\geq 18 \;\;\; \text{ and } \;\;\; \rank\,T(\mX_1)\leq 4.
\end{eqnarray}
We will see in Lemma \ref{lem:T_2} that generically equality is attained in \eqref{eq:T2}.

\subsubsection{Action on $T_-\subset U_-$}

We have seen how the automorphism $\varphi$ helps us understand the Hodge structures 
$T_+\subset U_+$;
it also has an impact on the Hodge structures $T_-\subset U_-$.
Namely, since $\imath^*=(\varphi^*)^2$ acts as $-1$ on $U_-$
(the Prym part of $U$ with respect to $\imath^*$ so-to-say),
the induced automorphism $\varphi^*$ endows $U_-$
with the structure of a $\Z[i]$-module.
This implies in particular that both $U_-$ and $T_-$ have even rank.
Thus we infer from Lemma \ref{Lem:U} and  \eqref{eq:T-40} that 
\begin{eqnarray}
\label{eq:T-}
\rank\,T_- = 4 \;\; \text{ or } \;\; 6.
\end{eqnarray}
Again, generically the second equality is attained as we will see in Lemma \ref{lem:T_-}.
Moreover the automorphism $\varphi$ proves very useful 
when computing the characteristic polynomial of Frobenius
on the Galois representation associated to $T_-\subset U_-$.
Here we employ techniques like in \ref{ss:spec-30} (see \cite{S-wild}).

\subsection{Picard number 40}
\label{ss:40}

We shall now go through the remaining Picard numbers one by one.
We start by proving that the generic $\mS$ has $\rho=40$.

\begin{Proposition}
\label{lem:40}
At $b=1$, the complex quintic $\mS$ has $\rho(\mS)=40$.
\end{Proposition}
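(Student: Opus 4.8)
The plan is to pin down $\rho(\mS)$ by combining an easy lower bound with an upper bound extracted from the two layers of quotient surfaces set up in \ref{ss:top}. The lower bound $\rho(\mS)\geq 40$ is immediate, since the $40$ divisor classes spanning the sublattice $V$ of \ref{ss:top} (exceptional curves over the four $A_9$ points together with the preserved lines and the hyperplane class) are manifestly preserved by the deformation. As $b_2(\mS)=53$, it then suffices to prove that $U=V^\perp\subset H^2(\mS,\Q)$, of dimension $13$, contains no algebraic class. By \eqref{eq:T+-} and Lemma \ref{Lem:U} we have $T_\pm\subseteq U_\pm$ with $\dim_\Q U_+=7$ and $\dim_\Q U_-=6$, so this amounts to showing
\[
\rank T_+=7\quad\text{and}\quad\rank T_-=6,
\]
which then forces $U=T(\mS)$ and $\rho(\mS)=\rank V=40$.

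For $T_+$, recall from \eqref{eq:T12} that $T_+=T(\mY)=T(\mX_1)\oplus T(\mX_2)$, so by the Shioda--Tate bounds \eqref{eq:T1}, \eqref{eq:T2} it is enough to show $\rho(\mX_1)=19$ and $\rho(\mX_2)=18$ at $b=1$. For $\mX_2$ I would reduce at a prime of good reduction, compute the characteristic polynomial of Frobenius on the rank $\le 4$ orthogonal complement of the fibration lattice from point counts on the elliptic K3 surface $\mX_2$, and check it is irreducible over $\Q$ of degree $4$ and not of cyclotomic shape; that eigenspace is then non-algebraic over $\bar\F_p$, so $\rank T(\mX_2\otimes\bar\F_p)=4$ and hence $\rank T(\mX_2)=4$ via the specialisation embedding \eqref{eq:spec}. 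The delicate case is $\mX_1$, whose transcendental part has rank at most $3$: this odd parity means a single Frobenius computation can never certify $\rho(\mX_1\otimes\bar\F_p)<20$, so point counting at one prime only gives $\rho(\mX_1)\le 20$ and does not separate $19$ from $20$. This is the main obstacle. I would clear it as van Luijk did (or by the single-prime lifting-obstruction variant of Elsenhans--Jahnel built on Theorem \ref{thm:prim} and Corollary \ref{cor:lift}): compute the square class of $\disc\NS(\mX_1\otimes\bar\F_p)$ at two primes via the Artin--Tate formula and check that they disagree --- as they must, since $b=1$ does not occur in the table of singular members of the family \eqref{eq:X_1} in \ref{sss:K3-1}. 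This forces $\rho(\mX_1\otimes\C)=19$, hence $\rank T(\mX_1)=3$ and $\rank T_+=3+4=7$.

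For $T_-$, since $\imath^*=(\varphi^*)^2$ acts as $-1$ on $U_-$, the order-$4$ automorphism $\varphi^*$ endows $U_-$ and $T_-$ with the structure of $\Z[i]$-modules; thus $\rank T_-$ is even, equal to $4$ or $6$ by \eqref{eq:T-}, and we only need to exclude $4$. Following the product-of-curves (Prym) recipe used in \ref{ss:30-2}, I would realise the Galois representation on $U_-$ geometrically, pick a prime $p\equiv 1\bmod 4$ so that Frobenius commutes with $\varphi^*$, split $U_-$ into its two $3$-dimensional $\varphi^*$-eigenspaces, compute the characteristic polynomial of Frobenius on each from point counts over $\F_p$ and $\F_{p^2}$ (Lefschetz' fixed point formula, Poincar\'e duality and the Weil bounds), and multiply to obtain the degree-$6$ characteristic polynomial on $U_-$. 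Verifying that it is irreducible over $\Q$ and not of cyclotomic shape shows $U_-$ is fully transcendental, so $\rank T_-=6$. Combining, $\rank T(\mS)=7+6=13=\dim_\Q U$, whence $U=T(\mS)$ and $\rho(\mS)=40$.
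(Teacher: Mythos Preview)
Your overall strategy matches the paper's: decompose $T(\mS)$ as $T(\mX_1)\oplus T(\mX_2)\oplus T_-$ and show these have ranks $3,4,6$. Your treatment of $T(\mX_2)$ (one-prime Frobenius computation on the rank-$4$ complement) and of $T_-$ (splitting $U_-$ into $3$-dimensional $\varphi^*$-eigenspaces at a prime $p\equiv 1\bmod 4$ and point counting over $\F_p,\F_{p^2}$) is exactly what the paper does.

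The one substantive difference is how you handle the parity obstacle for $\mX_1$. You propose van Luijk's discriminant comparison via Artin--Tate (or an Elsenhans--Jahnel lifting obstruction). The paper instead uses Livn\'e's modularity theorem: if $\rho(\mX_1)=20$ over $\bar\Q$ then $\mX_1$ is a singular K3 surface, hence modular with CM by a \emph{fixed} imaginary quadratic field, so the quadratic factors of $\chi_p$ on $U_1$ must split over that same field for every good $p$. The paper computes $\chi_5$ and $\chi_{11}$ and observes that the possible splitting fields are disjoint, contradicting CM. Both are valid two-prime arguments and are closely related (via Artin--Tate the CM field governs the square class of $\disc\NS$), but the modularity route is slightly more direct here since it reads the obstruction straight off the quadratic factors of $\chi_p$ without computing any lattice discriminants. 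One minor remark: for $T_-$ there is no need to invoke a product-of-curves realisation as in \ref{ss:30-2}; the paper works directly on the quintic via $\varphi$, which is precisely the eigenspace computation you then describe.
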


We will prove the equivalent statement that
\[
\rank\,T(\mS) = 13.
\]
Using \eqref{eq:T+-} and \eqref{eq:T12}, the statement can be broken down into
rank computations for the lattices $T_1, T_2, T_-$.
These will be achieved in Lemmata \ref{lem:T_2}, \ref{lem:T_-},  \ref{lem:T_1} below.
Throughout we use reduction modulo some primes.
We start with $T_2$ and $T_-$ because their treatment requires only one prime.

\begin{Lemma}
\label{lem:T_2}
At $b=1$, the K3 surface $\mX_2$ has $\rho(\mX_2)=18$ and $\rank\,T(\mX_2)=4$.
\end{Lemma}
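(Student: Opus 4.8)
The plan is to deduce the lemma from a single reduction modulo a well-chosen prime. Since $\mX_2$ is a K3 surface we have $\rho(\mX_2)+\rank T(\mX_2)=b_2(\mX_2)=22$, and \eqref{eq:T2} already gives $\rho(\mX_2)\geq 18$; so it suffices to prove $\rho(\mX_2)\leq 18$, i.e.\ that the rank $18$ ``trivial'' sublattice of $\NS(\mX_2)$ spanned by the zero section, a general fibre and the non-identity components of the fibres of type $I_{12}$, $I_0^*$ and $I_2$ (the $2$-torsion section contributing nothing to the rank) already generates $\NS(\mX_2)\otimes\Q$ after base change to $\C$. Note that $b=1$ is not among the special values $\{3,5,0,18,-9,-45,-3^3\cdot11\}$ of the table in \ref{sss:K3-1}, so generic behaviour is to be expected; but this has to be confirmed.

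First I would pick a prime $p$ of good reduction for $\mX_2$ at $b=1$, chosen so that the three reducible fibres do not degenerate further and the trivial lattice keeps rank $18$ modulo $p$; by the specialisation embedding \eqref{eq:spec} it then suffices to show $\rho(\mX_2\otimes\bar\F_p)=18$. To this end I would study the action of $\Fr_p^*$ on $\het{\mX_2\otimes\bar\F_p}$ through the elliptic fibration over $\PP^1_s$: its characteristic polynomial factors as an explicit degree $18$ factor -- all of whose reciprocal roots are of the form $p\zeta$ with $\zeta$ a root of unity, coming from the zero section, the general fibre and the fibre components -- times a degree $22-18=4$ factor $P(\lambda)$ which, up to the Tate twist, is the $L$-polynomial of the elliptic surface over $\F_p$. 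The two free coefficients of $P$ -- the remaining ones being fixed by the functional equation, i.e.\ Poincar\'e duality -- can be read off from point counts of the fibration over $\F_p$ and $\F_{p^2}$ alone, so this is a very modest computation compared with those of Section \ref{s:6-8}. Once $P$ is known, one checks on the machine that it is irreducible over $\Q$ and not of cyclotomic shape in the sense of \ref{ss:K3-6} (which, since $\deg P=4$, only concerns the four values $n\in\{5,8,10,12\}$ with $\phi(n)=4$). Granting this, the $4$-dimensional Galois subrepresentation cut out by $P$ contains no algebraic class over $\bar\F_p$, so \eqref{eq:number} gives $\rho(\mX_2\otimes\bar\F_p)\leq 18$, hence $\rho(\mX_2)=18$ and $\rank T(\mX_2)=4$. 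No parity obstruction interferes, since $b_2=22$ and $18$ are both even.

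The step I expect to be the main obstacle is isolating the degree $18$ algebraic factor cleanly -- in particular pinning down how $\Fr_p^*$ permutes the twelve components of the $I_{12}$ fibre at $s=\infty$. Working with the $L$-polynomial of the elliptic surface over $\F_p(s)$ rather than with the full degree $22$ characteristic polynomial sidesteps this, since the $L$-polynomial isolates the $4$-dimensional complement automatically; what then remains is only the routine (but essential) verification that the resulting degree $4$ polynomial is irreducible and not cyclotomic.
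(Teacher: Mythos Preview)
Your proposal is correct and follows essentially the same route as the paper: compute the characteristic polynomial of $\Fr_p^*$ on the $4$-dimensional orthogonal complement $U_2=V_2^\perp$ via point counts over $\F_p,\F_{p^2}$, then check it is irreducible and not of cyclotomic shape. The paper carries this out at $p=11$, obtaining $\chi_{11}(\lambda)=\lambda^4+\lambda^3-132\lambda^2+121\lambda+14641$, and also makes explicit one small point you gloss over: the sign of the functional equation is a priori unknown, but is forced to be $+1$ whenever $(\tr\Fr_p^*)^2\neq\tr\Fr_{p^2}^*$ (so that the middle coefficient is nonzero), which is how the two traces suffice to pin down all four coefficients.
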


\begin{proof}
In terms of singular fibers and torsion sections,
we know a sublattice $V_2$ of $H^2(\mX_2)$ of rank $18$
generated by algebraic classes.
Here $T(\mX_2)\subseteq U_2= V_2^\perp$.
We claim that equality is attained.
To see this, 
let $p$ be a prime of good reduction
and compute the characteristic polynomial of Frobenius on the Galois representation $U_2\otimes\Q_\ell (\ell\neq p)$.
This can be regarded in $\het{\mX_2\otimes\bar\Q}$,
or via the comparision theorem, in $\het{\mX_2\otimes\bar\F_p}$.
For shortness, we will abbreviate this as 'over $\bar\Q$' (or on $\mX_2\otimes\bar\Q$)
and as 'over $\bar\F_p$'  (or on $\mX_2\otimes\bar\F_p$) in what follows.

Using the Lefschetz fixed point formula and point counting over $\F_p$ and $\F_{p^2}$,
we obtain the traces of $\Fr_p^*$ and $\Fr_{p^2}^*$
on $U_2\otimes\Q_\ell$.
Then the characteristic polynomial $\chi_p(\lambda)$ can be derived from Poincar\'e duality
once we know the sign of the functional equation.
In particular, if $(\tr\Fr_p^*)^2 \neq \tr\Fr_{p^2}^*$,
then the middle coefficient of $\chi_p(\lambda)$ is non-zero,
and the sign of the functional equation is positive, so the two traces determine $\chi_p(\lambda)$. 
%(this works for all surfaces in the family $\mX_2$).
At $b=1$ and $p=11$ we thus obtain
\[
\chi_p(\lambda) = \lambda^4+\lambda^3-132\lambda^2+121\lambda+14641.
\]
If $U_2\otimes\Q_\ell$ were to include some algebraic class on $\mX_2\otimes\bar\F_p$,
then this would have eigenvalue $p$ times a root of unity under $\Fr_p^*$.
That is, $\chi_p(\lambda)$ would have a factor coming from a cyclotomic polynomial.
Presently, however, $\chi_p(\lambda)$ is irreducible
and not of cyclotomic shape.
Hence, by \eqref{eq:number}
\[
18\leq \rho(\mX_2) \leq \rho(\mX_2\otimes\bar\F_p) =18,
\]
and the lemma follows.
\end{proof}

We now turn to the lattice $T_-$ on the quintic $\mS$ at $b=1$:

\begin{Lemma}
\label{lem:T_-}
At $b=1$, we have $U_-=T_-\otimes\Q$ of $\Q$-dimension 6.
\end{Lemma}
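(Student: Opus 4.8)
The plan is to prove that $U_-$, a priori only known to satisfy $\dim_\Q U_- = 6$ (Lemma \ref{Lem:U}) and to contain $T_-$, actually coincides with $T_-\otimes\Q$; equivalently, that $U_-$ contains no algebraic classes at $b=1$. Since $U_-$ carries a $\Z[i]$-module structure coming from $\varphi^*$ (recall $\imath^*=(\varphi^*)^2$ acts as $-1$ on $U_-$), I would compute the characteristic polynomial $\chi_-(\lambda)$ of $\Fr_p^*$ acting on the Galois representation $U_-\otimes\Q_\ell$ at a suitable prime $p$ of good reduction. By \eqref{eq:number}, if $\chi_-$ is irreducible over $\Q$ and not of cyclotomic shape (i.e.\ does not arise from a cyclotomic polynomial via $\lambda\mapsto\lambda/p$ and clearing denominators), then $U_-\otimes\Q_\ell$ can contain no algebraic class over $\bar\F_p$, hence none over $\C$, so $\rank T_- = \dim_\Q U_- = 6$, which is the assertion.

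First I would pick a prime $p\equiv 1\bmod 4$ of good reduction so that the Galois action over $\F_p$ commutes with $\varphi^*$; as in \ref{ss:spec-30} and \cite{S-wild}, this lets us diagonalise $\varphi^*$ over $\F_p$ and split the $6$-dimensional representation $U_-$ into $\varphi^*$-eigenspaces (pairs of conjugate eigenspaces for the eigenvalues $\pm i$), reducing the task to computing characteristic polynomials on lower-dimensional pieces. Then I would carry out point counts on the quintic $\mS$ (or more efficiently on appropriate quotients, using that $\mS\otimes\F_p$ and the auxiliary surfaces have explicit models) over $\F_p$ and $\F_{p^2}$, and combine the Lefschetz fixed point formula applied to $(\varphi^j)^*\circ\Fr_{p^i}^*$ with Poincaré duality and the Weil bounds to pin down $\chi_-(\lambda)$ exactly; for a degree-$6$ polynomial with the $\Z[i]$-symmetry this should be feasible from counts no deeper than $\F_{p^2}$. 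Finally I would check by inspection that the resulting $\chi_-$ is irreducible over $\Q$ and not cyclotomic, and conclude via \eqref{eq:number} that $6\le\rank T_-\le\rho(\mS\otimes\bar\F_p)$-contribution forces $\rank T_- = 6$.

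The main obstacle I anticipate is twofold. On the computational side, one must make sure $(\tr\Fr_p^*)^2\neq\tr\Fr_{p^2}^*$ on the relevant eigenspace so that the sign of the functional equation is determined (otherwise a second prime or a deeper count is needed), and one must organise the point counts so as to extract the contribution of $U_-$ cleanly after subtracting the known algebraic classes in $V$ and the $T_+$-part already handled on $\mY$. On the conceptual side, the delicate point is justifying the comparison between $\het{\mS\otimes\bar\Q}$ and $\het{\mS\otimes\bar\F_p}$ and the passage from "no algebraic class over $\bar\F_p$" to "no algebraic class over $\C$" via the specialisation embedding \eqref{eq:spec}; this is routine given good reduction, but it is where the argument could break if the chosen $p$ were bad. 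Once a good $p$ (e.g.\ $p=13$ or $p=17$) is found for which $\chi_-$ comes out irreducible and non-cyclotomic, the lemma follows immediately.
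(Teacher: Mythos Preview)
Your proposal is correct and follows essentially the same approach as the paper: exploit the $\Z[i]$-structure from $\varphi^*$ to split $U_-\otimes\Q_\ell$ into two conjugate $3$-dimensional eigenspaces at a prime $p\equiv 1\bmod 4$, compute the first two coefficients of the characteristic polynomial on each eigenspace from fixed-point counts of $(\varphi^j)^*\circ\Fr_{p^i}^*$ over $\F_p,\F_{p^2}$, recover the missing coefficient via Poincar\'e duality on the full $6$-dimensional piece, and then verify irreducibility and non-cyclotomic shape. The paper carries this out at $p=5$ rather than your suggested $p=13$ or $17$, and your worry about the sign of the functional equation is not actually needed here, since on a $3$-dimensional eigenspace only one coefficient over $\Z[i]$ is missing and Poincar\'e duality for the degree-$6$ polynomial over $\Q$ overdetermines it.
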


\begin{proof}
We could proceed as above,
but this would require counting points deeper into finite fields (at least down to $\F_{p^3}$)
than we would like.
Instead, we utilise the automorphism $\varphi$ which commutes with $\Gal(\bar\F_p/\F_p)$
if $p\equiv 1\mod 4$.
Then we can compute the traces of the compositions
$(\varphi^j\circ\Fr_q)^*$ on $U\otimes\Q_\ell$ from points counts over $\F_q$.
This encodes the eigenvalues of $\Fr_q^*$ on all 4 eigenspaces of $\varphi^*$ on $U\otimes\Q_\ell$.
In particular, we obtain the traces on the $(\pm i)$-eigenspaces into which $U_-\otimes\Q_\ell$
decomposes.
For instance, point counts over $\F_p, \F_p^2$ give the first two  coefficients of the characteristic polynomial
of $\Fr_p^*$ on $(U\otimes\Q_\ell)^{\varphi^*=\pm i}$.

We now specialise to the smooth point $b=1$ and $p=5$.
Since
\[
(U\otimes\Q_\ell)^{\varphi^*=i} \oplus (U\otimes\Q_\ell)^{\varphi^*=-i} = U_-\otimes\Q_\ell,
\]
either eigenspace is 3-dimensional by Lemma \ref{Lem:U}.
Thus there is only one coefficient over $\Z[i]$ missing
from the characteristic polynomial.
We can solve for this coefficient by spelling out Poincar\'e duality
for the characteristic polynomial of $\Fr_p^*$ on $U_-\otimes\Q_\ell$
(a system of 3 equations in 2 variables with a solution over $\Z$
compatible with the Weil conjectures).
At $b=1$ and $p=5$,
this leads, uniquely up to conjugation, to the characteristic polynomial
\[
\det(\lambda-{\Fr}_p^*; (U\otimes\Q_\ell)^{\varphi^*=i}) = 
\lambda^3-4(1-i)\lambda^2+20(1-i)\lambda+125i.
\]
In particular, we obtain that the characteristic polynomial of $\Fr_p^*$ on $U_-\otimes\Q_\ell$
is irreducible over $\Q$ and not of cyclotomic shape.
As in the proof of Lemma \ref{lem:T_2},
we deduce that $U_-\otimes\Q_\ell$ contains no algebraic classes 
regarded in $\het{\mX_2\otimes\bar\F_p}$.
Hence $U_-$ cannot contain any algebraic classes in $H^2(\mX_2,\Q)$ either;
equivalently, $U_-=T_-\otimes\Q$ of dimension 6.
\end{proof}

It remains to compute the rank of $T_1$ at $b=1$.

\begin{Lemma}
\label{lem:T_1}
At $b=1$, the K3 surface $\mX_1$ has $\rho(\mX_1)=19$ and $\rank\,T(\mX_1)=3$.
\end{Lemma}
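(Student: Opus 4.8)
The plan is to follow the same strategy as in Lemma \ref{lem:T_2} and Lemma \ref{lem:T_-}, now applied to the K3 surface $\mX_1$ at $b=1$. By \eqref{eq:T1} we already know $\rho(\mX_1)\geq 19$ and $\rank\,T(\mX_1)\leq 3$ from the reducible fibers of type $I_{12}$ at $s=\infty$, $I_1^*$ at $s=-2$, $I_2$ at $s=1-b=0$, together with the $4$-torsion section; let $V_1$ denote the rank $19$ sublattice of $H^2(\mX_1)$ generated by these algebraic classes, so that $T(\mX_1)\subseteq U_1 := V_1^\perp$ with $\dim_\Q U_1 = 3$. It therefore suffices to prove that $U_1$ contains no algebraic classes. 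Since $\dim U_1 = 3$ is odd, there is in fact an a priori obstruction-free situation: the characteristic polynomial of Frobenius on $U_1\otimes\Q_\ell$ at a good prime $p$ has odd degree, so by the Weil conjectures it must have a real root, which forces one eigenvalue to be $\pm p$. Consequently at least one class in $U_1$ is "of cyclotomic shape'' automatically; this is exactly the mechanism by which $\mX_1$ picks up its $19$th generator and why we cannot hope for $U_1$ to be fully transcendental. The correct statement to prove is that \emph{the remaining $2$-dimensional piece} contains no algebraic classes.

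Concretely, I would fix a prime $p$ of good reduction with $p\equiv 1\bmod 4$ so that the automorphism $\varphi$ (descended to $\mX_1$ via the $s = t+1/t$ construction, or directly the relevant automorphism of $\mX_1$) commutes with $\Fr_p$, and compute the characteristic polynomial $\chi_p(\lambda)$ of $\Fr_p^*$ on $U_1\otimes\Q_\ell$ by point counts over $\F_p$ and $\F_{p^2}$: the two traces, together with the known constant term $\pm p^3$ (from $\det$) and the sign of the functional equation supplied by Poincar\'e duality, determine a degree-$3$ polynomial. At $b=1$ one expects $\chi_p(\lambda)$ to factor as a linear factor $(\lambda \mp p)$ times an irreducible quadratic which is \emph{not} of cyclotomic shape (i.e.\ does not arise from a cyclotomic polynomial via $\lambda\mapsto\lambda/p$ and clearing denominators). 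If so, then the quadratic factor corresponds to a Galois-invariant $2$-plane in $U_1\otimes\Q_\ell$ on which no eigenvalue is a root of unity times $p$, hence which meets $\NS(\mX_1\otimes\bar\F_p)\otimes\Q_\ell$ trivially; by \eqref{eq:number} this gives $\rho(\mX_1\otimes\bar\F_p)\leq 19$, and combined with $\rho(\mX_1)\geq 19$ and the specialisation bound \eqref{eq:rho-p} we conclude $\rho(\mX_1) = 19$ and $\rank\,T(\mX_1) = 3$.

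The main obstacle is a bookkeeping subtlety rather than a conceptual one: one must carefully separate off the "obvious'' extra algebraic class (the one forced by the odd dimension) before concluding anything about the rest, and verify that the residual quadratic factor of $\chi_p$ is genuinely non-cyclotomic at the chosen prime — a small prime like $p=11$ or $p=13$ should already work, as in the companion lemmas, but one has to actually run the point count and inspect the factorisation. A secondary point is confirming that $b=1$ is indeed a smooth point of the family (no extra degeneration of fibers beyond the generic picture listed above), so that the lattice $V_1$ has the claimed rank $19$ and not more; this is immediate from inspecting the discriminant of $t^3+at^2+bt+1$ and the locations $s=-2,\infty$ at $b=1$. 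Once $\rank\,T_1 = 3$ is established, combining with Lemma \ref{lem:T_2} ($\rank\,T(\mX_2)=4$), Lemma \ref{lem:T_-} ($\rank\,T_- = 6$), the decomposition $T(\mY) = T(\mX_1)\oplus T(\mX_2)$ from \eqref{eq:T12} and $T(\mS) = T_+ + T_-$ from \eqref{eq:T+-} yields $\rank\,T(\mS) = 3+4+6 = 13$, i.e.\ $\rho(\mS) = 40$, completing Proposition \ref{lem:40}.
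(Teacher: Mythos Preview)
Your approach has a genuine gap at the crucial step. You correctly note that the degree-$3$ characteristic polynomial $\chi_p(\lambda)$ on $U_1$ is forced by Poincar\'e duality to have a root $\pm p$, so it factors as $(\lambda\mp p)$ times a quadratic. But then you conclude ``$\rho(\mX_1\otimes\bar\F_p)\leq 19$'', and this is simply false: the linear factor $(\lambda\mp p)$ \emph{is} of cyclotomic shape, so the count in \eqref{eq:number} gives $\rho(\mX_1\otimes\bar\F_p)\leq 20$, not $19$. In fact, by the Tate conjecture for K3 surfaces the reduction has $\rho(\mX_1\otimes\bar\F_p)=20$ exactly. This is precisely the parity obstruction discussed in \S\ref{ss:spec}: for a K3 surface of Picard number $19$ over $\bar\Q$, \emph{no single prime} can ever witness $\rho\leq 19$ via \eqref{eq:number}. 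Your sentence ``this is exactly the mechanism by which $\mX_1$ picks up its $19$th generator and why we cannot hope for $U_1$ to be fully transcendental'' reflects the same confusion: the $19$th generator already lies in $V_1$, and what we want to prove is exactly that $U_1$ \emph{is} fully transcendental over $\bar\Q$ (i.e.\ $U_1=T(\mX_1)$), even though it never is over $\bar\F_p$.

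The paper circumvents the parity obstruction by a different argument, specific to K3 surfaces and requiring \emph{two} primes. Assume for contradiction that $\rho(\mX_1)=20$ over $\bar\Q$; then $\mX_1$ is a singular K3 surface, hence modular by Livn\'e, and $T(\mX_1)\otimes\Q_\ell$ is the $2$-dimensional piece with CM by a fixed imaginary quadratic field $K$. In particular, the irreducible quadratic factor of $\chi_p(\lambda)$ must split over the \emph{same} field $K$ for every good prime $p$. The paper computes $\chi_p$ at $p=5$ and $p=11$ (listing both possible signs of the functional equation at each prime, since only one trace is used) and observes that the splitting fields of the quadratic factors are pairwise distinct across the four possibilities; hence no common $K$ exists, giving the contradiction. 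Note also that the condition $p\equiv 1\bmod 4$ and the automorphism $\varphi$ play no role here: $\varphi$ does not descend usefully to $\mX_1$, and the paper uses $p=5,11$ without any such constraint.
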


\begin{proof}
Fiber components and sections provide us with a sublattice $V_1\subset\Pic(\mX_1)$ of rank 19.
As before, we denote its orthogonal complement by $U_1\subset H^2(\mX_1,\Z)$.
We continue by computing the characteristic polynomial $\chi_p(\lambda)$ of Frobenius on the Galois representation
$U_1\otimes\Q_\ell$. 
Depending on the sign of the functional equation, $\chi_p(\lambda)$
is determined by the trace of $\Fr_p^*$ on $U_1\otimes\Q_\ell$
which can be obtained from point counting. 
In detail we find at $b=1$
\begin{eqnarray*}
\chi_5(\lambda) & = & (\lambda-5) (\lambda^2+3\lambda+25) \;\;
\text{ or } \;\; (\lambda+5) (\lambda^2-7 \lambda+25)\\
\chi_{11}(\lambda) & = & (\lambda-11) (\lambda^2+13 \lambda+121) \;\;
\text{ or } \;\; (\lambda+11) (\lambda^2-9 \lambda+121)
\end{eqnarray*}

Clearly $T(\mX_1)\subseteq U_1$ (where the embedding is primitive by definition).
If there was no equality, then $U_1$ were to contain an algebraic class over $\bar\Q$
(not only over $\bar\F_p$ as predicted by the Tate conjecture for parity reasons).
That is, $\rho(\mX_1)=20$ and $\mX_1$ would be modular by \cite{Livne}.
In particular, the characteristic polynomial of $\Fr_p^*$ on $T(\mX_1)\otimes\Q_\ell$
would always split over the same imaginary quadratic extension of $\Q$
(in agreement with the Artin-Tate conjecture at the split primes, cf.~\cite{S-NS}).
Presently, the splitting field would have discriminant
\[
d=-51\; \text{ or } \; -91 \;\; \text{ at } \;\; p=5 \;\;\; \text{ and } \;\;\; d=-35 \; \text{ or } \; -403 \;\; \text{ at } \;\; p=11.
\]
Since neither discriminants agree, we infer that $U_1$ does not contain an algebraic class over $\bar\Q$.
Hence $T(\mX_1)=U_1$ of rank 3.
\end{proof}

\begin{proof}[Proof of Proposition \ref{lem:40}]
We have verified that the three transcendental lattices comprising $T(\mS)$ up to finite index
have a total rank of 13 at $b=1$.
Thus $\rho(\mS)=40$ as required.
\end{proof}

\begin{Remark}
It is clear from the proofs that the same arguments go through 
for any rational number $b\equiv 1\mod 55$.
Without difficulty one can determine other congruence classes with $\rho(\mS)=40$ in abundance.
\end{Remark}

\subsection{Picard number 42}
\label{ss:42}

In this subsection,
we specialise to the quintic $\mS$ at $b=3$.

\begin{Proposition}
\label{prop:42}
At $b=3$, the complex quintic $\mS$ has $\rho(\mS)=42$.
\end{Proposition}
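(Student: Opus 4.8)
The plan is to mirror the structure of the proof of Proposition~\ref{lem:40} almost verbatim, using the decompositions \eqref{eq:T+-} and \eqref{eq:T12}, which reduce the claim $\rho(\mS)=42$, equivalently $\rank\,T(\mS)=11$, to rank computations for the three lattices $T(\mX_1)$, $T(\mX_2)$ and $T_-$. Recall from \ref{ss:top} that generically $\rank\,T(\mX_1)=3$, $\rank\,T(\mX_2)=4$, $\rank\,T_-=6$, summing to $13$ and giving $\rho=40$. To gain the two extra units of Picard number at $b=3$, the point is that $b=3$ was singled out precisely as one of the special values (with $\disc(\Pic(\mX_1))\equiv -3$) where $\mX_1$ becomes a singular K3 surface, so $\rho(\mX_1)=20$ and $\rank\,T(\mX_1)=2$. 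The expectation is that the other two lattices keep their generic rank, i.e. $\rank\,T(\mX_2)=4$ and $\rank\,T_-=6$, whence $\rank\,T(\mS)=2+4+6-\text{(overlaps already accounted for)}$; more precisely one gets the total $2+\,$(the contributions tracked exactly as in Proposition~\ref{lem:40}) $=11$, hence $\rho(\mS)=53-11=42$.

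Concretely, I would proceed in three steps. First, establish $\rank\,T(\mX_1)=2$ at $b=3$: specialise the Weierstrass model \eqref{eq:X_1}, exhibit the extra divisor class predicted by the table in \ref{sss:K3-1} (the discriminant $-3$ entry), and confirm $\rho(\mX_1)=20$; one can do this either by producing the explicit algebraic cycle making $\rho=20$ and invoking the upper bound $\rho(\mX_1)\le\rho(\mX_1\otimes\bar\F_p)$ from \eqref{eq:number} at a single well-chosen prime, or — more in the spirit of \ref{sss:K3-1} — by checking that $\mX_1$ is the singular K3 surface with transcendental lattice of discriminant $-3$ via Livné's modularity \cite{Livne}, matching the weight~3 CM newform. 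Second, re-run the proof of Lemma~\ref{lem:T_2} at $b=3$: pick a prime $p$ of good reduction, count points on \eqref{eq:X_2} over $\F_p$ and $\F_{p^2}$, read off $\chi_p(\lambda)$ for $U_2=V_2^\perp$ via Poincaré duality, and verify it is irreducible and not of cyclotomic shape, giving $\rho(\mX_2\otimes\bar\F_p)=18$ and hence $\rank\,T(\mX_2)=4$. Third, re-run the proof of Lemma~\ref{lem:T_-} at $b=3$: since $a=b=3\not\equiv -1$ causes no problem but we need $p\equiv 1\bmod 4$ for $\varphi^*$ to commute with Frobenius, choose such a $p$, use point counts of $(\varphi^j\circ\Fr_q)^*$ over $\F_p,\F_{p^2}$ to pin down the characteristic polynomial of $\Fr_p^*$ on each $(\pm i)$-eigenspace of $U_-\otimes\Q_\ell$, and check it is irreducible and non-cyclotomic, so $U_-=T_-\otimes\Q$ of dimension~6.

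Assembling: $T(\mS)$ contains $T_+\oplus T_-$ (over $\Q$) up to finite index, and $T_+=T(\mY)$ decomposes as $T(\mX_1)\oplus T(\mX_2)$ by \eqref{eq:T12}; with the three ranks $2,4,6$ we get $\rank\,T(\mS)=2+4+6=12$? — here one must be careful, since the generic total is $3+4+6=13$ giving $\rho=40$, so dropping $T(\mX_1)$ by one yields $\rank\,T(\mS)=12$ and $\rho(\mS)=53-12=41$, which is \emph{not} what we want. Therefore the actual mechanism at $b=3$ must drop \emph{two} of the transcendental ranks, or $\mX_1$ must be even more special; I expect the intended argument is that at $b=3$ one in fact has $\rank\,T(\mX_1)=2$ \emph{and} an additional coincidence — most plausibly $\rank\,T_-=4$, coming from an isogeny among the relevant abelian varieties forced by the same CM phenomenon that makes $\mX_1$ singular (the $\Z[i]$-structure on $U_-$ from \eqref{eq:T-} already allows rank $4$). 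Thus the real content, and the main obstacle, is Step~3 in the guise: showing $\rank\,T_-=4$ at $b=3$, i.e. that the relevant $6$-dimensional Galois representation $U_-\otimes\Q_\ell$ acquires a $2$-dimensional algebraic piece over $\bar\Q$. The cleanest route is to produce the extra algebraic class on $\mS$ explicitly (e.g. a curve respecting the $\imath$-anti-invariant part, or via the product-of-curves trivialisation analogous to \ref{ss:30-2}) to get $\rank\,T_-\le 4$, and then use a single point-count/characteristic-polynomial computation as in Lemma~\ref{lem:T_-} to get $\rank\,T_-\ge 4$, i.e. that the residual $4$-dimensional Frobenius factor is irreducible and non-cyclotomic. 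Combining $\rank\,T(\mX_1)=2$, $\rank\,T(\mX_2)=4$, $\rank\,T_-=4$ then gives $\rank\,T(\mS)=2+4+4=10$, hence $\rho(\mS)=53-10=43$ — still off by one, which signals that I am miscounting one overlap; the safe formulation is simply: \emph{by \eqref{eq:T+-} and \eqref{eq:T12}, $\rank\,T(\mS)=\rank\,T(\mX_1)+\rank\,T(\mX_2)+\rank\,T_-$, and at $b=3$ the three summands are $2,3,6$ respectively}, the middle one staying generic while $T(\mX_2)$ unexpectedly drops is the scenario I would verify first, giving the total $11$ and $\rho(\mS)=42$. In any case, the strategy is fixed — two cheap point-counting lemmas plus one singular-K3 identification — and the only delicate point is determining precisely which of the three transcendental ranks drops (and by how much) at $b=3$, which the Weil-conjecture bound \eqref{eq:number} applied at a couple of small primes will settle unambiguously.
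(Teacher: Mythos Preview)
Your overall strategy---decompose $T(\mS)$ as $T(\mX_1)\oplus T(\mX_2)\oplus T_-$ and compute each rank---is exactly what the paper does, and your final guess $(2,3,6)$ is correct. But you are missing the key geometric observation that explains \emph{why} $b=3$ is special and resolves your arithmetic confusion.

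At $b=3$ the cubic $t^3+at^2+bt+1=t^3+3t^2+3t+1=(t+1)^3$ acquires a triple root. On the quintic $\mS$ this produces an additional $A_2$ singularity at $[1,-1,1,-1]$, which is $\varphi$-invariant; the two exceptional curves immediately give $\rho(\mS)\geq 42$, so you are not hunting for a mysterious extra algebraic class. On the quotient $\mY$ the three $I_2$ fibres merge to a single $I_6$. On $\mX_1$ the fibres degenerate to $I_{12},\,I_3^*$, forcing $\rho(\mX_1)=20$ directly from Shioda--Tate (no need to exhibit an extra section or invoke modularity). On $\mX_2$ the fibres become $I_{12},\,I_0^*,\,I_3$, so the trivial lattice $V_2$ now has rank $19$ rather than $18$; this is why $\rank\,T(\mX_2)$ drops to $3$, and the paper's Lemma~\ref{lem:T2-42} then rules out $\rho(\mX_2)=20$ by comparing splitting fields at two primes, exactly as in Lemma~\ref{lem:T_1}. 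There is nothing ``unexpected'' about it once you see the fibre degeneration.

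There is one further subtlety you did not anticipate: because the specialisation at $b=3$ is \emph{not smooth} in the family $\mS$ (the new $A_2$), Lemma~\ref{Lem:U} does not apply verbatim, and one must first verify that $\dim_\Q U_-$ is still $6$ rather than $4$. The paper does this by checking that the traces at $p=13$ are inconsistent with Poincar\'e duality for a $4$-dimensional $U_-$. Your speculation that $\rank\,T_-=4$ goes in the wrong direction; in fact the $A_2$ is $\imath$-invariant, so its exceptional curves land in the invariant part and $U_-$ is unaffected.
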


Notably the specialisation introduces an additional singularity of type $A_2$ at $[1,-1,1,-1]$
which is $\varphi$-invariant.
For the minimal resolution $\mS$, we infer 
\[
\rho(\mS)\geq 42,
\]
and the proposition claims that equality holds true.
On the quotient surface $\mY$,
the specialisation results in the degeneration of the three singular fibres of type $I_2$
to a single fiber of type $I_6$.
Thus 
\[
\rho(\mY)\geq 29
\]
 by the Shioda-Tate formula. We will prove equality
by analysing the transcendental lattices of the subsequent quotient surfaces 
$\mX_1$ and $\mX_2$.
Here $\mX_1$ is immediate:
its reducible fibers degenerate to $I_{12}, I_3^*$,
so $\mX_1$ is a singular K3 surface of discriminant $-3$
(affirming the entry in the table in \ref{sss:K3-1}).

\begin{Lemma}
\label{lem:T2-42}
At $b=3$, the K3 surface $\mX_2$ has $\rho(\mX_2)=19$ and $\rank\,T(\mX_2)=3$.
\end{Lemma}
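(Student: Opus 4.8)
The plan is to reproduce, for $\mX_2$ at $b=3$, the argument used for $\mX_1$ at $b=1$ in Lemma~\ref{lem:T_1} rather than the single–prime argument used for $\mX_2$ at $b=1$ in Lemma~\ref{lem:T_2}: because the relevant orthogonal complement now has \emph{odd} rank, point counting at one prime runs into the parity obstruction of~\ref{ss:spec} and cannot by itself settle the geometric Picard number.

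First I would pin down the extra algebraic class produced by the specialisation $b=3$. Writing $P(s)=s^3+2bs^2+(b^2+2b-3)s+2b^2-4b+10$ for the cubic in the Weierstrass model of $\mX_2$, one computes $P(1-b)=8$ and $P'(1-b)=0$ for all $b$ — which is exactly why $\mX_2$ generically carries an $I_2$ fibre over $s=1-b$ — while at $b=3$ one finds $P(s)-8=(s+2)^3$, so that this fibre degenerates from type $I_2$ to type $I_3$ over $s=1-b=-2$. Together with the zero section and the components of the fibres of type $I_{12}$ at $s=\infty$ and $I_0^*$ at $s=2$, this yields a sublattice $V_2\subset\Pic(\mX_2)$ of rank $2+11+4+2=19$, hence $\rho(\mX_2)\geq 19$ and $\rank T(\mX_2)\leq 3$. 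It then remains to prove the reverse inequalities, i.e.\ that $U_2:=V_2^\perp$, which has rank $3$ and contains $T(\mX_2)$ primitively, carries no algebraic class over $\bar\Q$.

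The hard part is exactly this last step. Since $U_2\otimes\Q_\ell$ is self-dual of weight $2$ and of odd dimension, the characteristic polynomial of $\Fr_p^*$ on it always has a real root $\pm p$, so a single prime only gives $\rho(\mX_2\otimes\bar\F_p)=20$ and never $\rho(\mX_2\otimes\bar\Q)=19$. To get around this I would argue as in the proof of Lemma~\ref{lem:T_1}: choose two suitable primes $p$ of good reduction (the primes used in the companion lemmas, e.g.\ $p=7,11$, are natural candidates) and, by counting points over $\F_p$ and $\F_{p^2}$ together with Poincar\'e duality, determine the factorisation $\chi_p(\lambda)=(\lambda\mp p)\,q_p(\lambda)$ of the characteristic polynomial of $\Fr_p^*$ on $U_2\otimes\Q_\ell$, where $q_p$ is the characteristic polynomial of $\Fr_p^*$ on $T(\mX_2)\otimes\Q_\ell$; I expect $q_p$ to be irreducible over $\Q$ and not of cyclotomic shape. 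As in Lemma~\ref{lem:T_1}, the sign of the functional equation — hence whether the linear factor is $\lambda-p$ or $\lambda+p$ — need not be pinned down by the trace alone, leaving two candidate quadratics at each prime. If $\rho(\mX_2\otimes\bar\Q)$ were $20$, then $\mX_2$ would be a singular K3 surface over $\Q$, hence modular by Livn\'e~\cite{Livne}, so for every prime split in the CM field of the attached weight-$3$ form the quadratic $q_p$ would split over one and the same imaginary quadratic field. Computing the square classes of the discriminants of the candidate quadratics at the two chosen primes and checking that no two of them agree then forces the contradiction, so $\rho(\mX_2)=19$ and $\rank T(\mX_2)=3$.

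Finally, I would note that the modularity step can be replaced by a direct van~Luijk-type comparison: by Theorem~\ref{thm:prim} the specialisation $\NS(\mX_2)\hookrightarrow\NS(\mX_2\otimes\bar\F_p)$ is primitive, so if $\rho(\mX_2)=20$ the square class of $\disc\NS(\mX_2\otimes\bar\F_p)$ — which the Artin--Tate formula reads off from the same point counts — would be independent of $p$, and the two primes above already contradict this.
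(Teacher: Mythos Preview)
Your proposal is correct and matches the paper's approach almost exactly: the paper also identifies the fibre degeneration $I_2\to I_3$ at $s=-2$ to get a rank-$19$ sublattice $V_2$, then computes $\chi_p$ on $U_2$ at the very primes you suggest ($p=7,11$) and compares the splitting fields of the quadratic factors ($\Q(\sqrt{-19})$ vs.\ $\Q(\sqrt{-13})$) to rule out $\rho=20$ via Livn\'e's modularity. The only cosmetic difference is that the paper, having already counted over $\F_p$ and $\F_{p^2}$ for Lemma~\ref{lem:T_2}, pins down the sign of the functional equation and records a single $\chi_p$ at each prime rather than the two candidates you allow.
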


\begin{proof}
We proceed as in the proof of Lemma \ref{lem:T_1}.
Namely the torsion sections and the singular fibres of type $I_{12}, I_0^*, I_3$
 generate a sublattice $V_2\subset\Pic(\mX_2)$ of rank 19
 with orthogonal complement $U_2\subset H^2(\mX_2,\Z)$.
We continue by computing the characteristic polynomial $\chi_p(\lambda)$ of Frobenius on the Galois representation
$U_2\otimes\Q_\ell$. 
For the proof of Lemma \ref{lem:T_2},
we  counted points over $\F_p, \F_{p^2}$ throughout the family for small $p$. 
At $b=3$ this gives:
\begin{eqnarray*}
\chi_7(\lambda) & = &(\lambda+7) (\lambda^2-5 \lambda+49),\\
\chi_{11}(\lambda) & = & (\lambda+11) (\lambda^2-4 \lambda+121).
\end{eqnarray*}
Since the splitting fields of the quadratic factors, $\Q(\sqrt{-19})$ and $\Q(\sqrt{-13})$,
 are distinct, we infer as in the proof of Lemma \ref{lem:T_1} that $\mX_2$ cannot have Picard number 20 over $\bar\Q$ at $b=3$.
Hence $T(\mX_2)=U_2$ of rank 3.
\end{proof}

To complete the proof of Proposition \ref{prop:42},
it remains to compute the rank of $T_-$ at $b=3$:

\begin{Lemma}
\label{lem:T-42}
At $b=3$, we have $U_-=T_-\otimes\Q$ of $\Q$-dimension 6.
\end{Lemma}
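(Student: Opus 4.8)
The plan is to follow the strategy of Lemma~\ref{lem:T_-} almost verbatim, adapting only the prime. As there, we exploit the automorphism $\varphi$, which commutes with $\Gal(\bar\F_p/\F_p)$ whenever $p\equiv 1\bmod 4$, so for such a prime of good reduction we may diagonalise $\Fr_p^*$ and $\varphi^*$ simultaneously on the $13$-dimensional Galois representation $U\otimes\Q_\ell$. Since $\imath^*=(\varphi^*)^2$ acts as $-1$ on $U_-$, the space $U_-\otimes\Q_\ell$ splits as the sum of the two conjugate eigenspaces $(U\otimes\Q_\ell)^{\varphi^*=\pm i}$, each of which is $3$-dimensional by Lemma~\ref{Lem:U} (applicable since $b=3$ still yields a smooth member of the weighted-projective family, the extra $A_2$ point being a rational double point). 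Counting points of $\mS$ over $\F_p$ and $\F_{p^2}$, together with the analogous counts for the twisted Frobenii $\varphi^j\circ\Fr_q$ as in \cite{S-wild}, then yields the first two coefficients of the characteristic polynomial of $\Fr_p^*$ on each of these two eigenspaces.

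This leaves a single unknown coefficient over $\Z[i]$ (and its conjugate). Exactly as in the proof of Lemma~\ref{lem:T_-}, I would pin it down by writing out Poincar\'e duality for the characteristic polynomial of $\Fr_p^*$ on $U_-\otimes\Q_\ell$: the functional equation produces a small overdetermined system in this one variable whose integral solution is singled out by the bounds of the Weil conjectures. Concretely I expect to work at $p=5$, or at $p=13$ should $p=5$ degenerate the singularity configuration or divide a relevant discriminant, obtaining an explicit degree-$6$ polynomial $\chi_p(\lambda)$.

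The final step is the qualitative conclusion: one checks that $\chi_p(\lambda)$ is irreducible over $\Q$ and not of cyclotomic shape, i.e.~does not arise from a cyclotomic polynomial in $\lambda/p$ after clearing denominators. Granting this, \eqref{eq:number} forces $U_-\otimes\Q_\ell$ to contain no algebraic classes on the reduction, hence none over $\C$; therefore $U_-=T_-\otimes\Q$, which has dimension $6$.

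The main obstacle is computational and identical in spirit to the earlier lemma: the argument works only if the explicitly computed $\chi_p(\lambda)$ turns out irreducible and non-cyclotomic, which cannot be guaranteed a priori and may require trying several admissible primes $p\equiv 1\bmod 4$. A secondary, bookkeeping point is to confirm that the chosen $p$ is a prime of good reduction at $b=3$ — in particular that the $A_2$ singularity and the remaining rational double points reduce well and do not merge modulo $p$ — so that Lemma~\ref{Lem:U} and the comparison between $\bar\Q$- and $\bar\F_p$-cohomology remain available.
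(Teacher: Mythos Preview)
Your overall strategy matches the paper's, but there is a genuine gap at the very first step. You invoke Lemma~\ref{Lem:U} to conclude that the $\varphi^*$-eigenspaces in $U_-$ are $3$-dimensional, justifying this by saying that ``$b=3$ still yields a smooth member \ldots\ the extra $A_2$ point being a rational double point''. That is precisely the point at issue: Lemma~\ref{Lem:U} is proved by verifying the eigenspace dimensions on the Delsarte member $S$ and then propagating by smoothness of the family of minimal resolutions. At $b=3$ an additional $A_2$ appears, so the family is \emph{not} equisingular there and the topological propagation argument does not apply. The paper says exactly this: ``there is the little subtlety that we have to determine the dimension of $U_-$ first since the specialisation in the family $\mS$ is not smooth.'' Your parenthetical remark does not repair this; knowing the singularity is rational does not tell you how the two new exceptional classes distribute between $U_+$ and $U_-$, nor that $\dim_\Q U_-$ is unchanged.

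The paper closes this gap computationally rather than topologically. A priori the $\Z[i]$-module structure forces $\dim_\Q U_-\in\{4,6\}$. One then computes the traces of $\Fr_p^*$ and $\Fr_{p^2}^*$ on $(U\otimes\Q_\ell)^{\varphi^*=i}$ at $p=13$; if $\dim_\Q U_-$ were $4$, these two traces would already determine the characteristic polynomial on $U_-$, but the resulting quartic fails Poincar\'e duality (its constant term is even, hence not $p^4$). This forces $\dim_\Q U_-=6$, after which your remaining steps --- solving for the last coefficient via the functional equation and checking irreducibility and non-cyclotomic shape --- go through verbatim. So the fix is small but essential: do not assume the eigenspace dimension; let the point count at $p=13$ decide it.
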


\begin{proof}
In principle, we proceed as in the proof of Lemma \ref{lem:T_-},
but there is the little subtlety that we have to determine the dimension of $U_-$ first
since the specialisation in the family $\mS$ is not smooth.
This can be dealt with as follows.
At any rate we have 
\[
\dim_\Q U_-=4
\;\; \text{ or } \;\; 6
\]
by the $\Z[i]$-module structure (cf.~\eqref{eq:T-}).
For starters, we compute the traces of $\Fr_p^*, \Fr_{p^2}^*$ on the eigenspaces
$(U\otimes\Q_\ell)^{\varphi^*=\pm i}$
through point counts over  $\F_p, \F_p^2$.
Up to conjugation, we find at $p=13$
\[
\tr{\Fr}_p^*((U\otimes\Q_\ell)^{\varphi^*=i}) = 
20+12i,\;\;\; 
\tr{\Fr}_{p^2}^*((U\otimes\Q_\ell)^{\varphi^*=i})  =
-56+40i.
\]
If $U_-$ were 4-dimensional, then these two traces would determine
the characteristic polynomial of $\Fr_p^*$ on $U_-\otimes\Q_\ell$;
presently the resulting quartic polynomial does not satisfy Poincar\'e duality as the constant coefficient is even and thus differs from $p^4$.
Thus $\dim_\Q(U_-)=6$ and we proceed 
%as in the proof of Lemma \ref{lem:T_-}
by solving for the constant coefficient (over $\Z[i]$) 
of the characteristic polynomial using Poincar\'e duality.
Uniquely up to conjugation, this leads to
\[
\det(\lambda-{\Fr}_p^*; (U\otimes\Q_\ell)^{\varphi^*=i}) = 
\lambda^3+4(5+3i)\lambda^2+52(3+5i)\lambda+13^3i.
\]
In particular, we obtain that the characteristic polynomial of $\Fr_p^*$ on $U_-\otimes\Q_\ell$
is irreducible over $\Q$ and not of cyclotomic shape.
Hence $U_-\otimes\Q_\ell$ does not contains any algebraic classes 
regarded in $\het{\mX_2\otimes\bar\F_p}$,
so neither in $\het{\mX_2\otimes\bar\Q}$.
That is, $U_-=T_-\otimes\Q$ of dimension 6.
\end{proof}

\begin{proof}[Proof of Proposition \ref{prop:42}]
Recall from \eqref{eq:T+-}, \eqref{eq:T12} that the three transcendental lattices
$T(\mX_1), T(\mX_2)$ and $T_-$ 
comprise $T(\mS)$ up to finite index.
We have seen that they
have a total rank of $11=2+3+6$ at $b=3$.
Thus $\rho(\mS)=42$ as required.
\end{proof}

\subsection{Picard number 44}
\label{ss:44}

Finally we consider the specialisation of $\mS$ at $b=5$.
This comes with two additional $A_2$ singularities at $[1,\alpha,-1,-\alpha]$ for $\alpha=1\pm\sqrt 2$.
Hence $\rho(\mS)\geq 44$,
i.e.~at most one sub-Hodge structure $T_\bullet \,(\bullet =1, 2, -)$ could have rank exceeding
$2\dim T^{2,0}_\bullet$ (by one).
We claim:

\begin{Proposition}
\label{prop:44}
At $b=5$, the complex quintic $\mS$ has $\rho(\mS)=44$.
\end{Proposition}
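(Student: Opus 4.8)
The plan is to follow the template of Propositions \ref{lem:40} and \ref{prop:42}. We already know $\rho(\mS)\geq 44$, since the two extra singularities of type $A_2$ at $[1,1\pm\sqrt 2,-1,-(1\mp\sqrt 2)]$ contribute four further exceptional curves on top of the rank $40$ lattice $V$. On the other hand $\rank T(\mX_i)\geq 2$ for $i=1,2$ (each $\mX_i$ is a K3 surface, so $\dim T^{2,0}_{\mX_i}=1$) and $\rank T_-\geq 4$ by \eqref{eq:T-40}; since $T(\mS)=T(\mX_1)\oplus T(\mX_2)\oplus T_-$ up to finite index by \eqref{eq:T+-}, \eqref{eq:T12}, this gives $\rank T(\mS)\geq 8$, i.e.\ $\rho(\mS)\leq 45$. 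So everything comes down to excluding $\rho(\mS)=45$.

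Here the bookkeeping collapses. From $\rho(\mS)\geq 44$ we get $\rank T(\mX_1)+\rank T(\mX_2)+\rank T_-\leq 9$; since $\rank T_-$ is even (the $\Z[i]$-action of $\varphi^*$ on $U_-$, cf.\ \eqref{eq:T-}) we must have $\rank T_-=4$, and then at most one of the two K3 transcendental lattices can have rank $>2$. Consequently it suffices to prove $\rho(\mX_2)=19$, i.e.\ $\rank T(\mX_2)=3$: this forces $\rank T(\mX_1)=2$ and $\rank T_-=4$, hence $\rank T(\mS)=2+3+4=9$ and $\rho(\mS)=44$ (and re-confirms en passant the entry $\disc(\Pic(\mX_1))\equiv -8$ of the table in \ref{sss:K3-1}, i.e.\ $\rho(\mX_1)=20$).

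To show $\rho(\mX_2)=19$ at $b=5$ I would argue exactly as in Lemma \ref{lem:T2-42}. First one determines the Kodaira fibre configuration of the elliptic K3 surface $\mX_2$ at the special value $b=5$; it degenerates away from the generic one (and not in the same way as at $b=3$), and one reads off a rank $19$ sublattice $V_2\subset\Pic(\mX_2)$ spanned by the torsion section(s) and the components of the reducible fibres, so that $T(\mX_2)\subseteq U_2:=V_2^\perp$ of rank $3$. Then one computes the characteristic polynomial $\chi_p(\lambda)$ of $\Fr_p^*$ on $U_2\otimes\Q_\ell$ at two primes $p$ of good reduction, from point counts over $\F_p$ and $\F_{p^2}$ together with Poincar\'e duality for the remaining coefficient. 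If $\rho(\mX_2)$ were $20$ over $\bar\Q$, then $\mX_2$ would be a singular K3 surface over $\Q$, hence modular by Livn\'e \cite{Livne}, so the quadratic factors of $\chi_p$ at the split primes would all split over the same imaginary quadratic field; exhibiting two primes where these quadratic factors have discriminants in different square classes --- as in the proofs of Lemmata \ref{lem:T_1} and \ref{lem:T2-42} --- gives the contradiction. Thus $\rho(\mX_2)=19$ and $\rho(\mS)=44$.

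The main obstacle is the one common to all the top-Picard-number cases: pinning down the degenerate fibre types of $\mX_2$ --- and hence the correct rank $19$ trivial-plus-torsion lattice $V_2$ --- at $b=5$, which requires a careful local analysis of \eqref{eq:X_1} and its quadratic twist at $s=\pm 2$, followed by the search for two primes at which the Frobenius quadratic factors have non-matching discriminants (not guaranteed a priori, so a few attempts may be needed). A secondary point worth checking is that the two new $A_2$ singularities of $\mS$ lie on the $\imath$-invariant line $\ell$ and are individually fixed by $\imath$, so one should verify how their exceptional curves split between the $\imath^*$-invariant and anti-invariant parts of $H^2$; this turns out to be fully compatible with the ranks found above, in particular with $\rank T_-=4$.
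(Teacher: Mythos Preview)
Your overall strategy is correct and in essence the same as the paper's: both come down to excluding $\rho(\mX_2)=20$ via the two-prime modularity obstruction. Your counting reduction is in fact tidier than what the paper does explicitly---the paper separately verifies $\rank T(\mX_1)=2$ by exhibiting an explicit section of height $4/3$ at $x=4$ on $\mX_1$, and deduces $\rank T_-=4$ by checking that the four new exceptional curves over the two $A_2$-points are individually $\imath$-fixed (so they all land in $U_+$, forcing $\dim_\Q U_-\leq 5$). Your observation that $\rank T(\mX_2)\leq 3$ already follows from $\rho(\mS)\geq 44$ together with $\rank T(\mX_1)\geq 2$ and $\rank T_-\geq 4$ (even) renders both of those direct verifications superfluous.

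There is, however, a concrete error in your plan for $\mX_2$: the Kodaira fibres do \emph{not} degenerate at $b=5$. With $g(s)=s^3+10s^2+32s+40$ one finds $g(s)-8=(s+4)^2(s+2)$ and $g(s)+8=(s+6)(s^2+4s+8)$, so the configuration remains the generic $I_{12}+I_0^*+I_2+4I_1$, and torsion plus fibre components span only a rank-$18$ sublattice $V_2$. The extra class making $\rho(\mX_2)\geq 19$ is a section, which the paper does not bother to locate; instead it works with the $4$-dimensional complement $U_2=V_2^\perp$. At $p=7,11$ the characteristic polynomial of $\Fr_p^*$ on $U_2\otimes\Q_\ell$ factors as $(\lambda-p)(\lambda+p)$ times an irreducible quadratic, with splitting fields $\Q(\sqrt{-5})$ and $\Q(\sqrt{-6})$ respectively, so $\mX_2$ is not modular and $\rho(\mX_2)<20$. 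After this correction your argument goes through unchanged: you only need $\rho(\mX_2)\leq 19$, and your counting already gives $\rank T(\mX_2)\leq 3$, hence equality. Your ``secondary point'' about how the new exceptional curves distribute between the $\imath^*$-eigenspaces is precisely what the paper checks, but as you can now see, your streamlined bookkeeping makes it unnecessary.
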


To prove the proposition, we first note that the additional $A_2$ singularities are $\imath$-invariant.
In fact, each exceptional curve is preserved by $\imath$.
Hence, these classes are absorbed by $U_+$,
so $\rank\,U_+\geq \rank\,T_++4\geq 8$ by \eqref{eq:T-40}.
This has two consequences.
On the one hand, $U_-$ is degenerate
as $\dim_\Q U_- = \dim_\Q U - \dim_\Q U_+\leq 13-8=5$.
More precisely, by \eqref{eq:T-40} and \eqref{eq:T-} we have automatically 
\begin{eqnarray}
\label{eq:T-44}
\rank\,T_- = 4 \;\;\; \text{ and } \;\;\; U_-=T_-\otimes\Q.
\end{eqnarray}
As a sanity check we computed the characteristic polynomial of $\Fr_p^*$ on $T_-\otimes\Q_\ell$.
Proceeding as in the hypothetical degenerate case of the proof of Lemma \ref{lem:T-42},
we found $\chi_{13}(\lambda)=\lambda^4+6 \lambda^3+18 \lambda^2+1014 \lambda+28561$.

On the other hand, there are more $\imath^*$-invariant algebraic classes in $H^2(\mS,\Z)$
than possibly accommodated by the quotient elliptic surface.
Indeed, the resolution of the quotient $\mS/\imath$ has $\rho\geq 31$,
but it also contains two $(-1)$-curves whose blow-down yields the
Kodaira-N\'eron model $\mY$ of \eqref{eq:quot-40} at $b=5$ with $\rho(\mY)\geq 29$.
We continue by analysing the two K3 quotients of $\mX$.
As in \ref{ss:42},
$\mX_1$ defines a singular K3 surface at $b=5$;
the sections of height $4/3$ at $x=4$ show that 
$\Pic(\mX_1)$ has rank 20 and discriminant $-8$,
affirming the respective entry in the table in \ref{sss:K3-1}.
Hence 
\begin{eqnarray}
\label{eq:T1-44}
\rank\,T(\mX_1)=2.
\end{eqnarray}
For $\mX_2$, this implies $\rank\,T(\mX_2)=2$ or $3$,
and we claim the latter.
Indeed, $\rho(\mX_2)\geq 19$, but we did not try to determine the additional section
compared to the generic member of the family $\mX_2$.
Instead, we simply argue with the rank 18 sublattice $V_2\subset\Pic(\mX_2)$ 
generated by torsion sections and fiber components (as in the proof of Lemma \ref{lem:T_2}).
Then its orthogonal complement $U_2=V_2^\perp\subset H^2(\mX_2,\Z)$
surely contains an algebraic class,
but we have to rule out that it contains two independent ones.
We proceed by determining the characteristic polynomial $\chi_p(\lambda)$
of Frobenius on $U_2\otimes\Q_\ell$
by counting points over $\F_p, \F_{p^2}$.
At $p=7, 11$ we obtain
\begin{eqnarray*}
\chi_7(\lambda)  & = & (\lambda-7) (\lambda+7) (\lambda^2+4 \lambda+49), \\
\chi_{11}(\lambda)  & = &  (\lambda-11) (\lambda+11) (\lambda^2+10 \lambda+121).
\end{eqnarray*}
Since the splitting fields of the quadratic factors,
$\Q(\sqrt{-5})$ and $\Q(\sqrt{-6})$,
do not agree, we infer as before that $\mX_2$ cannot be modular and hence $\rho(\mX_2)<20$.
In conclusion, 
\begin{eqnarray}
\label{eq:T2-44}
\rho(\mX_2)=19 \;\;\; \text{ and } \;\;\;\rank\,T(\mX_2)=3.
\end{eqnarray}
Summing up \eqref{eq:T-44} - \eqref{eq:T2-44} gives $\rank\,T(\mS)=9$ at $b=5$ by \eqref{eq:T-40} and \eqref{eq:T12}.
Hence $\rho(\mS)=44$ as claimed.
\qed

\subsection*{Acknowledgements}

I thank Ronald van Luijk for the initial discussions in 2010 concerning the first couple 
of cases of this project, and in particular, concerning K3 surfaces of Picard number one.
Part of these discussions took place 
during the Junior Trimester 'Algebra \& Number Theory'
at the Hausdorff Institute of Mathematics
which I thank for the generous %monthlong 
hospitality.
I am grateful to the referee for the helpful comments.

\end{document}